\newcommand{\cO}{\mathcal{O}}
\newcommand{\EE}{\mathbb{E}}
\newcommand{\RR}{\mathbb{R}}
\newcommand{\xb}{\mathbf{x}}
\newcommand{\zb}{\mathbf{z}}
\newcommand{\wb}{\mathbf{w}}
\newcommand{\bbx}{\bar{x}}
\newcommand{\vb}{\mathbf{v}}
\newcommand{\bs}{\mathbf{s}}
\newcommand{\bbs}{\bar{s}}
\newcommand{\Ab}{\mathbf{A}}
\newcommand{\argmin}{\mathop{\mathrm{argmin}}}
\newcommand{\norm}[1]{\left\|#1\right\|}
\newcommand{\dotprod}[1]{\left\langle #1\right\rangle}
\newcommand{\proximal}{\textbf{prox}}
\newtheorem{lemma}{Lemma}
\newtheorem{theorem}{Theorem}
\newtheorem{remark}{Remark}
\newtheorem{assumption}{Assumption}
\let\NAT@parse\undefined
\let\NAT@parse\undefined
\begin{document}
%
% paper title
% Titles are generally capitalized except for words such as a, an, and, as,
% at, but, by, for, in, nor, of, on, or, the, to and up, which are usually
% not capitalized unless they are the first or last word of the title.
% Linebreaks \\ can be used within to get better formatting as desired.
% Do not put math or special symbols in the title.
\title{PMGT-VR: A decentralized proximal-gradient algorithmic framework with variance reduction}
%
%
% author names and IEEE memberships
% note positions of commas and nonbreaking spaces ( ~ ) LaTeX will not break
% a structure at a ~ so this keeps an author's name from being broken across
% two lines.
% use \thanks{} to gain access to the first footnote area
% a separate \thanks must be used for each paragraph as LaTeX2e's \thanks
% was not built to handle multiple paragraphs
%
%
%\IEEEcompsocitemizethanks is a special \thanks that produces the bulleted
% lists the Computer Society journals use for "first footnote" author
% affiliations. Use \IEEEcompsocthanksitem which works much like \item
% for each affiliation group. When not in compsoc mode,
% \IEEEcompsocitemizethanks becomes like \thanks and
% \IEEEcompsocthanksitem becomes a line break with idention. This
% facilitates dual compilation, although admittedly the differences in the
% desired content of \author between the different types of papers makes a
% one-size-fits-all approach a daunting prospect. For instance, compsoc 
% journal papers have the author affiliations above the "Manuscript
% received ..."  text while in non-compsoc journals this is reversed. Sigh.

\author{Haishan~Ye*,
        Wei~Xiong*,
        and~Tong~Zhang,~\IEEEmembership{Fellow,~IEEE}% <-this % stops a space
\IEEEcompsocitemizethanks{
\IEEEcompsocthanksitem Equal Contribution.
\IEEEcompsocthanksitem Haishan Ye is with Shenzhen Research Institute of Big Data; The Chinese University of Hong Kong, Shenzhen, email: hsye cs@outlook.com.
% note need leading \protect in front of \\ to get a newline within \thanks as
% \\ is fragile and will error, could use \hfil\break instead.
\IEEEcompsocthanksitem Wei Xiong is with Department of Probability and Statistics; University of Science and Technology of China, email:weixiong5237@gmail.com.
\IEEEcompsocthanksitem Tong Zhang is with Hong Kong University of Science and Technology, email: tongzhang@ust.hk.}

%\IEEE
%\thanks{Manuscript received April 19, 2005; revised August 26, 2015.}
}

\IEEEtitleabstractindextext{%
\begin{abstract}
This paper considers the decentralized composite optimization problem. We propose a novel decentralized variance reduction proximal-gradient algorithmic framework, called \texttt{PMGT-VR}, which is based on a combination of several techniques including multi-consensus, gradient tracking, and variance reduction. The proposed framework relies on an imitation of centralized algorithms and we demonstrate that algorithms under this framework achieve convergence rates similar to that of their centralized counterparts. We also describe and analyze two representative algorithms,  \texttt{PMGT-SAGA} and \texttt{PMGT-LSVRG}, and compare them to existing state-of-the-art proximal algorithms. To the best of our knowledge, \texttt{PMGT-VR} is the first linearly convergent decentralized stochastic algorithm that can solve decentralized composite optimization problems.  Numerical experiments are provided to demonstrate the effectiveness of the proposed algorithms.
\end{abstract}

% Note that keywords are not normally used for peerreview papers.
\begin{IEEEkeywords}
Decentralized optimization, proximal-gradient, variance reduction.
\end{IEEEkeywords}}

% make the title area
\maketitle

% To allow for easy dual compilation without having to reenter the
% abstract/keywords data, the \IEEEtitleabstractindextext text will
% not be used in maketitle, but will appear (i.e., to be "transported")
% here as \IEEEdisplaynontitleabstractindextext when the compsoc 
% or transmag modes are not selected <OR> if conference mode is selected 
% - because all conference papers position the abstract like regular
% papers do.
\IEEEdisplaynontitleabstractindextext
% \IEEEdisplaynontitleabstractindextext has no effect when using
% compsoc or transmag under a non-conference mode.

% For peer review papers, you can put extra information on the cover
% page as needed:
% \ifCLASSOPTIONpeerreview
% \begin{center} \bfseries EDICS Category: 3-BBND \end{center}
% \fi
%
% For peerreview papers, this IEEEtran command inserts a page break and
% creates the second title. It will be ignored for other modes.
\IEEEpeerreviewmaketitle

\IEEEraisesectionheading{\section{Introduction}\label{sec:introduction}}
% Computer Society journal (but not conference!) papers do something unusual
% with the very first section heading (almost always called "Introduction").
% They place it ABOVE the main text! IEEEtran.cls does not automatically do
% this for you, but you can achieve this effect with the provided
% \IEEEraisesectionheading{} command. Note the need to keep any \label that
% is to refer to the section immediately after \section in the above as
% \IEEEraisesectionheading puts \section within a raised box.

% The very first letter is a 2 line initial drop letter followed
% by the rest of the first word in caps (small caps for compsoc).
% 
% form to use if the first word consists of a single letter:
% \IEEEPARstart{A}{demo} file is ....
% 
% form to use if you need the single drop letter followed by
% normal text (unknown if ever used by the IEEE):
% \IEEEPARstart{A}{}demo file is ....
% 
% Some journals put the first two words in caps:
% \IEEEPARstart{T}{his demo} file is ....
% 
% Here we have the typical use of a "T" for an initial drop letter
% and "HIS" in caps to complete the first word.
\IEEEPARstart{F}{or} modern large-scale optimization problems, the distributed computing architectures based algorithms have recently attracted significant attention and have been studied extensively in machine learning, control, and optimization communities. %Partitioning the whole dataset into multiple subsets and assigning them to multiple local nodes (also referred to as agents) can effectively accelerate the training process and significantly reduce the required storage and computing resources at each single agent. 
There are two main distributed settings that have been widely studied, namely, the master/slave setting and the decentralized setting. In the master/slave setting, a parameter server \cite{parameterserver2014muli} aggregates the local gradients computed by all other agents and performs the update. In the decentralized setting, agents connected by a network are only permitted to communicate with their neighbors to cooperatively solve the optimization problem. We further illustrate them in Figure~\ref{fig:topology}. 

In the context of distributed optimization,
%many algorithms have been developed for the master/slave setting, including centralized gradient descent (\texttt{CGD}), centralized stochastic gradient descent (\texttt{CSGD}), and centralized \texttt{SVRG}. In comparison, 
the decentralized setting has long been treated as a compromise when a centralized topology is unavailable or the decentralization is natural. However, there has been a growing interest in the decentralized setting recently. Several reasons account for this phenomenon: (a) decentralized setting is of a lower communication cost at the busiest agent as this agent only communicates with her neighbors instead of all other agents \cite{dsgd2017lian}; (b) theoretically, several works have shown that we could design decentralized algorithms with a similar convergence rate as compared to their centralized counterparts; and (c) in practical, efficient topologies have been proposed to achieve most communication efficiency and numerical results are provided to give a positive answer to the question whether \textit{decentralized algorithms can be faster than its centralized counterpart} \cite{bluefrog2021, dsgd2017lian}. In this paper, we will focus on the algorithms design.

In the context of first-order methods, with a recently introduced gradient tracking technique, \texttt{GT-DGD} proposed in \cite{qu2017harnessing} and \texttt{GT-DSGD} proposed in \cite{shi2018gtdsgd} (decentralized gradient descent and stochastic gradient descent with gradient tracking) achieve similar convergence rates with \texttt{CGD} and \texttt{CSGD} (centralized \texttt{GD} and \texttt{SGD}), respectively. Efforts have also been made in developing the decentralized versions of variance reduction algorithm. However, existing works including \cite{xin2020variance}, \cite{gt_sarah2020shicong}, and \cite{li2020optimal} either achieve a convergence rate inferior to that of their centralized counterparts or rely on extra assumptions. It remains an open problem whether a decentralized variance reduction algorithm can approach the performance of its centralized counterpart. In addition to the convergence rate, prior linearly convergent decentralized stochastic methods are confined to the smooth optimization problem and the composite optimization problem where the objective function is the sum of a smooth function and a non-smooth but convex one, is substantially less explored. Decentralized proximal algorithms based on full gradients that converge linearly are proposed by \cite{alghunaim2019linearly, sun2019convergence}, and \cite{ye2020decentralized}. On the other hand, to the best of our knowledge, there is no decentralized stochastic algorithm with a linear convergence rate that can solve the composite optimization problem.

In this paper, we consider the decentralized composite optimization problem where agents connected by a network cooperatively minimize a sum of smooth functions, plus a non-smooth and convex one. In practical, the regularized empirical risk minimization problem with data stored across a network is naturally cast as a decentralized composite optimization problem. We propose a novel \textbf{Proximal-gradient} algorithmic framework, called \texttt{PMGT-VR}, which is based on a combination of \textbf{Multi-consensus, Gradient Tracking, and Variance Reduction}. Our study indicates that \texttt{PMGT-VR} methods can solve the composite optimization problem and can achieve a linear convergence rate that matches their centralized counterparts. In particular, two representative algorithms, \texttt{PMGT-SAGA, PMGT-LSVRG} are described and analyzed in detail. Our methodology can be extended to other variance reduction techniques in a similar fashion. 

\begin{figure}[!t]
\centering
\includegraphics[width=3in]{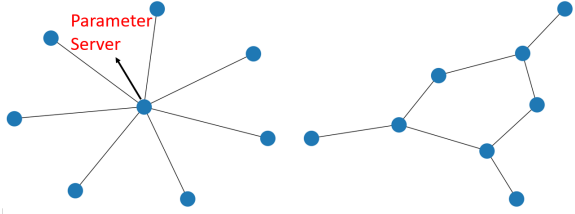}
\caption{The left figure is a master/slave network; the right figure is a decentralized network.}
\label{fig:topology}
\end{figure}

\subsection{Contributions}
We summarize our contributions as follows:
\begin{enumerate}
    \item We propose a novel algorithmic framework aiming at solving the decentralized composite convex problem over a connected and undirected network. A large family of decentralized variance reduction algorithms can be obtained in a similar fashion.
    %\item We demonstrate how local variance reduction techniques can be combined with our framework by two representative \texttt{PMGT-VR} algorithms. A large family of decentralized variance reduction stochastic algorithms can be obtained in a similar fashion. 
    \item We establish the linear convergence rate of \texttt{PMGT-SAGA} and \texttt{PMGT-LSVRG} for strongly convex composite optimization problems. The established rate matches that of centralized \texttt{SAGA} and \texttt{SVRG}. % in terms of iteration complexity/computation complexity.
    \item Many existing works including proximal algorithms proposed by \cite{alghunaim2019linearly} and \cite{sun2019convergence} require that each agent's local loss function is convex. On the other hand, our methods could apply to the sum-of-nonconvex setting where the sum of local loss functions is strongly convex, but local functions can be nonconvex. 
    \item To the best of our knowledge, the proposed \texttt{PMGT-VR} is the first decentralized stochastic proximal method that can achieve a linear convergence rate. 
    %solve decentralized composite optimization.
    %To the best of our knowledge, the \texttt{PMGT-VR} is the first decentralized variance reduction proximal method that can solve decentralized composite optimization.
\end{enumerate}

\subsection{Related work}

We review existing works that are closely related to our framework in this subsection. 
\cite{nedic2009distributed} proposed decentralized subgradient method and its stochastic variant can be found in \cite{ram2010distributed}. 
In the distributed subgradient method, each agent performs
a consensus step and then a subgradient descent with a
diminishing step-size.
However, this kind of algorithms can not achieve a linear convergence even for smooth strongly convex problem \cite{yuan2016convergence}.
To conquer this problem, \texttt{EXTRA} proposed to track differences of gradients achieves the linear convergence rate \cite{shi2015extra}.
After \texttt{EXTRA}, a novel method named gradient tracking was proposed which also tracks differences of gradients  \cite{qu2017harnessing}.
Recently, several gradient-tracking based decentralized algorithms have been proposed to achieve fast convergence rates and efficient communication \cite{li2019nids,li2020revisiting,mudag2020ye,qu2017harnessing,qu2019accelerated,yuan2018exact}.

Decentralized composite optimization is another important research topic and there are many works focusing on it \cite{aybat2017distributed,hong2017prox,shi2015pgextra}. 
However, before the work of \cite{alghunaim2019linearly} and \cite{sun2019convergence}, all decentralized proximal algorithms can only achieve a sublinear convergence rate even though $f(x)$ is smooth strongly convex.
Recently, \cite{Xu2020Unified} and \cite{alghunaim2020decentralized} proposed  unified frameworks to analyze a large group of algorithms, and showed that gradient-tracking based decentralized algorithms can also achieve linear convergence rates with nonsmooth regularization
term such as \texttt{EXTRA} ( \texttt{PG-EXTRA} ) \cite{shi2015pgextra}, \texttt{NIDS} \cite{li2019nids}, and \texttt{Harnessing} \cite{qu2017harnessing}.
\cite{ye2020decentralized} proposed \texttt{DAPG} which is the first accelerated decentralized proximal gradient descent.
\texttt{DAPG} achieved the best-known computation and communication complexities.
Despite intensive studies in the literature, it is still hard to extend a smooth decentralized algorithm to non-smooth composite setting with the same convergence property.
For example, \cite{seaman2017optimal} proposed the communication optimal decentralized algorithm for smooth convex optimization.
However, it is still an open question to design a communication optimal decentralized algorithm for non-smooth composite convex optimization.

When each $f_i(x)$ has the finite-sum form, variance reduction  is an important and effective method in stochastic convex optimization \cite{saga2014Aaron,johnson2013accelerating,schmidt2017minimizing}.
To reduce the computational cost of decentralized optimization, \cite{mokhtari2016dsa} proposed the first decentralized variance reduction method named  \texttt{DSA} which  integrates \texttt{EXTRA} \cite{shi2015extra} with \texttt{SAGA} \cite{saga2014Aaron}.
\texttt{DBSA} tried to use proximal mapping to accelerate \texttt{DSA} \cite{shen2018towards}.
\texttt{AFDS} which combines variance reduction with acceleration technique can achieve the optimal communication complexity for variance reduction based algorithms \cite{hendrikx2019accelerated}.
However, \texttt{DBSA} and  \texttt{AFDS} relied on the assumption that the proximal mapping with respect to $f(x)$ can be solved efficiently.
To conquer the expensive cost of solving proximal mapping, \cite{xin2020variance} brought up \texttt{GT-SAGA} and \texttt{GT-SVRG} which only use the gradient of each individual function.
However, these two algorithms can only achieve much inferior computation and communication complexities  compared with \texttt{DBSA}.
\texttt{DVR} is another proximal mapping free algorithm which achieves better performance than \texttt{GT-SAGA} and \texttt{GT-SVRG} but still inferior to \texttt{DBSA} and  \texttt{AFDS} \cite{hendrikx2020dual}.
Recently, \cite{li2020optimal} proposed an algorithm which can achieve optimal computation and communication complexities. However, the algorithm required that the local loss function $f_j$'s are strongly convex and each $f_{i,j}$ is convex. When the local loss functions $f_j$'s are similar, \cite{gt_sarah2020shicong} demonstrated that gradient tracking and extra averaging are helpful for lower computation and communication complexities. Though many decentralized variance reduction algorithms have been proposed and achieved good performance, it is hard to generalize them to handle decentralized composite optimization. 

\subsection{Paper organization}
In section~\ref{sec:notation}, we introduce the formulation of the decentralized composite optimization problem over a network, notations used in this paper, and several important concepts. In section~\ref{sec:algodevelop}, we discuss the limitation of existing works, motivate our approach, and describe the \texttt{PMGT-VR} framework and two representative algorithms: \texttt{PMGT-SAGA} and \texttt{PMGT-LSVRG} in detail. In section~\ref{sec:result}, we provide the main results of this paper and compare the proposed algorithms with existing state-of-the-art works. In section~\ref{sec:analysis}, we prove the theoretical results of the proposed algorithms. We present numerical simulations in Section~\ref{sec:exp} and conclude in Section~\ref{sec:conclude}. %The detailed proofs are deferred to the Appendices due to space constraints.

\section{Problem Formulation and Preliminaries}\label{sec:notation}
\subsection{Problem Formulation}
We consider the following decentralized composite optimization problem:
\begin{equation}
    \min_{x\in\RR^d} h(x)\triangleq  f(x)+r(x),
\end{equation}
with $f(x) \triangleq \frac{1}{m}\sum_{i=1}^m f_i(x)$ and $f_i(x)\triangleq \frac{1}{n}\sum_{j=1}^{n} f_{i,j}(x)$ where $m$ is the number of agents whose local training sets are of equal size $n$. The function $f_i(x)$ is the local loss function private to agent $i$; $f_{i,j}(x)$ denotes the local loss function at the $j$-th training example of agent $i$; $r(x)$ is a non-smooth and convex regularization term shared by all agents. Note that it has been shown that with algorithms unlimited in the number of decentralized communication steps but limited to one gradient and proximal computations per iteration, it is not possible to achieve a linear convergence rate when agents have different regularization functions by \cite{alghunaim2020decentralized}. We will make the following assumptions for the rest of this paper:
\begin{assumption}\label{ass:lsmooth}
    Each $f_{i,j}:\mathbb{R}^d \to \mathbb{R}$ is $L$-smooth: 
    \begin{equation}
        f_{i,j}(y) - f_{i,j}(x) \leq \langle \nabla f_{i,j}(x),y-x \rangle + \frac{L}{2}\|y-x\|^2.
    \end{equation}
    This implies that 
	\begin{equation}
	\begin{aligned}
	\label{eq:str_cvx}
	&\frac{1}{2L}\norm{\nabla f_{i,j}(x) - \nabla f_{i,j}(y)}^2\\
	\le&
	f_{i,j}(x) - f_{i,j}(y) - \dotprod{\nabla f_{i,j}(y), x-y}, \forall x, y\in\RR^d.
	\end{aligned}
	\end{equation}
\end{assumption}

\begin{assumption}\label{ass:strongcon}
    $f:\mathbb{R}^d \to \mathbb{R}$ is $\mu$-strongly convex:
    \begin{equation}
        f(y) - f(x) \geq \langle \nabla f(x),y-x \rangle + \frac{\mu}{2}\|y-x\|^2,
    \end{equation}
\end{assumption}
From assumption~\ref{ass:lsmooth}, we know $f$ is also $L$-smooth. For any $L$-smooth and $\mu$-strongly convex function $f(x)$, we have $L \geq \mu$ and we define the condition number of $f(x)$ as $\kappa=\frac{L}{\mu}$. Due to assumption~\ref{ass:strongcon} and the convexity of $r(x)$, $h(x)$ has a unique global minimizer which is denoted as $x^*$. We also assume that the regularization function $r(x)$ is proximable in the sense that its proximal mapping 
$$
\proximal_{\eta,r}(x)=\argmin_{z\in\mathbb{R}^d}\Big(r(z)+\frac{1}{2\eta}\|z-x\|^2\Big),
$$
can be computed efficiently. For instance, the proximal mapping of $L_1$-regularization function is of a closed form. 
%$\proximal_{\eta,\lambda\norm{x}_1}(x)_i= \mathrm{sign}(x_i)\max\{|x_i|-\lambda \eta,0\}$ where $x_i$ denotes the $i$-th component of vector $x$.

\textbf{Decentralized Communication} In the decentralized setting, due to the absence of the parameter server, the agents cooperatively minimize the function $h(x)$ based only on decentralized communications over a given network. The decentralized communication is defined through an undirected graph $\mathcal{G}$ of $m$ nodes where each node of the graph corresponds to an agent. In a decentralized communication step, each agent is allowed to send $\cO(1)$ vectors of size $d$ to their neighbors. 

Gossip algorithms are generally used in the decentralized setting \cite{shi2015extra, Xu2020Unified, gt_sarah2020shicong, xin2020variance}. To model a gossip communication step, we introduce the gossip matrix $W \in \RR^{m \times m}$. Agent $i$ and $j$ are allowed to exchange information if and only if there exists an edge between them in the graph $\mathcal{G}$, that is, $w_{ij} \neq 0$. In a gossip communication step, each agent $i$ will receive the current iterates of her neighbors. Then, each agent $i$ updates her iterate by a weighted average of the received iterates and her iterate, that is, $\sum_{j=1,w_{ij}\neq 0}^m w_{ij}\xb_j$. If we define the following aggregated notation of the column vectors $x_1,...,x_m$:
\begin{equation}
\xb 
=
[
x_1,\cdots,x_m]^\top,
\end{equation}
the decentralized communication can be abstracted as multiplication by the gossip matrix:
$$\xb^{\text{new}} = W\xb^{\text{old}}.$$
The gossip matrix is assumed to satisfy several conditions.
\begin{assumption}\label{ass:w} Let $W$ be the gossip matrix. We assume that 
\begin{enumerate}
    \item W is symmetric;
    \item $\mathbf{0} \preceq W \preceq I, W \mathbf{1}=\mathbf{1},\operatorname{null}(I-W)=\operatorname{span}(\mathbf{1})$,
    where $I$ is the $m \times m$ identity matrix and $\mathbf{1}$ is the $m$-dimensional all one column vector.
\end{enumerate}
\end{assumption}

As a result of assumption~\ref{ass:w}, the second-largest singular value of $W$, denoted as $\lambda_2(W)$, is strictly less than $1$. It is also called the mixing rate of the network topology since we have $\|W\xb-\frac{1}{m}\mathbf{11^\top}\xb\| \le \lambda_2(W) \norm{\xb - \frac{1}{m}\mathbf{11^\top}\xb}$. Therefore, $\lambda_2(W)$ indicates how fast the variables will be averaged through decentralized communications. For instance, a fully connected network with $W=\frac{1}{m}\mathbf{11^\top}$ has $\lambda_2(W)=0$ but each agent communicates from and to all other agents; exp$2$-ring has $\lambda_2(W)=1-2/(2\lfloor\log_2(m-1)\rfloor)$ and each agent is connected to $\lfloor\log_2(m-1)\rfloor$ neighbors. With the exp$2$-ring topology, each agent is connected to the agent $2^0,2^1,2^2,...$ hops away and the weights are set uniformly. Many existing decentralized optimization algorithms can be implemented efficiently with exp$2$-ring by \cite{bluefrog2021}.
%When there are multiple rounds of communication within one iteration, \texttt{FastMix} (algorithm~\ref{alg:mix}) is more efficient as compared to direct multiplications by the gossip matrix and it will be used in the proposed framework.

\subsection{Notation}
Through the rest of this paper, $\norm{\cdot}$ denotes 2-norm for vectors and Frobenius norm for matrices. $\norm{\cdot}_1$ denotes $1$-norm for vectors. $\langle\cdot,\cdot\rangle$ is the inner product of two vectors. 
When notation "$\leq$" is applied to vectors of the same dimension, it means element-wise "less than or equal to". We also define the following notations:
\begin{equation}
\nabla F(\xb)
=
[
\nabla f(\xb_1),\cdots,\nabla f(\xb_m)]^\top,
\end{equation}
and
\begin{equation}
\bbx = \frac{1}{m}\mathbf{1}^\top \xb,
\;
R(\xb) = \frac{1}{m}  \sum_{i=1}^{m}r(\xb_i),
\end{equation}
where we use the convention that $\xb_i$ is the $i$-th row of matrix $\xb$. Besides, the iteration index will be used as superscript. For instance, $\xb_i^t$ is the iterate of agent $i$ at iteration $t$. In next subsection, We will introduce several important concepts and their related notations including $\bs, \vb, \bar{s}, \bar{v}$. These notations are defined similarly. Moreover, we denote the aggregated proximal operator as
\begin{equation}
\label{eq:proxes}
\proximal_{m\eta,R}(\xb)=\argmin_{\zb\in\mathbb{R}^{m\times d}}\Big(R(\zb)+\frac{1}{2m\eta}\|\zb-\xb\|^2\Big).
\end{equation}
We will also use the following notations. $D_f(x,y) = f(x)-f(y)-\langle \nabla f(y), x-y\rangle$ is the \textit{Bregman divergence} associated with $f(\cdot)$. Similar to the centralized analysis framework \cite{unifiedframework2019peter}, the definitions of \textit{gradient learning quantity} $\Delta^t$ and \textit{Lyapunov function} $V^t$ can vary depending on the associated variance reduction technique. For \texttt{PMGT-SAGA}, we have
\begin{equation}
    \label{eq:gradient_learning_dsaga}
    \Delta^t = \frac{1}{mn}\sum_{i,j=1}^{m,n} \| \nabla f_{i,j}(\phi_{i,j}^{t})-\nabla f_{i,j}(x^*)\|^2.
\end{equation}
For \texttt{PMGT-LSVRG}, we have
\begin{equation}
    \label{eq:gradient_learning_dlsvrg}
    \Delta^t = \frac{1}{mn}\sum_{i,j=1}^{m,n} \| \nabla f_{i,j}(\wb^t_i)-\nabla f_{i,j}(x^*)\|^2.
\end{equation}
For both \texttt{PMGT-SAGA} and \texttt{PMGT-LSVRG}, we have
\begin{equation}
    \label{eq:lyapunov}
    V^t=\|\bar{x}^t - x^*\|^2 + 4n\eta^2 \Delta^{t}.
\end{equation}
Variables $\eta, \phi_{i,j}^t$, and $\wb_t^i$ are defined in Algorithm~\ref{alg:dsaga} and Algorithm~\ref{alg:dlsvrg} and are introduced in section~\ref{sec:algodevelop}.

The performance of a decentralized optimization algorithm is usually measured by two quantities, namely, the computation complexity $T$ and the communication complexity $C$. The computational complexity and the communication complexity are the number of component gradient evaluations (evaluation of $\nabla f_{i,j}(x)$) and the number of decentralized communications for each agent, respectively, to achieve $\epsilon$-approximate solution which is defined as
\begin{equation}
    \label{eq:epsilon_optimal}
    \max\left\{\frac{1}{m}\norm{\xb^t-\mathbf{1}\bar{x}^t}^2, \norm{\bar{x}^t - x^*}^2 \right\}< \epsilon.
\end{equation}
in the decentralized setting. %We will also consider the iteration complexity when there are multiple decentralized communication steps within one iteration. 

\subsection{Preliminaries}
The \texttt{PMGT-VR} framework is based on several techniques including variance reduction, gradient tracking, and multi-consensus. We briefly introduce these concepts in this subsection. Their impacts on the proposed decentralized framework will be illustrated in sections~\ref{sec:algodevelop} and~\ref{sec:analysis}.

\noindent\textbf{SGD and Variance Reduction} We consider a single-machine empirical risk minimization problem where the objective function is of the form: $g(x) := \frac{1}{n}\sum_{i=1}^n g_i(x)$ and we assume that each $g_i$ is $L$-smooth and $\mu$-strongly convex. Therefore, $g(x)$ has a unique minimizer $x^*$. Stochastic gradient descent (\texttt{SGD}) is a popular and powerful approach and has been extensively used for this problem when $n$ is large. \texttt{SGD} estimates the full gradient $\nabla g(x)$ by a stochastic one $\nabla g_i(x)$ where $i$ is picked from $\{1,2,..,n\}$ at random. Since computing $\nabla g_i(x)$ is roughly $n$ times faster than computing the full gradient $\nabla g(x)$, \texttt{SGD} is of a lower per-iteration cost as compared to gradient descent (\texttt{GD}) based on full gradients. However, \texttt{SGD} can only admit a sublinear convergence rate even for strongly convex and smooth problems due to the variance of gradient estimator $\nabla g_i(x)$. To reach an $\epsilon$-approximate solution, i.e., $\EE[\norm{x-x^*}^2] < \epsilon$, \texttt{SGD} requires $\cO(\frac{1}{\epsilon})$ times component gradient evaluation. Variance reduction techniques are designed for the stochastic optimization problem where a better gradient estimator $v$ is used so that the variance gradually tends to zero. Variance reduction methods have a low per-iteration cost similar to \texttt{SGD} and, at the same time, achieve a linear convergence rate. Many existing variance reduction methods require $O((n+\kappa)\cdot \log \frac{1}{\epsilon})$ times component gradient evaluation to achieve an $\epsilon$-approximate solution \cite{johnson2013accelerating,lsvrg2019kovalev,sarah2017Lam,saga2014Aaron}.

\noindent\textbf{Gradient tracking} Gradient tracking proposed by \cite{zhu2010gradientracking} and later adopted by \cite{qu2017harnessing} is a powerful gradient estimation scheme for the decentralized optimization problem. The innovative idea of gradient tracking is to asymptotically approach the global gradient $\nabla f(\xb_i^t)$ by a gradient tracker $\bs_i^t$ through local computations and decentralized communications. This gradient estimation scheme performs the following update at each iteration
$$\bs^{t+1} = W\bs^t+\vb^{t+1}-\vb^t,$$
where $\vb^{t}_i$ is the local (variance reduction) gradient estimator of $\nabla f_i(x)$ private to agent $i$ at iteration $t$. With the aid of gradient tracking, agents can dynamically track the average of local gradient estimators $\vb_i^t$ by the average of global gradient trackers $\bs^t_i$, that is,
$$\bar{s}^{t+1} = \bar{v}^{t+1}.$$
Under the assumptions~\ref{ass:lsmooth} and~\ref{ass:w}, we can show that as the consensus errors $\norm{\xb^t-\mathbf{1}\bar{x}^t}$ and $\norm{\bs^t-\mathbf{1}\bar{s}^t}$ tend to zero, one has $\bs_i^t \to \bar{s}^t \to \nabla f(\bar{x}^t)$.

\noindent\textbf{Multi-consensus}
%A consensus step is a process in which the agent updates its variable with a linear combination of its variable and variables of local neighbors.
Multi-consensus simply means that there are multiple decentralized communication steps within one iteration. If we denote the number of decentralized communication steps as $K$, multiplication by $W^K$ improves the mixing rate from $\lambda_2(W)$ to $\lambda_2(W)^{K}$. Therefore, if we set $K$ to infinity, all agents will share $\bar{x}^t$ which is the same as the master/slave case. Motivated by this observation, we choose $K>1$ so that the convergence rate will not be degraded by a poorly connected network (where $\lambda_2(W)$ is close to $1$). Moreover, multi-consensus can be naturally accelerated (e.g. via algorithm~\ref{alg:mix}, \texttt{FastMix}) and leads to a better dependence on the topology. 
%In this paper, we will use \texttt{FastMix} proposed by \cite{Liu2011FastMix} as it is more efficient as compared to direct multiplications by $W$ when there are multiple decentralized communication steps.

There are two gradient estimators in the \texttt{PMGT-VR} framework (Algorithm \ref{alg:mgt}). Whereas the term "global gradient tracker" refers to the gradient-tracking estimator $\bs^t_i$, "local gradient estimator" refers to the local variance reduction estimator $\vb_i^t$. 

\begin{algorithm}[tb]
	\caption{PMGT-VR Framework}
	\label{alg:mgt}
	\begin{small}
		\begin{algorithmic}[1]
			\STATE {\bf Input:} $\xb_i^{0}=\xb_j^{0}$ for $1\le i,j,\le m$, $\vb^{-1}=\bs^{-1} = \nabla F(\xb^0)$,  $\eta = \frac{1}{12L}$ and  $K = \cO\left(\frac{1}{\sqrt{1-\lambda_2(W)}}\log \max \left(\kappa, n\right)\right)$.
			\FOR {$t=0,\dots, T$ }
 
			\STATE Update the local stochastic gradient estimators $\vb^{t}$;
			\STATE Update the local gradient trackers as 
			$\bs^{t} = \mathrm{FastMix}\left(\bs^{t-1} + \vb^{t} - \vb^{t-1}, K\right)$.
			\STATE  $\xb^{t+1} =\mathrm{FastMix}(\proximal_{\eta m,R}( \xb^{t} - \eta \bs^{t}),K) $; 
			\ENDFOR
			\STATE {\bf Output:} $\xb^{T+1}$.
		\end{algorithmic}
	\end{small}
\end{algorithm}

\section{Algorithm Development}\label{sec:algodevelop}
% DSGD: local variance + dissimilarity
% GT-SGD
% GT-VR: GT-SAGA, GT-SVRG
% Existing work: inferior to centralized VR: Can we admit infty -> centralized, so more communication rounds.
The complete \texttt{PMGT-VR} framework is described in algorithm~\ref{alg:mgt} which is based on a combination of variance reduction (line 3), gradient tracking (line 4), and multi-consensus (\texttt{FastMix}). In what follows, we start with \texttt{DSGD}, and discuss the challenges of decentralized stochastic optimization algorithm design. We will add these ingredients one by one to illustrate how they affect the performance of the proposed algorithmic framework. Let us assume that $r(x)=0$ first and we will extend to the composite optimization problem later.
\subsection{The General Framework}
%\textbf{Gradient tracking} Decentralized SGD (\texttt{DSGD}) \cite{ram2010distributed, dsgd2017lian} and decentralized GD (\texttt{DGD}) \cite{nedic2009distributed,chen2012dgd} are based on decentralized communication and updates with local gradients. They iteratively perform the following updates:
\textbf{Decentralized SGD} \texttt{DSGD} is based on decentralized communication and updates with local gradients \cite{ram2010distributed, dsgd2017lian} where each agent $i$ performs the following iterative update:
\begin{equation*}
    \begin{aligned}
%    \xb^{t+1}_i &= (W\xb^t)_i - \nabla f_i(\xb_i^t) ~ &(DGD)\\
%    \xb^{t+1}_i &= (W\xb^t)_i - \nabla f_{i,j}(\xb_i^t) ~ &(DSGD)
    \xb^{t+1}_i &= (W\xb^{t})_i - \nabla f_{i,j_i}(\xb_i^{t}),
    \end{aligned}
\end{equation*}
where $\nabla f_{i,j_i}(\xb_i^{t-1})$ is the local stochastic gradient. Under the assumption that each $f_i$ is $L$-smooth and $\mu$-strongly convex, \cite{yuan2019dsgd} shows that with a constant stepsize, $\EE \norm{\xb_i^t-x^*}^2$ decays linearly to a neighbourhood of the minimizer which is characterized as
\begin{align}
&\limsup _{t \rightarrow \infty} \frac{1}{n} \sum_{i=1}^{n} \EE\left[\left\|\xb_i^t-x^*\right\|_{2}^{2}\right]\notag\\
&=\mathcal{O}\left(\frac{\eta \sigma^{2}}{m \mu}+\frac{\eta^{2} \kappa^{2} \sigma^{2}}{1-\lambda_2(W)}+\frac{\eta^2\kappa^2\sum_{i=1}^m \norm{\nabla f_i(x^*)}^2}{m(1-\lambda_2(W))^2}\right), \label{eq:aa}
\end{align}
where $\sigma^2$ is the upper bound of variances %$\EE \left[ \norm{\nabla f_{i,j} (\xb_i^t) - \nabla f_i (\xb_i^t)}^2\right], \forall i,t$ 
of local gradient noise and $\eta$ is a constant stepsize. Unlike \texttt{SGD}, in addition to the variance of local stochastic gradients, the performance of \texttt{DSGD} is also degraded by the dissimilarity among the datasets across the agents.
We can see that \texttt{DSGD} has an additional bias (the third term of Eqn.~\eqref{eq:aa}) which could be arbitrarily large due to the dissimilarity among the datasets across agents. Moreover, the minimizer $x^*$ may not necessarily be a fixed point (in expectation) of the above update as $\nabla f_i(x^*)$ can be non-zero in general. 

\textbf{Gradient tracking} The above issues can be overcome by using the global full gradient $\nabla f(x)$. However, agents have no access to the global gradient and thus an efficient global gradient estimation scheme based on decentralized communication is required. Motivated by this observation, \cite{shi2018gtdsgd} proposed \texttt{GT-DSGD} whose iterative update is performed as
\begin{equation*}
    \begin{aligned}
    \xb^{t+1}_i &= (W\xb^{t})_i - \bs^{t}_i, \\
    \bs^{t+1}_i &= (W\bs^{t})_i + \nabla f_{i,j_{i}}(\xb_i^{t+1}) - \nabla f_{i,j_{i}}(\xb_i^{t}).
    \end{aligned}
\end{equation*}
With gradient tracking, each agent $i$ can approach $\nabla f(\xb_i^t)$ by the global gradient tracker $\bs_i^t$ and this approximation eliminates the bias in the theoretical bound of \texttt{DSGD}. In this case, when the network is well connected, the convergence behavior of \texttt{GT-DSGD} is determined only by the stepsize sequence and the variance of local stochastic gradient which is similar to \texttt{SGD}.

\textbf{Variance reduction} \texttt{GT-DSGD} can only achieve a sublinear convergence rate. Similar with the centralized case, it is natural to use variance reduction gradient estimator in \texttt{GT-DSGD} to estimate the local full gradient $\nabla f_i(x)$. \cite{xin2020variance} explored this idea and proposed \texttt{GT-SVRG} and \texttt{GT-SAGA} that converge linearly under assumptions $1-3$. The computation complexities (the same as communication complexities) of \texttt{GT-SVRG} and \texttt{GT-SAGA} to reach an $\epsilon$-approximate solution are $\cO\left((n+\frac{\kappa^2\log\kappa}{(1-\lambda_2(W))^2})\log\frac{1}{\epsilon}\right)$ and $\cO\left((n+\frac{\kappa^2}{(1-\lambda_2(W))^2})\log\frac{1}{\epsilon}\right)$, respectively. We can see that a \texttt{GT-VR} algorithm can only achieve a convergence rate inferior to that of its centralized counterpart. In particular, the computation complexity is worse by a factor dependent on the network topology which is the cost of being decentralized.

\textbf{Multi-consensus} %In the master/slave setting, all agents share the same iterate after communications from and to the parameter server. 
In the decentralized setting, agents asymptotically achieve consensus through decentralized communications. Combining with gradient tracking, both consensus errors and gradient tracking error (formally defined in section~\ref{sec:analysis}) asymptotically tend to zero in the gradient tracking framework. In this case, we may expect that the decentralized algorithms asymptotically behave similarly with their centralized counterparts. However, the consensus steps will be influenced by the mixing rate $\lambda_2(W)$. From the theoretical guarantees of \texttt{GT-VR} and \texttt{GT-DSGD}, we can see that the performance of an algorithm with gradient tracking degrades when the mixing rate is close to $1$. This issue can be overcome by involving $K$ decentralized communication steps within one iteration to improve the mixing rate from $\lambda_2(W)$ to $\lambda_2(W)^K$. This motivates the usage of multi-consensus. With $K$ suggested in Theorem~\ref{th:convergence_rate}, the mixing rate is improved to be good enough so that the iteration complexity will be independent of the network topology. In this case, we can approach the centralized performance even with a poorly connected network. Moreover, multi-consensus also contributes to a better dependence on the topology because it can be naturally accelerated via algorithm~\ref{alg:mix}. The resulting faster convergence rate and accelerated consensus steps compensate the extra communication costs of multi-consensus. 
Consequently, the overall communication complexity of \texttt{PMGT-VR} is also generally lower as compared to \texttt{GT-VR} algorithms.

%On the other hand, as there are $K$ rounds of decentralized communications within one iteration, the number of communications is $K$ times larger than the number of iterations in this case. However, the extra communication costs are compensated by two factors: (a) a faster convergence rate due to the usage of multi-consensus; (b) a better dependence on the topology. Specifically, the latter factor is because multi-consensus can be naturally accelerated and the overall communication complexity of \texttt{PMGT-VR} is also generally lower as compared to \texttt{GT-VR} algorithms.
%However, we will show that the extra communication costs are compensated by a faster convergence rate and the overall communication complexity of \texttt{PMGT-VR} is also generally lower as compared to \texttt{GT-VR} algorithms.

To summary, the introduced gradient tracking and multi-consensus essentially make the proposed algorithms asymptotically behave like a centralized one independent of the network topology. Therefore, the instantiations of \texttt{PMGT-VR} can approximate their centralized counterparts and have similar convergence properties. In particular, since the proposed framework uses variance reduction technique, it converges linearly to the minimizer $x^*$ under assumptions $1-3$. 

% Intuitively, with the assistance of gradient tracking, an agent $i$ will approximately update along the global gradient $\nabla f(\cdot)$ evaluated at $\xb_i^t$. If the consensus error $\norm{W^K\xb^t-\frac{1}{m}11^\top\xb^t}$ is small, the update directions are similar due to the $L$-smoothness assumption. Since the consensus error decays geometrically at the rate of $\lambda_2(W)^K$, the local iterates will be approximately averaged at each iteration with large $K$. 
%prior decentralized stochastic algorithms.

\textbf{Extension to the composite setting} Finally, one can further incorporate the proximal mapping with our framework. Although it is usually non-trivial to extend a smooth decentralized algorithm to the composite setting with the same convergence rate, the extension is rather natural in our framework. Our analysis relies on characterizing the interrelationship of several quantities (formally defined in section~\ref{sec:analysis}) by a linear system inequality. In our framework, the usage of proximal mapping results in another inequality about the considered quantities resulting from the non-expansiveness of the proximal mapping. With or without the proximal mapping, the obtained linear system inequalities are different in the coefficients. However, we control the spectral radius of the coefficient matrices directly by the parameter $K$ so the techniques for the smooth case can be extended to the composite case but with different per-iteration number of communications.% The formal definitions of those quantities and more intuitions are provided in section~\ref{sec:analysis}.

% The use of multi-consensus and gradient tracking is inspired by \texttt{Mudag} of \cite{mudag2020ye}. However, \texttt{Mudag} is based on full gradients and can only handle the smooth optimization problem in which $r(x)=0$. To the best of our knowledge, \texttt{PMGT-VR} is the first stochastic proximal method that converges linearly.

\subsection{PMGT-SAGA and PMGT-LSVRG}
Different choices of local variance reduction gradient estimator $\vb$ (line $4$ of algorithm~\ref{alg:mgt}) lead to different \texttt{PMGT-VR} algorithms. In this section, we describe and compare two representative algorithms \texttt{PMGT-SAGA} and \texttt{PMGT-LSVRG} that are described in algorithm~\ref{alg:dsaga} and algorithm~\ref{alg:dlsvrg}, respectively.

\begin{algorithm}[H]
	\caption{PMGT-SAGA}
	\label{alg:dsaga}
	\begin{small}
		\begin{algorithmic}[1]
			\STATE {\bf Input:} $\xb^0_{i}=\xb^0_{j}$ for $1\le i,j,\le m$, $\vb^{-1}=\bs^{-1} =  \nabla F(\xb^0)$,  $\eta = \frac{1}{12L}$ and  $K = \frac{1}{\sqrt{1 - \lambda_2(W)}} \log 41\max\left(24 \kappa, 4 n\right)$.
			\STATE Take $\phi_{i,j}^{0} = \xb_i^{0}, \forall i \in \{1,2,...,m\}, j \in \{1,2,...,n\}$.
			\FOR {$t=0,\dots, T$ }
			\STATE In parallel, for each agent $i$:
			\STATE Pick a $j_i$ uniformly at random from $\{1,\dots, n\}$.
			\STATE
			Take $\phi_{i,j_i}^{t+1} = \xb_i^{t}$.
			\STATE Update the local variance reduction estimator:
			\begin{equation*}
			\vb_i^{t} = \nabla f_{i,j_i}(\phi_{i,j_i}^{t+1}) - \nabla f_{i,j_i}(\phi_{i,j_i}^{t}) + \frac{1}{n}\sum_{j=1}^{n} \nabla f_{i,j}(\phi_{i,j}^{t}).
			\end{equation*}
			\STATE Update the local gradient tracker as 
			$\bs^{t} = \mathrm{FastMix}\left(\bs^{t-1} + \vb^{t} - \vb^{t-1}, K\right)$.
			\STATE  $\xb^{t+1}_i =\mathrm{FastMix}(\proximal_{\eta m,R}(\xb^{t} - \eta \bs^{t}),K)_i $. 
			\ENDFOR
			\STATE {\bf Output:} $\xb^{T+1}$.
		\end{algorithmic}
	\end{small}
\end{algorithm}

\texttt{PMGT-SAGA} is a \texttt{SAGA}-based implementation of the proposed framework. As the original \texttt{SAGA}, each agent needs to store a table of gradients for the local variance reduction gradient estimator. At each iteration $t$, each agent $i$ picks $j_i$ from $\{1,2,...,n\}$ at random and it replaces $\nabla f_{i,j_i}(\phi_{i,j_i}^{t+1})$ with $\nabla f_{i,j_i}(\xb_i^{t})$ and all other entries of the table remain unchanged. Then, each agent $i$ updates her local gradient estimator by
\begin{equation}
	\label{eq:phi}
\vb_i^{t} = \nabla f_{i,j_i}(\phi_{i,j_i}^{t+1}) - \nabla f_{i,j_i}(\phi_{i,j_i}^{t}) + \frac{1}{n}\sum_{j=1}^{n} \nabla f_{i,j}(\phi_{i,j}^{t}),
\end{equation}
where $\phi^{t}_{i,j}$ is the most recent iterate before iteration $t$ at which $\nabla f_{i,j}(\cdot)$ is evaluated. We see that \texttt{PMGT-SAGA} has a storage complexity of $\mathcal{O}(dn)$ for each agent due to the gradient tables.

\begin{algorithm}[H]
	\caption{PMGT-LSVRG}
	\label{alg:dlsvrg}
	\begin{small}
		\begin{algorithmic}[1]
			\STATE {\bf Input:} $\xb^0_{i}=\xb^0_{j}$ for $1\le i,j,\le m$, $\vb^{-1}=\bs^{-1} = \nabla F(\xb^0)$, $\eta = \frac{1}{12L}$, $p=\frac{1}{n}$, and  $K = \frac{1}{\sqrt{1 - \lambda_2(W)}} \log 41\max\left(24 \kappa, 4 n\right)$.
			\STATE Take $\wb_i^0=\xb_i^0, \forall i \in \{1,2,...,m\}$.
			\FOR {$t=0,\dots, T$ }
			\STATE In parallel, for each agent $i$:
			\STATE Pick a $j_i$ uniformly at random from $\{1,\dots, n\}$.
			\STATE Update the local variance reduction estimator:
			\begin{equation*}
            \vb_i^{t} = \nabla f_{i,j_i}(\xb^{t}_i) - \nabla f_{i,j_i} (\wb^{t}_i)+\nabla f_i(\wb^{t}_i).
			\end{equation*}
			\STATE Take $\wb^{t+1}_i = \xb^{t}_i$ with probability $p$; otherwise $\wb_i^{t+1}=\wb_i^{t}$.
			\STATE Update the local gradient tracker as 
			$\bs^{t} = \mathrm{FastMix}\left(\bs^{t-1} + \vb^{t} - \vb^{t-1}, K\right)$.
			\STATE  $\xb^{t+1}_i =\mathrm{FastMix}(\proximal_{\eta m,R}(\xb^{t} - \eta \bs^{t}),K)_i $. 
			\ENDFOR
			\STATE {\bf Output:} $\xb^{T+1}$.
		\end{algorithmic}
	\end{small}
\end{algorithm}

\texttt{PMGT-LSVRG} is a \texttt{LSVRG}-based implementation of the proposed framework. \texttt{SVRG} proposed by \cite{johnson2013accelerating} contains two loops and computes the full gradient at the beginning of each inner loop. However, the theoretically optimal inner loop size depends on both $L$ and $\mu$ which may be hard to estimate for a real-world dataset and the analysis is also more complicated for handling the double loop structure. To conquer this dilemma, \cite{lsvrg2019kovalev} designs a variant of the original \texttt{SVRG}, called Loopless SVRG (\texttt{LSVRG}) in which the outer loop is removed and the agent updates the stored full gradient in a stochastic manner. In the case of \texttt{PMGT-LSVRG}, the local variance reduction gradient estimator is given by
\begin{equation}
	\label{eq:w_update}
    \vb_i^{t} = \nabla f_{i,j_i}(\xb^{t}_i) - \nabla f_{i,j_i} (\wb^{t}_i)+\nabla f_i(\wb^{t}_i),
\end{equation}
where $\wb^{t}_i$ is the most recent iterate at which $\nabla f_i(\cdot)$ is evaluated. 
At each iteration, each agent $i$ picks $j_i$ from $\{1,2,...,n\}$ at random and the reference point $\wb_i^{t+1}$ is replaced with a small probability $p$ by the $\xb_i^{t}$ and is left unchanged with probability $1-p$. For \texttt{LSVRG}, a simple choice of $p=1/n$ leads to a convergence rate identical to that of \texttt{SVRG}. 

By using the estimator in \texttt{PMGT-LSVRG}, agents need not store a table of gradients. However, the expected per-iteration number of gradient evaluations for each agent is $p \cdot (n+2)+(1-p)\cdot 2 = 3$ when $p=1/n$ for \texttt{PMGT-LSVRG}. Therefore, \texttt{PMGT-LSVRG} suffers from a higher expected per-iteration cost as compared to \texttt{PMGT-SAGA}. We observe that there is a trade-off between storage and computation efficiency here and users can implement \texttt{PMGT-VR} algorithms based on their customized needs.

\section{Main Results} \label{sec:result}
In this section, we establish the convergence rate of \texttt{PMGT-SAGA} and \texttt{PMGT-LSVRG} which matches that of centralized \texttt{SAGA} and \texttt{LSVRG}. Before continuing, we first define the vector of consensus errors.
\begin{equation}\label{def:consensus}
         \zb^t = [\frac{1}{m}\norm{\xb^t - \mathbf{1}\bbx^t}^2, \frac{\eta^2}{m} \norm{\bs^t - \mathbf{1}\bbs^t}^2]^\top.
\end{equation}

\subsection{Main Results}
\label{subsec:theory}
\begin{theorem}~{}
    \label{th:convergence_rate}
    Let assumptions~\ref{ass:lsmooth}, \ref{ass:strongcon}, and~\ref{ass:w} hold.
	If we set $p=\frac{1}{n}$ for \texttt{PMGT-LSVRG} and choose $K = \frac{1}{\sqrt{1 - \lambda_2(W)}} \log \frac{1}{\rho}$ where $\rho$ satisfies 
	\begin{equation}
	\label{eq:rho_cond}
    \rho \le \frac{1}{41}\min \left(\frac{1}{24 \kappa}, \frac{1}{4 n}\right),\\
	\end{equation}
	 and choose stepsize $\eta = 1/(12L)$, then, for both \texttt{PMGT-SAGA} and \texttt{PMGT-LSVRG} 
	 it holds that
	 \begin{equation}
	\label{eq:V_conv}
	\begin{aligned}
	\mathbb{E}\left[V^{t}\right] \leq& \max \left(1-\frac{1}{24 \kappa}, 1-\frac{1}{4 n}\right)^{t}\left( V^{0}+\left\|\mathbf{z}^{0}\right\|\right)\\
	\end{aligned}
	\end{equation}
	and
	\begin{equation}
	\label{eq:Z_conv}
	\begin{aligned}
	\mathbb{E}&\left[\max \left(\frac{1}{m}\left\|\mathbf{x}^{t}-\mathbf{1} \bar{x}^{t}\right\|^{2}, \frac{1}{144 m L^{2}}\left\|\mathbf{s}^{t}-\mathbf{1} \bar{s}^{t}\right\|^{2}\right)\right] 
    \leq \\
    & \left(\frac{1}{45387} \min \left(\frac{1}{36 \kappa^{2}}, \frac{1}{n^{2}}\right)+2^{-t}\right)\\
    &\cdot\max \left(1-\frac{1}{24 \kappa}, 1-\frac{1}{4 n}\right)^{t} \cdot\left(V^{0}+\left\|\mathbf{z}^{0}\right\|\right).
	\end{aligned}
	\end{equation}
	To find an $\epsilon$-approximate solution, the computation complexity $T$ and communication complexity $C$ are
	\begin{equation}
	\label{eq:complexity}
    \begin{aligned}
    T &= \cO\left(\max\left(\kappa, n\right) \log\frac{1}{\epsilon}\right),\\
    C &= \cO\left(\max\left(n\log n, \kappa \log \kappa\right)\frac{1}{\sqrt{1-\lambda_2(W)}}\log\frac{1}{\epsilon}\right).
    \end{aligned}
	\end{equation}	
\end{theorem}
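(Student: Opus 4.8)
The plan is to split the analysis into three coupled recursions — a \emph{centralized-like} descent inequality for the Lyapunov function $V^t$, a recursion for the gradient-learning quantity $\Delta^t$ (folded into $V^t$ through \eqref{eq:lyapunov}), and a recursion for the consensus vector $\zb^t$ of \eqref{def:consensus} — and then close them with a small-gain / linear-system argument whose contraction factor is controlled by the choice of $\rho$ (equivalently $K$).

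First I would reduce to the behavior of the average iterate. Since \texttt{FastMix} preserves row-averages, one has $\bbs^t=\bbv^t$ (the gradient-tracking invariant) and
\[
\bbx^{t+1}=\tfrac1m\mathbf1^\top\proximal_{\eta m,R}(\xb^t-\eta\bs^t)=\tfrac1m\sum_{i=1}^m\proximal_{\eta,r}(\xb^t_i-\eta\bs^t_i),
\]
which, by $1$-Lipschitzness of $\proximal_{\eta,r}$, equals $\proximal_{\eta,r}(\bbx^t-\eta\nabla f(\bbx^t))$ up to an error governed by $\|\zb^t\|$ and by the distance of $\bbv^t$ from $\frac1m\sum_i\nabla f_i(\xb^t_i)$. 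Hence $\bbx^t$ performs a perturbed centralized proximal variance-reduced step, and I would invoke the standard centralized \texttt{SAGA}/\texttt{LSVRG} argument (the three-point prox inequality, $L$-smoothness via \eqref{eq:str_cvx}, $\mu$-strong convexity, and the $\Delta^{t+1}\le(1-\tfrac1n)\Delta^t+\tfrac{2L}{n}(\text{Bregman term})$ recursion — identical for the $\phi$-table of \texttt{PMGT-SAGA} and the $\wb$-reference of \texttt{PMGT-LSVRG} with $p=1/n$) with $\eta=1/(12L)$ to obtain
\[
\EE_t[V^{t+1}]\le\Big(1-\min\big(\tfrac1{24\kappa},\tfrac1{4n}\big)\Big)V^t+c_1\|\zb^t\|,
\]
the term $4n\eta^2\Delta^t$ in \eqref{eq:lyapunov} being exactly what absorbs the variance of $\vb^t$.

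Next I would derive the consensus recursion. Applying the \texttt{FastMix} contraction of Algorithm~\ref{alg:mix} (with $K=\tfrac1{\sqrt{1-\lambda_2(W)}}\log\tfrac1\rho$ the contraction factor is $\le\rho$) to both updates, together with the non-expansiveness of $\proximal_{\eta m,R}$ on the $\xb$-line and $L$-smoothness to bound $\|\vb^t-\vb^{t-1}\|$ (turning gradient differences into iterate differences, i.e.\ into $\zb^t,\zb^{t-1},V^t,V^{t-1}$) on the $\bs$-line, I get a vector inequality of the form $\EE[\zb^{t+1}]\preceq\rho^2 A\,\zb^t+\rho^2 B_t$ with $A$ a fixed nonnegative $2\times2$ matrix of $\cO(1)$ norm and $B_t$ built from $V^t$ and a one-step history. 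Taking norms and (to handle the lag) carrying $\|\zb^{t-1}\|$ or enlarging constants, I assemble the scalar system
\[
\begin{pmatrix}\EE V^{t+1}\\ \EE\|\zb^{t+1}\|\end{pmatrix}\preceq
\begin{pmatrix} 1-\min(\tfrac1{24\kappa},\tfrac1{4n}) & c_1\\ c_2\rho^2 & c_3\rho^2\end{pmatrix}
\begin{pmatrix}\EE V^{t}\\ \EE\|\zb^{t}\|\end{pmatrix}.
\]
The hypothesis $\rho\le\tfrac1{41}\min(\tfrac1{24\kappa},\tfrac1{4n})$ is precisely what makes the off-diagonal product $c_1c_2\rho^2$ negligible against the gap between the $(1,1)$ entry and $1$, so a one-line estimate of the characteristic polynomial gives spectral radius $\le\max(1-\tfrac1{24\kappa},1-\tfrac1{4n})$, and the $(2,2)$ component inherits the extra factor $\cO(\rho^2)+2^{-t}$ that appears in \eqref{eq:Z_conv}. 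Iterating from $t=0$ yields \eqref{eq:V_conv} and \eqref{eq:Z_conv}. For the complexity, \eqref{eq:epsilon_optimal} requires $\max(\tfrac1m\|\xb^t-\mathbf1\bbx^t\|^2,\|\bbx^t-x^*\|^2)<\epsilon$; both are dominated up to constants by $(V^0+\|\zb^0\|)\max(1-\tfrac1{24\kappa},1-\tfrac1{4n})^t$, so $T=\cO(\max(\kappa,n)\log\tfrac1\epsilon)$ iterations suffice; each iteration costs $\cO(1)$ component-gradient evaluations per agent ($2$ for \texttt{PMGT-SAGA}, $3$ in expectation for \texttt{PMGT-LSVRG}, plus amortized table maintenance) and $2K=\cO\big(\tfrac1{\sqrt{1-\lambda_2(W)}}\log\max(\kappa,n)\big)$ communications, giving $C=\cO\big(\max(n\log n,\kappa\log\kappa)\tfrac1{\sqrt{1-\lambda_2(W)}}\log\tfrac1\epsilon\big)$.

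\textbf{Main obstacle.} The hard part is the consensus recursion together with the spectral-radius verification, carried out uniformly for both instantiations and, crucially, in the presence of the proximal step: $\proximal_{\eta,r}$ is only $1$-Lipschitz, so the consensus error is not damped by the prox update at all — the entire burden of contraction falls on \texttt{FastMix}, which is exactly why the $K$/$\rho$ condition must take the form \eqref{eq:rho_cond}. Tracking how the lagged iterates $\vb^{t-1},\bs^{t-1}$ propagate, and converting $\|\vb^t-\vb^{t-1}\|$ and the variance of the \texttt{SAGA}/\texttt{LSVRG} estimator into the Lyapunov quantities \emph{without} degrading the $1-\tfrac1{4n}$ rate, is the delicate bookkeeping at the heart of the argument.
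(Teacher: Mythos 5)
Your plan follows essentially the same route as the paper: a perturbed centralized proximal--variance-reduction descent inequality for $V^t$ (the paper's Lemma~\ref{lem:V_dec}, built from Lemmas~\ref{lem:gradient_tracking_mgt}, \ref{lem:prox}, \ref{lem:gradient_convergence}, \ref{lem:gradient_learning_quantity}), a linear-system recursion for the consensus vector $\zb^t$ driven by $V$-type quantities with a one-step lag (Lemmas~\ref{lem:consensus_error_mgt}--\ref{lem:consensus_error}), and a small-gain closure under condition~\eqref{eq:rho_cond}; the paper closes the loop by induction on $t$ with an explicit unrolling of the $\zb$-recursion rather than by a literal spectral-radius computation, but that is cosmetic. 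One detail in your bookkeeping would fail as written: if the $(1,1)$ entry of your $2\times 2$ matrix is already $\max\left(1-\frac{1}{24\kappa},1-\frac{1}{4n}\right)$ and the off-diagonal entries are positive, its spectral radius is strictly larger than that value, so \eqref{eq:V_conv} cannot be concluded at that rate from that matrix. The paper instead establishes the one-step descent at the strictly faster rate $\max\left(1-\frac{1}{12\kappa},1-\frac{1}{2n}\right)$ and spends the factor-of-two slack, together with $\rho\le\frac{1}{41}\min\left(\frac{1}{24\kappa},\frac{1}{4n}\right)$, to absorb the consensus perturbation; note also that this perturbation enters as $\sqrt{V^t}$ times the consensus error rather than additively in $\|\zb^t\|$, so the system is not literally linear in $(V^t,\|\zb^t\|)$ and the induction must carry square roots (or pay a Young-inequality constant of order $\max(\kappa,n)$, which is exactly why $\rho$ must scale like $\min(1/\kappa,1/n)$).
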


\begin{remark}
\texttt{PMGT-SAGA} and \texttt{PMGT-LSVRG} admit a computation complexity (the same as iteration complexity) that matches that of their centralized counterparts.
\end{remark}

\begin{remark}
The communication complexity of the proposed algorithms implicitly depends on the number of agents $m$ through the second-largest singular value of the gossip matrix $W$. One may use the exp$2$-ring topology introduced in section~\ref{sec:notation} when the network can be designed. In this case, we have $\lambda_2(W)= 1-\frac{2}{2+\lfloor\log_2(m-1)\rfloor}$ and each agent communicates with $\lfloor\log_2(m-1)\rfloor)$ neighbors. Therefore, the per-iteration number of communications with neighbors for the proposed algorithms is $\cO\left(\left(\log \kappa + \log n\right)\log^{3/2} m\right)$. On the other hand, the per-iteration communication cost for the master/slave setting is $\cO(m)$. We can see that \texttt{PMGT-VR} methods can be preferable when $m$ is large as compared to their centralized counterparts.
\end{remark}

\begin{table*}
	\small
	\vspace{0.15in}
	\begin{center}
		\begin{tabular}{|c|c|c|c|}
			\hline
			Methods  & Problem & Complexity of computation & Complexity of communication\\
            \hline \texttt{ GT-SVRG } \cite{xin2020variance} & $f$ & $\mathcal{O}\left((n+\frac{\kappa^2\log\kappa}{(1-\lambda_2(W))^2})\log\frac{1}{\epsilon}\right)$ & $\mathcal{O}\left((n+\frac{\kappa^2\log\kappa}{(1-\lambda_2(W))^2})\log\frac{1}{\epsilon}\right)$\\
            
            \hline \texttt{ GT-SAGA } \cite{xin2020variance} & $f$ & $\mathcal{O}\left((n+\frac{\kappa^2}{(1-\lambda_2(W))^2})\log\frac{1}{\epsilon}\right)$ & $\mathcal{O}\left((n+\frac{\kappa^2}{(1-\lambda_2(W))^2})\log\frac{1}{\epsilon}\right)$ \\
            
            \hline \texttt{ PG-EXTRA } \cite{shi2015pgextra, Xu2020Unified}& $f+r$ & $\mathcal{O}\left(\frac{n\kappa}{(1-\lambda_2(W))}\log\frac{1}{\epsilon}\right)$ & $\mathcal{O}\left(\frac{n\kappa}{(1-\lambda_2(W))}\log\frac{1}{\epsilon}\right)$ \\
            
            \hline \texttt{ NIDS } \cite{li2019nids, Xu2020Unified}& $f+r$ & $\mathcal{O}\left(n(\kappa + \frac{1}{(1-\lambda_2(W))})\log\frac{1}{\epsilon}\right)$ & $\mathcal{O}\left((\kappa + \frac{1}{(1-\lambda_2(W))})\log\frac{1}{\epsilon}\right)$\\
            
            \hline \texttt{ Our methods } & $f+r$& $\mathcal{O}\left((n+\kappa)\log\frac{1}{\epsilon}\right)$ & $\mathcal{O}\left(\frac{(n\log n+\kappa \log \kappa)}{\sqrt{1-\lambda_2(W)}}\log\frac{1}{\epsilon}\right)$\\
			\hline
		\end{tabular}
	\end{center}
		\caption{Complexity comparisons between \texttt{PMGT-VR} algorithms and existing works for strongly convex problem. Note that \texttt{GT-SAGA} and \texttt{GT-SVRG} can only solve the smooth problems and other algorithms can solve both the smooth and composite problems. }
	\label{tbl:comp}
\end{table*}
\subsection{Comparison to existing algorithms}\label{subsec:comp}
In this subsection, we discuss the convergence properties established above. Table \ref{tbl:comp} presents a detailed comparison of our algorithms with state-of-the-art proximal algorithms (\texttt{PG-EXTRA} and \texttt{NIDS}) and the algorithms that are closely related to our framework (\texttt{GT-SAGA} and \texttt{GT-SVRG}).

We first note that the computation complexity %(also equals to the iteration complexity) 
of \texttt{PMGT-SAGA} and \texttt{PMGT-LSVRG} is the same as that of centralized \texttt{SAGA} and \texttt{LSVRG} which is not surprising as our algorithms are based on the imitation of their centralized counterparts and with a network improved by multi-consensus. 

%Since \texttt{GT-SVRG} achieves an inferior convergence rate as compared to \texttt{GT-SAGA}, we only compare our algorithms to \texttt{GT-SAGA}. 
We can see that the computation complexities of \texttt{GT-SAGA} and \texttt{GT-SVRG} are significantly worse than that of \texttt{PMGT-SAGA} and \texttt{PMGT-LSVRG}. Besides, as the faster convergence rate compensates for the extra rounds of decentralized communications in the proposed framework, the communication complexity of \texttt{PMGT-SAGA} and \texttt{PMGT-LSVRG} is also generally much lower than those of \texttt{GT-SAGA} and \texttt{GT-SVRG}. This illustrates the benefits of multi-consensus.

We only compare our algorithms with \texttt{NIDS} as \texttt{PG-EXTRA} is inferior to \texttt{NIDS}. We see that \texttt{PMGT-SAGA} and \texttt{PMGT-LSVRG} improve the computation complexity of \texttt{NIDS} from $\mathcal{O}\left(n(\kappa + \frac{1}{1-\lambda_2(W)})\log\frac{1}{\epsilon}\right)$ to $\mathcal{O}\left((n+\kappa)\log\frac{1}{\epsilon}\right)$ which illustrates the benefits of stochastic variance reduction algorithms over algorithms based on full gradients. On the other hand, the communication complexity of \texttt{PMGT-SAGA} and \texttt{PMGT-LSVRG} is $\mathcal{O}\left(\frac{(n\log n+\kappa \log \kappa)}{\sqrt{1-\lambda_2(W)}}\log\frac{1}{\epsilon}\right)$ which is in general worse than the $\mathcal{O}\left((\kappa + \frac{1}{1-\lambda_2(W)})\log\frac{1}{\epsilon}\right)$ communication complexity of \texttt{NIDS}. This can be interpreted as a trade-off between computation and communication efficiency. To further discuss this trade-off, we introduce the notation $\tau$ borrowed from \cite{hendrikx2020dual}, which is the relative ratio of communication cost and computation cost. In other words, evaluating $\nabla f_{i,j}(x)$ is of cost $1$ and the cost of one round of decentralized communication is $\tau$. The benefits of stochastic \texttt{PMGT-VR} methods are clearer for a relatively small $\tau$ which corresponds to the case when computation cost dominates. However, in numerical experiments section, we show that \texttt{PMGT-SAGA} and \texttt{PMGT-LSVRG} outperform \texttt{PG-EXTRA} and \texttt{NIDS} in terms of cost for a wide range of $\tau$.

\section{Convergence Analysis}
\label{sec:analysis}
\subsection{Convergence analysis: A sketch}
In this section, we provide several useful lemmas which illustrate the impacts of the ingredients used in the proposed framework. Throughout our analysis, we will consider the following quantities:
\begin{enumerate}
    \item consensus errors: $\frac{1}{m}\norm{\xb^t - \mathbf{1}\bbx^t}^2,\frac{\eta^2}{m} \norm{\bs^t - \mathbf{1}\bbs^t}^2$,
    \item the gradient tracking error: $\norm{\bbs^t - \nabla f(\bbx^t)}$,
    %\item the gradient learning error: $\norm{\bbs^t - \nabla f(x^*)}$,
    \item the gradient learning error: $\norm{\bbs_t - \nabla f(x^*)}^2$ and gradient learning quantity: $\Delta^t$ (defined in  Eqn.~\eqref{eq:gradient_learning_dsaga} and  Eqn.~\eqref{eq:gradient_learning_dlsvrg}),
    \item the convergence error: $\norm{\bbx^t-x^*}$ and Lyapunov function $V^t$ (defined in Eqn.~\eqref{eq:lyapunov}).
\end{enumerate} 
To summary, we will construct a linear system inequality about the above quantities. The idea is that the spectral radius of the coefficient matrix associated with this linear system inequality is less than $1$ if we use $K$ suggested in Theorem~\ref{th:convergence_rate}. In this case, both consensus errors and convergence error will decay linearly. Moreover, since our algorithms can approximate their centralized counterparts well, we can use similar techniques from the single-machine framework in \cite{unifiedframework2019peter}. 

\subsection{The linear system inequality}
To derive the desired linear system inequality, we show that the quantities are interrelated by the results of gradient tracking, proximal mapping and multi-consensus (\texttt{FastMix}). We start with the results of gradient tracking.
\begin{lemma}\label{lem:gradient_tracking_mgt}
	Let assumption~\ref{ass:lsmooth} hold. For both \texttt{PMGT-SAGA} and \texttt{PMGT-LSVRG}, it holds that $\bbs^t = \bar{v}^t$ and $\EE[\bbs^t] = \frac{1}{m}\sum_i^m \nabla f_i(\xb_i^t)$. Furthermore, we have 
	\begin{equation}
	\label{eq:var_s}
	\norm{\nabla f(\bbx^t) - \EE[\bbs^t]} 
	\le
	\frac{L}{\sqrt{m}}\norm{\xb^t - \mathbf{1}\bbx^t}.
	\end{equation}
\end{lemma}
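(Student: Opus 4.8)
The plan is to prove the three assertions in turn, each by a short direct computation. The one structural fact I would invoke throughout is that \texttt{FastMix} preserves averages: the Chebyshev-accelerated gossip iteration produces a matrix polynomial $P_K(W)$ applied to its input with $P_K(1)=1$, so since $W$ is symmetric with $W\mathbf1=\mathbf1$ (Assumption~\ref{ass:w}) one has $\mathbf1^\top\mathrm{FastMix}(\mathbf y,K)=\mathbf1^\top\mathbf y$ for any $\mathbf y$.

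For $\bbs^t=\bbv^t$: multiplying the tracker update $\bs^{t}=\mathrm{FastMix}(\bs^{t-1}+\vb^{t}-\vb^{t-1},K)$ by $\tfrac1m\mathbf1^\top$ and using average preservation collapses it to the scalar recursion $\bbs^{t}=\bbs^{t-1}+\bbv^{t}-\bbv^{t-1}$. Since the initialization sets $\bs^{-1}=\vb^{-1}$, we have $\bbs^{-1}=\bbv^{-1}$, and the recursion telescopes to $\bbs^{t}=\bbv^{t}$ for all $t\ge-1$. For $\EE[\bbs^t]$: by the identity just shown $\bbs^t=\bbv^t=\tfrac1m\sum_{i=1}^m\vb_i^t$, so it suffices to show $\EE[\vb_i^t\mid\mathcal H^t]=\nabla f_i(\xb_i^t)$, where $\mathcal H^t$ is the history through $\xb^t$ — under which $\xb_i^t$, the \texttt{SAGA} table $\{\phi_{i,j}^t\}$, and the \texttt{LSVRG} anchor $\wb_i^t$ are deterministic — and the only remaining randomness is the uniform draw of $j_i$. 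For \texttt{PMGT-SAGA}, substituting $\phi_{i,j_i}^{t+1}=\xb_i^t$ into~\eqref{eq:phi} and averaging over $j_i$ makes the control term $\tfrac1n\sum_{j}\nabla f_{i,j}(\phi_{i,j}^t)$ cancel $\EE_{j_i}[\nabla f_{i,j_i}(\phi_{i,j_i}^t)]$, leaving $\tfrac1n\sum_{j}\nabla f_{i,j}(\xb_i^t)=\nabla f_i(\xb_i^t)$. For \texttt{PMGT-LSVRG}, averaging~\eqref{eq:w_update} over $j_i$ cancels $\nabla f_i(\wb_i^t)$ against $\EE_{j_i}[\nabla f_{i,j_i}(\wb_i^t)]$, again leaving $\nabla f_i(\xb_i^t)$. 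Averaging over $i$ yields $\EE[\bbs^t]=\tfrac1m\sum_{i=1}^m\nabla f_i(\xb_i^t)$.

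For the bound~\eqref{eq:var_s}: using $\nabla f(\bbx^t)=\tfrac1m\sum_{i=1}^m\nabla f_i(\bbx^t)$ and subtracting the formula for $\EE[\bbs^t]$ gives $\nabla f(\bbx^t)-\EE[\bbs^t]=\tfrac1m\sum_{i=1}^m\big(\nabla f_i(\bbx^t)-\nabla f_i(\xb_i^t)\big)$. I would then apply the triangle inequality and the $L$-smoothness of each $f_i$ — which holds because $f_i=\tfrac1n\sum_j f_{i,j}$ is an average of $L$-smooth functions by Assumption~\ref{ass:lsmooth} — to get $\norm{\nabla f(\bbx^t)-\EE[\bbs^t]}\le\tfrac Lm\sum_{i=1}^m\norm{\bbx^t-\xb_i^t}$, and finish with Cauchy--Schwarz, $\sum_{i=1}^m\norm{\bbx^t-\xb_i^t}\le\sqrt m\big(\sum_{i=1}^m\norm{\bbx^t-\xb_i^t}^2\big)^{1/2}=\sqrt m\,\norm{\xb^t-\mathbf1\bbx^t}$, where the last equality is the definition of the Frobenius norm.

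I do not anticipate a genuine obstacle: the argument is essentially bookkeeping. The only points deserving care are (i) invoking the average-preservation property of \texttt{FastMix}, so that the gradient-tracking ``sum-preservation'' identity still goes through with several accelerated gossip steps rather than a single multiplication by $W$; and (ii) being explicit about the conditioning under which the expectation is taken, so that the stored \texttt{SAGA}/\texttt{LSVRG} gradients are treated as deterministic when the variance-reduction cancellation is performed.
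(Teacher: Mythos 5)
Your proposal is correct and follows essentially the same route as the paper: average preservation of \texttt{FastMix} gives the telescoping recursion $\bbs^{t}=\bbs^{t-1}+\bbv^{t}-\bbv^{t-1}$, unbiasedness of the \texttt{SAGA}/\texttt{LSVRG} estimators gives the expectation identity, and the final bound follows from $L$-smoothness of the $f_i$ together with the consensus error. The only cosmetic difference is that you use triangle inequality plus Cauchy--Schwarz where the paper applies Jensen's inequality to the squared norm; these are equivalent, and your extra detail on the conditioning and the cancellation in the variance-reduction estimators simply makes explicit what the paper asserts.
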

We can see that $\bar{s}^t$ dynamically tracks the average of local variance reduction gradient estimators and the gradient tracking error is upper bounded by the consensus error. Then, we introduce several properties of proximal mapping and multi-consensus which result in an iterative inequality of consensus errors.

\begin{lemma}
    \label{lem:fastmix}
Let $\xb^0, \xb^K \in \RR^{m \times d}$ be the input and output of \texttt{FastMix} (Algorithm~\ref{alg:mix} in the appendix), respectively and $\bar{x}=\frac{1}{m}\mathbf{1}^\top\xb^0$. Then we have
    \begin{equation}
        \label{eq:fastmix}
        \begin{aligned}
        \norm{\xb^K-\mathbf{1}\overline{x}} \leq \rho \norm{\xb^0-\mathbf{1}\overline{x}}, \bar{x} = \frac{1}{m}\mathbf{1}^\top\xb^K.
        \end{aligned}
    \end{equation}
where $\rho=(1-\sqrt{1-\lambda_2(W)})^K$.
\end{lemma}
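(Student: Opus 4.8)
The plan is to recognize \texttt{FastMix} as Chebyshev-accelerated gossip and analyze it by diagonalizing the mixing matrix on the subspace orthogonal to $\mathbf{1}$. Recall that \texttt{FastMix} (Algorithm~\ref{alg:mix}) produces iterates $\xb^{(0)} = \xb^0, \xb^{(1)}, \dots, \xb^{(K)} = \xb^K$ by a three-term recursion of the form $\xb^{(k+1)} = (1+\eta_w)\,W\xb^{(k)} - \eta_w\,\xb^{(k-1)}$ with $\xb^{(-1)} = \xb^{(0)}$, where the acceleration weight $\eta_w$ depends only on $\lambda_2(W)$. I would first settle the mean-preservation claim: by symmetry of $W$ and $W\mathbf{1} = \mathbf{1}$ we have $\mathbf{1}^\top W = \mathbf{1}^\top$, so $\mathbf{1}^\top\xb^{(k+1)} = (1+\eta_w)\mathbf{1}^\top\xb^{(k)} - \eta_w\mathbf{1}^\top\xb^{(k-1)}$, and a one-line induction from $\mathbf{1}^\top\xb^{(-1)} = \mathbf{1}^\top\xb^{(0)}$ shows $\mathbf{1}^\top\xb^{(k)} = \mathbf{1}^\top\xb^0$ for all $k$; hence $\tfrac1m\mathbf{1}^\top\xb^K = \bar{x}$.

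For the contraction bound I would project onto $\mathbf{1}^\perp$. Set $\Pi := \I - \tfrac1m\mathbf{1}\mathbf{1}^\top$, so that $\xb^{(k)} - \mathbf{1}\bar{x} = \Pi\xb^{(k)}$; since $\Pi$ commutes with $W$ (both symmetric, with $\mathbf{1}$ an eigenvector of $W$), the projected iterates obey the same recursion driven by $M := \Pi W\Pi$, which by Assumption~\ref{ass:w} is positive semidefinite with eigenvalues in $[0,\lambda_2(W)]$ (projecting out $\mathbf{1}$ removes the simple eigenvalue $1$). Expanding $\Pi\xb^0$ in an orthonormal eigenbasis $\{u_i\}$ of $M$, each component obeys $u_i^\top\xb^{(k)} = P_k(\lambda_i)\,u_i^\top\xb^0$, where the scalar polynomials $P_k$ are generated by $P_{k+1}(\lambda) = (1+\eta_w)\lambda P_k(\lambda) - \eta_w P_{k-1}(\lambda)$ with $P_0 = P_{-1} \equiv 1$ (note $P_k(1) = 1$, consistent with mean preservation). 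Because the Frobenius norm splits over this expansion,
\[
\norm{\xb^K - \mathbf{1}\bar{x}} \;\le\; \Big(\sup_{\lambda\in[0,\lambda_2(W)]}|P_K(\lambda)|\Big)\,\norm{\xb^0 - \mathbf{1}\bar{x}},
\]
so the statement reduces to a uniform bound on the polynomial $P_K$ over $[0,\lambda_2(W)]$.

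The main work — and essentially the only real computation — is to show $\sup_{\lambda\in[0,\lambda_2(W)]}|P_K(\lambda)| \le \rho = (1-\sqrt{1-\lambda_2(W)})^K$. I would match the recursion for $P_k$ to the Chebyshev three-term recurrence $T_{k+1}(y) = 2yT_k(y) - T_{k-1}(y)$, which (with $\eta_w$ chosen as prescribed) identifies $P_k$ with $T_k$ post-composed with the affine map from $[0,\lambda_2(W)]$ onto $[-1,1]$ and renormalized at $\lambda = 1$; then $\sup_{[0,\lambda_2(W)]}|P_k| = 1/T_k(y)$ at $y = (2-\lambda_2(W))/\lambda_2(W) > 1$, and the elementary estimate $T_k(y) \ge \tfrac12(y+\sqrt{y^2-1})^k$, simplified through the substitution $s = \sqrt{1-\lambda_2(W)}$ — which gives $y + \sqrt{y^2-1} = (1+s)/(1-s)$ — yields $\sup|P_K| \le 2\big((1-s)/(1+s)\big)^K$. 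The delicate point is removing this leftover factor $2$: one exploits the special initialization $\xb^{(-1)} = \xb^{(0)}$ together with the lower bound on $K$ enforced by the algorithm to sharpen the estimate to $(1-s)^K = \rho$. Tracking that constant correctly is the only subtlety; everything else is bookkeeping. Combining the three parts and taking square roots gives~\eqref{eq:fastmix}.
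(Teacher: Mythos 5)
The paper does not actually prove this lemma --- it is imported verbatim from \cite{Liu2011FastMix} --- so there is no internal argument to compare against; what matters is whether your sketch would stand on its own. Your skeleton is right in its first two parts: mean preservation via $\mathbf{1}^\top W=\mathbf{1}^\top$ and induction from $\xb^{(-1)}=\xb^{(0)}$ is correct, and so is the reduction, via the projector $\Pi$ and the eigenbasis of $W$ on $\mathbf{1}^\perp$, to bounding $\sup_{\lambda\in[0,\lambda_2(W)]}|P_K(\lambda)|$. The gap is in the polynomial bound itself. Because \texttt{FastMix} uses a \emph{constant} weight $\eta_w$, the polynomials generated by $P_{k+1}(\lambda)=(1+\eta_w)\lambda P_k(\lambda)-\eta_w P_{k-1}(\lambda)$ with $P_0=P_{-1}\equiv 1$ are \emph{not} the renormalized Chebyshev minimax polynomials $T_K(\cdot)/T_K(y)$; those require iteration-dependent weights. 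Substituting $P_k(\lambda)=\eta_w^{k/2}b_k$ gives $b_{k+1}=2yb_k-b_{k-1}$ with $y=\tfrac{(1+\eta_w)\lambda}{2\sqrt{\eta_w}}$, but the initial data become $b_0=1$, $b_{-1}=\sqrt{\eta_w}\neq y$, so $b_k$ is a mixture of first- and second-kind Chebyshev polynomials, not $T_k$. Moreover your affine map $[0,\lambda_2(W)]\to[-1,1]$ (sending $1\mapsto(2-\lambda_2)/\lambda_2$) does not correspond to this recursion at all: the update has no constant term, and the prescribed $\eta_w=\tfrac{1-\sqrt{1-\lambda_2^2(W)}}{1+\sqrt{1-\lambda_2^2(W)}}$ is tuned for the symmetric interval $[-\lambda_2,\lambda_2]$ (note the $\lambda_2^2$), under which the two characteristic roots coalesce exactly at $\lambda=\lambda_2(W)$ and $P_K(\lambda_2(W))=\bigl(1+(1-\sqrt{\eta_w})K\bigr)\eta_w^{K/2}$ --- a linear-in-$K$ transient that your Chebyshev identification misses entirely.

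That transient is precisely why the step you dismiss as ``the only subtlety'' --- removing the leftover factor of $2$ --- is in fact the crux, and it cannot be closed by appealing to ``the lower bound on $K$ enforced by the algorithm'': $K$ is a free input to \texttt{FastMix} and the lemma is asserted for every $K$. A concrete check: take $\lambda_2(W)=3/4$, so $\eta_w\approx 0.204$ and $\rho=1-\sqrt{1-\lambda_2(W)}=1/2$, and let $\xb^0-\mathbf{1}\bar{x}$ be an eigenvector of $W$ at $\lambda_2(W)$. One step gives $\|\xb^1-\mathbf{1}\bar{x}\|=\bigl(\lambda_2-\eta_w(1-\lambda_2)\bigr)\|\xb^0-\mathbf{1}\bar{x}\|\approx 0.699\,\|\xb^0-\mathbf{1}\bar{x}\|>0.5\,\|\xb^0-\mathbf{1}\bar{x}\|$, and the violation persists for moderate $K$ (the true per-step modulus $\sqrt{\eta_w}=\tfrac{1-\sqrt{1-\lambda_2^2}}{\lambda_2}$ is smaller than $\rho$, so the bound does eventually hold, but only once the geometric gap absorbs the $O(K)$ factor). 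So the constant-free inequality \eqref{eq:fastmix} cannot be derived by your route; any honest proof of this estimate must either carry an absolute constant in front of $\rho^K$ (the form in which it appears in the accelerated-gossip literature) or restrict $K$, and a correct argument for the algorithm as written should go through the modulus of the complex characteristic roots of the constant-coefficient recursion rather than through the minimax Chebyshev polynomial.
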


\begin{lemma}\label{lem:prox}
    Let $\proximal_{\eta m,R}^{(i)}(\xb)$ and $\xb_{i}$ denote the $i$-th row of the matrix $\proximal_{\eta m,R}(\xb)$ and $\xb$, respectively. Then, we have the following equality and inequality
	\begin{equation}\label{eq:local proximal equality}
	\proximal_{\eta m,R}^{(i)}(\xb)=\proximal_{\eta,r}(\xb_i),
	\end{equation}
	\begin{equation}
	\label{eq:prox_diff}
	\begin{aligned}
	&\norm{\proximal_{\eta  m,R}(\frac{1}{m}\mathbf{1}\mathbf{1}^\top \xb)-\frac{1}{m}\mathbf{1}\mathbf{1}^\top\proximal_{\eta  m,R}( \xb)}\\
	&\le
	\norm{  \xb-\frac{1}{m}\mathbf{1}\mathbf{1}^\top \xb }.
	\end{aligned}
	\end{equation}
\end{lemma}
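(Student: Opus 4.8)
The plan is to establish \eqref{eq:local proximal equality} by separability and then derive \eqref{eq:prox_diff} from it together with the classical non-expansiveness of the scalar proximal map.

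First I would unfold the definition \eqref{eq:proxes}. Since $\norm{\cdot}$ denotes the Frobenius norm, $\norm{\zb-\xb}^2=\sum_{i=1}^m\norm{\zb_i-\xb_i}^2$, and $R(\zb)=\frac1m\sum_{i=1}^m r(\zb_i)$, so the objective in \eqref{eq:proxes} equals $\frac1m\sum_{i=1}^m\big(r(\zb_i)+\frac1{2\eta}\norm{\zb_i-\xb_i}^2\big)$. This decouples across rows, hence the minimizer is attained row-wise, and its $i$-th row is $\argmin_{z}\big(r(z)+\frac1{2\eta}\norm{z-\xb_i}^2\big)=\proximal_{\eta,r}(\xb_i)$, which is exactly \eqref{eq:local proximal equality}.

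For \eqref{eq:prox_diff} I would apply \eqref{eq:local proximal equality} row by row. Every row of $\frac1m\mathbf{1}\mathbf{1}^\top\xb$ equals $\bbx$, so every row of $\proximal_{\eta m,R}(\frac1m\mathbf{1}\mathbf{1}^\top\xb)$ equals $\proximal_{\eta,r}(\bbx)$; meanwhile every row of $\frac1m\mathbf{1}\mathbf{1}^\top\proximal_{\eta m,R}(\xb)$ equals $\frac1m\sum_{j=1}^m\proximal_{\eta,r}(\xb_j)$. Consequently the left-hand side of \eqref{eq:prox_diff}, squared, equals
\[
m\,\Big\|\tfrac1m\sum_{j=1}^m\big(\proximal_{\eta,r}(\bbx)-\proximal_{\eta,r}(\xb_j)\big)\Big\|^2
\le \sum_{j=1}^m\big\|\proximal_{\eta,r}(\bbx)-\proximal_{\eta,r}(\xb_j)\big\|^2
\le \sum_{j=1}^m\norm{\bbx-\xb_j}^2,
\]
where the first inequality is Jensen's inequality (convexity of $\norm{\cdot}^2$) and the second is the $1$-Lipschitz property of the proximal operator of the proper closed convex function $r$ (firm non-expansiveness). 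The final sum is precisely $\norm{\xb-\frac1m\mathbf{1}\mathbf{1}^\top\xb}^2$, which closes the argument.

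The computation is entirely routine; the only points deserving attention are the row-wise decoupling in the first step and, in the second step, correctly accounting for the factor $m$ that arises because the matrices involved have $m$ identical rows, together with the standard fact (invoked without proof) that $\proximal_{\eta,r}$ is non-expansive whenever $r$ is convex.
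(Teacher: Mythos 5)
Your proposal is correct and follows essentially the same route as the paper's own proof: row-wise decoupling of the aggregated proximal objective for the equality, then the factor-$m$ reduction to a single-row comparison, Jensen's inequality, and non-expansiveness of $\proximal_{\eta,r}$ for the inequality. No gaps.
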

The iterative inequality of consensus errors is stated in the next lemma:
\begin{lemma}
    \label{lem:consensus_error_mgt}
    For the general \texttt{PMGT-VR} framework, the consensus errors satisfy:
    \begin{equation}
    \label{eq:xxx}
        \begin{aligned}
    \frac{1}{m}\norm{\xb^{t+1} - \mathbf{1}\bbx^{t+1}}^2 \le&
	8\rho^2 \frac{1}{m}\norm{\xb^{t}-\mathbf{1}\bbx^{t}}^2\\
	&+
	8\rho^2 \frac{\eta^2}{m} \norm{\bs^t-\mathbf{1}\bbs^{t}}^2,\\
	\frac{\eta^2}{m} \norm{\bs^{t+1} - \mathbf{1}\bbs^{t+1}}^2
	\le&
	2\rho^2\frac{\eta^2}{m}\norm{\bs^t - \mathbf{1} \bbs^t}^2\\
	&+
	2\rho^2\frac{\eta^2}{m} \norm{\vb^{t+1} - \vb^t}^2,
        \end{aligned}
    \end{equation}
    where $\eta$ is a constant stepsize and $\rho=(1-\sqrt{1-\lambda_2(W)})^K.$
\end{lemma}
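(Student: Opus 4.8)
The plan is to track how one iteration of the algorithm transforms the two consensus errors, using the FastMix contraction from Lemma~\ref{lem:fastmix} together with the non-expansiveness of the aggregated proximal mapping from Lemma~\ref{lem:prox}. For the $\xb$-recursion, recall that $\xb^{t+1} = \mathrm{FastMix}(\proximal_{\eta m,R}(\xb^t - \eta\bs^t), K)$, and that FastMix preserves the average, so $\bbx^{t+1} = \frac{1}{m}\mathbf{1}^\top\proximal_{\eta m,R}(\xb^t-\eta\bs^t)$. Applying \eqref{eq:fastmix} I would get
\begin{equation*}
\norm{\xb^{t+1}-\mathbf{1}\bbx^{t+1}} \le \rho\,\norm{\proximal_{\eta m,R}(\xb^t-\eta\bs^t) - \tfrac{1}{m}\mathbf{1}\mathbf{1}^\top\proximal_{\eta m,R}(\xb^t-\eta\bs^t)}.
\end{equation*}
First I would insert $\pm\,\tfrac{1}{m}\mathbf{1}\mathbf{1}^\top\proximal_{\eta m,R}(\tfrac{1}{m}\mathbf{1}\mathbf{1}^\top(\xb^t-\eta\bs^t))$ and use the triangle inequality, splitting into a "proximal of a consensual point is consensual" piece (which forces the second term into the form appearing in \eqref{eq:prox_diff}) and a cross piece. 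The cross piece is bounded via \eqref{eq:prox_diff} by $\norm{(\xb^t-\eta\bs^t) - \tfrac{1}{m}\mathbf{1}\mathbf{1}^\top(\xb^t-\eta\bs^t)}$, and I would then expand $\xb^t-\eta\bs^t$ and bound this by $\norm{\xb^t-\mathbf{1}\bbx^t} + \eta\norm{\bs^t-\mathbf{1}\bbs^t}$. Collecting terms, squaring, and applying $(a+b)^2\le 2a^2+2b^2$ a couple of times (to absorb the constants generated by the several triangle-inequality splits) yields the $8\rho^2$ constant in the first inequality. The extra factor relative to a naive $2\rho^2$ comes precisely from having to split off the "consensual proximal" term and from the $\pm\eta\bs^t$ decomposition.

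For the $\bs$-recursion the argument is simpler and does not involve the proximal map at all. Since $\bs^{t+1} = \mathrm{FastMix}(\bs^t + \vb^{t+1} - \vb^t, K)$ and FastMix preserves the average, I would apply \eqref{eq:fastmix} directly to the input $\bs^t + \vb^{t+1}-\vb^t$ to get $\norm{\bs^{t+1}-\mathbf{1}\bbs^{t+1}} \le \rho\,\norm{(\bs^t + \vb^{t+1}-\vb^t) - \tfrac{1}{m}\mathbf{1}\mathbf{1}^\top(\bs^t+\vb^{t+1}-\vb^t)}$. Splitting the right side by the triangle inequality into the $\bs^t$-part and the $(\vb^{t+1}-\vb^t)$-part, noting that centering can only decrease the norm of the latter (so $\norm{(I-\tfrac{1}{m}\mathbf{1}\mathbf{1}^\top)(\vb^{t+1}-\vb^t)}\le\norm{\vb^{t+1}-\vb^t}$), squaring, and using $(a+b)^2\le 2a^2+2b^2$ gives the second inequality with constant $2\rho^2$. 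Multiplying through by $\eta^2/m$ throughout and recalling $\rho = (1-\sqrt{1-\lambda_2(W)})^K$ matches the stated form exactly.

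The main obstacle is the bookkeeping in the $\xb$-recursion: one must be careful that the term $\tfrac{1}{m}\mathbf{1}\mathbf{1}^\top\proximal_{\eta m,R}(\tfrac{1}{m}\mathbf{1}\mathbf{1}^\top(\xb^t-\eta\bs^t))$ is genuinely what appears after centering $\proximal_{\eta m,R}$ of a consensual input (which is where \eqref{eq:local proximal equality} is implicitly used — the aggregated prox acts row-wise, so a consensual input produces a consensual output), and that the intermediate $\pm$ insertions are chosen so every resulting piece is controllable either by \eqref{eq:prox_diff} or trivially by centering-is-nonexpansive. Everything else is routine: triangle inequality, $(a+b)^2 \le 2a^2 + 2b^2$, and the FastMix contraction. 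No expectation is taken here; the inequality is deterministic in the iterates and the randomness only enters later when $\norm{\vb^{t+1}-\vb^t}$ is bounded.
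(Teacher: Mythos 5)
Your proposal is correct and follows essentially the same route as the paper's proof: FastMix contraction plus average preservation, then for the $\xb$-recursion the insertion of the proximal map evaluated at the consensual point $\mathbf{1}(\bbx^t-\eta\bbs^t)$ so that one piece is handled by non-expansiveness and the other by Eqn.~\eqref{eq:prox_diff}, and finally $(a+b)^2\le 2a^2+2b^2$ to produce the $8\rho^2$ and $2\rho^2$ constants. The $\bs$-recursion argument (centering is non-expansive on $\vb^{t+1}-\vb^t$) also matches the paper exactly.
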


The linear system inequality is obtained once we can bound the $\norm{\vb^{t+1} - \vb^t}^2$ for a specific variance reduction gradient estimator. In what follows, the definitions of $\vb^t$ and the gradient learning quantity $\Delta^t$ depend on the algorithms we are considering.

\begin{lemma} \label{lem:local_gradient_estimator}
     For both \texttt{PMGT-SAGA} and \texttt{PMGT-LSVRG}, it holds that
    \begin{equation}
    	\begin{aligned}
    	&\EE \left[\frac{1}{m}\norm{\vb^{t+1} - \vb^t}\right]\\
    	\le& \left(8\rho^2 + 1\right)\frac{8L^2}{m}\norm{\xb^t - \mathbf{1}\bbx^t}^2+
		\frac{64\rho^2\eta^2 L^2}{m}\norm{\bs^t - \mathbf{1}\bbs^t}^2
		\\
		&+
		8L^2\norm{\bbx^{t+1} -x^*}^2 +4\Delta^{t+1}
		+
		8L^2\norm{\bbx^t -x^*}^2 + 4\Delta^t.
    	\end{aligned}
    \end{equation}
\end{lemma}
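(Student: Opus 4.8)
The plan is to control $\EE[\tfrac1m\norm{\vb^{t+1}-\vb^{t}}^2]$ by comparing each local estimator to the \emph{per-agent} reference $\nabla f_i(x^*)$ — crucially not a common reference vector, since the residuals $\nabla f_i(x^*)-\nabla f(x^*)$ encode the data dissimilarity and would not vanish along the iteration. I would start from
\[
\tfrac1m\norm{\vb^{t+1}-\vb^{t}}^2=\tfrac1m\sum_{i=1}^m\norm{\vb_i^{t+1}-\vb_i^{t}}^2\le \tfrac2m\sum_{i=1}^m\norm{\vb_i^{t+1}-\nabla f_i(x^*)}^2+\tfrac2m\sum_{i=1}^m\norm{\vb_i^{t}-\nabla f_i(x^*)}^2,
\]
so it suffices to bound $\tfrac1m\sum_i\EE\norm{\vb_i^{\tau}-\nabla f_i(x^*)}^2$ for $\tau\in\{t,t+1\}$.

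Second, I would establish the uniform estimate
\[
\tfrac1m\sum_{i=1}^m\EE\!\left[\norm{\vb_i^\tau-\nabla f_i(x^*)}^2\,\middle|\,\mathcal{F}^\tau\right]\le \tfrac{4L^2}{m}\norm{\xb^\tau-\mathbf{1}\bbx^\tau}^2+4L^2\norm{\bbx^\tau-x^*}^2+2\Delta^\tau
\]
for \texttt{PMGT-SAGA} and \texttt{PMGT-LSVRG} simultaneously. Write $\vb_i^\tau-\nabla f_i(x^*)=A_i^\tau+B_i^\tau$ with $A_i^\tau:=\nabla f_{i,j_i}(\xb_i^\tau)-\nabla f_{i,j_i}(x^*)$, where $B_i^\tau$ collects the control-variate part (for \texttt{PMGT-SAGA}, the table average $\tfrac1n\sum_j\nabla f_{i,j}(\phi_{i,j}^\tau)$ minus $\nabla f_{i,j_i}(\phi_{i,j_i}^\tau)$, shifted by $\nabla f_i(x^*)$; for \texttt{PMGT-LSVRG}, the analogous expression with $\wb_i^\tau$ in place of $\phi_{i,j_i}^\tau$). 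Since $j_i$ is uniform on $\{1,\dots,n\}$ conditional on $\mathcal{F}^\tau$, one has $\EE[B_i^\tau\mid\mathcal{F}^\tau]=0$ (the unbiasedness behind Lemma~\ref{lem:gradient_tracking_mgt}), hence $\EE\norm{A_i^\tau+B_i^\tau}^2\le 2\EE\norm{A_i^\tau}^2+2\EE\norm{B_i^\tau}^2$ with $\EE\norm{B_i^\tau}^2\le \EE\norm{\nabla f_{i,j_i}(y_i^\tau)-\nabla f_{i,j_i}(x^*)}^2$ for the relevant snapshot $y_i^\tau$. By $L$-Lipschitzness of the $\nabla f_{i,j}$ (Assumption~\ref{ass:lsmooth}), $\EE\norm{A_i^\tau}^2=\tfrac1n\sum_j\norm{\nabla f_{i,j}(\xb_i^\tau)-\nabla f_{i,j}(x^*)}^2\le L^2\norm{\xb_i^\tau-x^*}^2$; averaging the $B$-term over $i$ gives exactly $\Delta^\tau$ by \eqref{eq:gradient_learning_dsaga}/\eqref{eq:gradient_learning_dlsvrg}; and $\norm{\xb_i^\tau-x^*}^2\le 2\norm{\xb_i^\tau-\bbx^\tau}^2+2\norm{\bbx^\tau-x^*}^2$ summed over $i$ yields the display.

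Third, I would assemble: apply the estimate at $\tau=t$ directly, and at $\tau=t+1$ note that $\tfrac1m\norm{\xb^{t+1}-\mathbf{1}\bbx^{t+1}}^2$ is not a target quantity, so replace it via Lemma~\ref{lem:consensus_error_mgt}, $\tfrac1m\norm{\xb^{t+1}-\mathbf{1}\bbx^{t+1}}^2\le 8\rho^2\tfrac1m\norm{\xb^{t}-\mathbf{1}\bbx^{t}}^2+8\rho^2\tfrac{\eta^2}{m}\norm{\bs^{t}-\mathbf{1}\bbs^{t}}^2$. Substituting both into the first inequality and collecting terms, the coefficient of $\tfrac1m\norm{\xb^t-\mathbf{1}\bbx^t}^2$ becomes $2\bigl(4L^2\cdot 8\rho^2+4L^2\bigr)=8(8\rho^2+1)L^2$, the coefficient of $\tfrac{\eta^2}{m}\norm{\bs^t-\mathbf{1}\bbs^t}^2$ becomes $64\rho^2\eta^2L^2$, and $\norm{\bbx^{t+1}-x^*}^2,\Delta^{t+1},\norm{\bbx^t-x^*}^2,\Delta^t$ pick up coefficients $8L^2,4,8L^2,4$ — exactly the claimed inequality. (Since $\vb^{t+1}$ uses the fresh indices of iteration $t+1$ and $\vb^{t}$ those of iteration $t$, the conditional expectations are handled by the tower rule, conditioning successively on $\mathcal{F}^{t+1}$ then $\mathcal{F}^t$; equivalently one takes full expectations throughout.)

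There is no deep obstacle here; the only real care is (i) the \texttt{PMGT-SAGA} index bookkeeping — the table at the start of iteration $\tau$ holds $\{\phi_{i,j}^\tau\}$ while the freshly written entry is $\phi_{i,j_i}^{\tau+1}=\xb_i^\tau$, which is what makes the variance term align with $\Delta^\tau$ rather than $\Delta^{\tau+1}$ — and (ii) the choice of $\nabla f_i(x^*)$ (not a common vector) as the reference, without which the dissimilarity bias reappears. A minor looseness is that the crude $L$-Lipschitz bound is used instead of co-coercivity, which keeps the two algorithms' analyses identical at the price of slightly larger constants.
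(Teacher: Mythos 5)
Your proposal is correct and follows essentially the same route as the paper: the same decomposition of $\vb^{t+1}-\vb^t$ through the per-agent references $\nabla f_i(x^*)$, the same mean-zero/variance bound on the control-variate part (the paper's Lemma on $\EE\|\vb_i^t-\nabla f_i(x^*)\|^2$), the same $L$-Lipschitz bound yielding $2L^2\|\bbx^\tau-x^*\|^2+\tfrac{2L^2}{m}\|\xb^\tau-\mathbf{1}\bbx^\tau\|^2$, and the same final substitution of the consensus-error recursion to eliminate the $t{+}1$ consensus term; the coefficient bookkeeping matches exactly. (As you implicitly assume, the quantity being bounded is the squared norm $\tfrac1m\|\vb^{t+1}-\vb^t\|^2$; the missing square in the lemma statement is a typo in the paper.)
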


Substituting $\norm{\vb^{t+1}-\vb^t}$ in lemma \ref{lem:consensus_error_mgt}, the desired linear system inequality is stated in the next lemma.

\begin{lemma}
	\label{lem:consensus_error}
	Using the definition of $\zb^t$ in Eqn.~\eqref{def:consensus}, then for both \texttt{PMGT-SAGA} and \texttt{PMGT-LSVRG}, we have
	\begin{equation}
	\label{eq:Mat}
	\begin{aligned}
	&\EE
	\left[\zb^{t+1}\right]\\
	\le&
	2\rho^2\cdot
	\bigg(
	\begin{bmatrix}
		4, & 4\\
	8(8\rho^2 + 1)L^2\eta^2, & 64\rho^2\eta^2L^2 + 1
	\end{bmatrix}
	\cdot
	\zb^t
	\\&+\eta^2
	\begin{bmatrix}
	0\\
	8L^2(\norm{\bbx^{t+1} -x^*}^2+\norm{\bbx^t - x^*}^2) +4(\Delta^{t+1}+\Delta^t)
	\end{bmatrix}
	\bigg).
	\end{aligned}
	\end{equation}
\end{lemma}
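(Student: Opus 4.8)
The plan is to combine the iterative inequality for the consensus errors from Lemma~\ref{lem:consensus_error_mgt} with the bound on $\frac{1}{m}\norm{\vb^{t+1}-\vb^t}$ from Lemma~\ref{lem:local_gradient_estimator}, then repackage everything in the matrix–vector form of Eqn.~\eqref{eq:Mat}. First I would recall the two scalar inequalities of Lemma~\ref{lem:consensus_error_mgt}: the bound on $\frac{1}{m}\norm{\xb^{t+1}-\mathbf{1}\bbx^{t+1}}^2$ already involves only the two coordinates of $\zb^t$, so after dividing through by the appropriate powers of $\eta$ it contributes the first row $\bigl[8\rho^2,\,8\rho^2\bigr]$ of the coefficient matrix (written as $2\rho^2\cdot[4,\,4]$). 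For the second coordinate $\frac{\eta^2}{m}\norm{\bs^{t+1}-\mathbf{1}\bbs^{t+1}}^2$, Lemma~\ref{lem:consensus_error_mgt} gives $2\rho^2\frac{\eta^2}{m}\norm{\bs^t-\mathbf{1}\bbs^t}^2 + 2\rho^2\frac{\eta^2}{m}\norm{\vb^{t+1}-\vb^t}^2$, so the work is entirely in substituting the Lemma~\ref{lem:local_gradient_estimator} estimate into the second term.

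The key step is the bookkeeping of this substitution. Taking expectations and inserting the bound from Lemma~\ref{lem:local_gradient_estimator}, the term $2\rho^2\frac{\eta^2}{m}\EE\norm{\vb^{t+1}-\vb^t}^2$ becomes
\begin{equation*}
2\rho^2\Bigl(8(8\rho^2+1)L^2\eta^2\cdot\tfrac{1}{m}\norm{\xb^t-\mathbf{1}\bbx^t}^2 + 64\rho^2 L^2\eta^2\cdot\tfrac{\eta^2}{m}\norm{\bs^t-\mathbf{1}\bbs^t}^2 + \eta^2\bigl(8L^2(\norm{\bbx^{t+1}-x^*}^2+\norm{\bbx^t-x^*}^2)+4(\Delta^{t+1}+\Delta^t)\bigr)\Bigr),
\end{equation*}
where I have used $\frac{\eta^2}{m}\norm{\bs^t-\mathbf{1}\bbs^t}^2$ as the natural second coordinate of $\zb^t$ and kept the extra $\eta^2$ factor explicit so the coefficients match Eqn.~\eqref{eq:Mat}. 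Adding the $2\rho^2\frac{\eta^2}{m}\norm{\bs^t-\mathbf{1}\bbs^t}^2$ term produces the "$+1$" in the $(2,2)$ entry $64\rho^2\eta^2L^2+1$, and the coefficient of $\frac{1}{m}\norm{\xb^t-\mathbf{1}\bbx^t}^2$ is exactly $8(8\rho^2+1)L^2\eta^2$ as displayed in the $(2,1)$ entry. Collecting the two rows and factoring out the common $2\rho^2$ yields precisely the stated linear system inequality, with the inhomogeneous term living only in the second coordinate because the consensus error of $\xb$ does not see $\norm{\vb^{t+1}-\vb^t}$.

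I do not expect any serious obstacle here; the lemma is essentially an assembly step. The one place to be careful is the normalization: Lemma~\ref{lem:local_gradient_estimator} is stated for $\EE[\frac{1}{m}\norm{\vb^{t+1}-\vb^t}]$ (which one reads as $\frac{1}{m}\norm{\cdot}^2$ given the right-hand side), and one must track the $\eta^2$ weights consistently so that the first coordinate of $\zb^t$ is $\frac{1}{m}\norm{\xb^t-\mathbf{1}\bbx^t}^2$ while the second is $\frac{\eta^2}{m}\norm{\bs^t-\mathbf{1}\bbs^t}^2$; a factor-of-$\eta^2$ slip would corrupt the $(2,1)$ entry. Everything else is substitution and rearrangement.
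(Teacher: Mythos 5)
Your proposal is correct and follows essentially the same route as the paper: the first row of the coefficient matrix is read off directly from the first inequality of Lemma~\ref{lem:consensus_error_mgt}, and the second row is obtained by substituting the bound of Lemma~\ref{lem:local_gradient_estimator} (which, as you note, should read $\EE[\frac{1}{m}\norm{\vb^{t+1}-\vb^t}^2]$) into the second inequality and multiplying through by $2\rho^2\eta^2$, which produces the $(2,1)$ entry $8(8\rho^2+1)L^2\eta^2$, the ``$+1$'' in the $(2,2)$ entry, and the inhomogeneous term in the second coordinate only. Your bookkeeping of the $\eta^2$ weights matches the paper's computation exactly.
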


We can see that with a small enough $\rho$ (corresponding to a large enough $K$), the Frobenius norm of the coefficient matrix is small than $1$ and so is the spectral radius.

\subsection{A unified analysis framework}\label{section:centralized_framework}
Following the standard analysis framework of single-machine algorithms based on \texttt{SGD} in \cite{unifiedframework2019peter}, we state the following two lemmas.

\begin{lemma}\label{lem:gradient_convergence}
	For both \texttt{PMGT-SAGA} and \texttt{PMGT-LSVRG}, it holds that
	\begin{equation}
	\label{eq:Var_}
	\begin{aligned}
	&\EE\left[\norm{\bbs_t - \nabla f(x^*)}^2\right]\\
	\le&
	8L \cdot D_f(\bbx^t, x^*)
	+
	2\Delta^t
	+
	\frac{4L^2}{m}\norm{\xb^t - \mathbf{1}\bbx^t}^2.
	\end{aligned}
	\end{equation}
\end{lemma}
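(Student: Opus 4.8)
The plan is to bound $\EE\norm{\bbs^t-\nabla f(x^*)}^2$ by a bias--variance decomposition of the averaged estimator, controlling the bias by the consensus error plus the distance of $\bbx^t$ to $x^*$, and the variance by the variance-reduction mechanism. Let $\mathcal{F}^t$ denote the history before the fresh indices $j_1,\dots,j_m$ are drawn at iteration $t$. By Lemma~\ref{lem:gradient_tracking_mgt}, $\bbs^t=\bbv^t=\frac1m\sum_{i=1}^m\vb_i^t$, its conditional mean is $\gb^t:=\EE[\bbs^t\mid\mathcal{F}^t]=\frac1m\sum_{i=1}^m\nabla f_i(\xb_i^t)$, and $\norm{\nabla f(\bbx^t)-\gb^t}\le\frac{L}{\sqrt m}\norm{\xb^t-\mathbf{1}\bbx^t}$. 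Since the $j_i$ are sampled independently across agents, the deviations $\vb_i^t-\nabla f_i(\xb_i^t)$ are conditionally independent and zero-mean, so
\[\EE\norm{\bbs^t-\nabla f(x^*)}^2=\EE\norm{\gb^t-\nabla f(x^*)}^2+\frac1{m^2}\sum_{i=1}^m\EE\norm{\vb_i^t-\nabla f_i(\xb_i^t)}^2,\]
and I would bound the two terms separately.

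For the bias I would use $\norm{\gb^t-\nabla f(x^*)}^2\le 2\norm{\gb^t-\nabla f(\bbx^t)}^2+2\norm{\nabla f(\bbx^t)-\nabla f(x^*)}^2$: the first summand is at most $\frac{2L^2}{m}\norm{\xb^t-\mathbf{1}\bbx^t}^2$ by Lemma~\ref{lem:gradient_tracking_mgt}, and the second is at most $4L\,D_f(\bbx^t,x^*)$ by the co-coercivity inequality~\eqref{eq:str_cvx} applied to $f$, which is $L$-smooth and (being $\mu$-strongly convex) convex.

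For the variance, the key observation is that for \emph{both} estimators one can write $\vb_i^t-\nabla f_i(\xb_i^t)=\bigl(\nabla f_{i,j_i}(\xb_i^t)-\nabla f_{i,j_i}(\psi_i^t)\bigr)-\EE_{j_i}\bigl[\nabla f_{i,j_i}(\xb_i^t)-\nabla f_{i,j_i}(\psi_i^t)\bigr]$, where $\psi_i^t$ stands for $\phi_{i,j_i}^t$ in \texttt{PMGT-SAGA} and for $\wb_i^t$ in \texttt{PMGT-LSVRG}. Using $\EE\norm{X-\EE X}^2\le\EE\norm{X}^2$ and then splitting around $\nabla f_{i,j}(x^*)$ gives $\EE_{j_i}\norm{\vb_i^t-\nabla f_i(\xb_i^t)}^2\le\frac2n\sum_{j=1}^n\bigl(\norm{\nabla f_{i,j}(\xb_i^t)-\nabla f_{i,j}(x^*)}^2+\norm{\nabla f_{i,j}(\psi_{i,j}^t)-\nabla f_{i,j}(x^*)}^2\bigr)$. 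Averaging over $i$, the $\psi$-terms collapse exactly to $\Delta^t$ by~\eqref{eq:gradient_learning_dsaga}/\eqref{eq:gradient_learning_dlsvrg}; for the $\xb_i^t$-terms I would insert $\nabla f_{i,j}(\bbx^t)$, use $L$-smoothness to extract $L^2\norm{\xb_i^t-\bbx^t}^2$, and use~\eqref{eq:str_cvx} together with the identity $\frac1{mn}\sum_{i,j}D_{f_{i,j}}(\bbx^t,x^*)=D_f(\bbx^t,x^*)$ to bound the remainder by $2L\,D_f(\bbx^t,x^*)$. Carrying the $\frac1{m^2}$ prefactor through, the variance term is $\le\frac{4L^2}{m^2}\norm{\xb^t-\mathbf{1}\bbx^t}^2+\frac{8L}{m}D_f(\bbx^t,x^*)+\frac2m\Delta^t$. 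Adding the bias bound and using $m\ge 2$ (so that $\frac{2L^2}{m}+\frac{4L^2}{m^2}\le\frac{4L^2}{m}$ and $4L+\frac{8L}{m}\le 8L$) yields~\eqref{eq:Var_}; for $m=1$ the consensus error vanishes and the same chain without the factor-$2$ splits already yields $\le 6L\,D_f(\bbx^t,x^*)+2\Delta^t$.

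The computation is mostly bookkeeping, but two points need care. First,~\eqref{eq:str_cvx} must be applied at the level of the individual $f_{i,j}$ and only afterwards averaged, because in the sum-of-nonconvex regime one cannot invoke convexity of a single $f_i$ or $f_{i,j}$; it is only the average $\frac1{mn}\sum_{i,j}D_{f_{i,j}}(\bbx^t,x^*)=D_f(\bbx^t,x^*)$ that strong convexity of $f$ guarantees to be nonnegative. Second, the $1/m$ variance-reduction factor has to be retained through the aggregation step, since a naive bound would make the constants blow up with $m$; keeping it is exactly what lets the decentralized estimate match the shape of the centralized one.
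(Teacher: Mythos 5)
Your proposal is correct, but it takes a genuinely different route from the paper. The paper's proof is a two-line composition: it bounds $\EE\norm{\bbs^t-\nabla f(x^*)}^2=\EE\|\frac1m\sum_i(\vb_i^t-\nabla f_i(x^*))\|^2$ by Jensen's inequality as $\frac1m\sum_i\EE\norm{\vb_i^t-\nabla f_i(x^*)}^2$, then invokes Lemma~\ref{lem:decomposition_v} (the decomposition around $\nabla f_{i,j}(x^*)$, Eqns.~\eqref{eq:v_var_dsaga}/\eqref{eq:v_var_dlsvrg}) and finally Eqn.~\eqref{eq:bnd_1}; no independence across agents is used and no bias--variance split is performed. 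You instead center each $\vb_i^t$ at $\nabla f_i(\xb_i^t)$, exploit the conditional independence of the sampled indices $j_1,\dots,j_m$ to kill the cross terms, and thereby pick up an extra factor $1/m$ on the variance contribution. This is a strictly sharper estimate (your intermediate bound has $\frac{8L}{m}D_f+\frac2m\Delta^t$ where the paper's has $4LD_f+2\Delta^t$ hidden inside Lemma~\ref{lem:decomposition_v}), at the price of needing the sampling-independence assumption, the $m\ge 2$ bookkeeping to land on the stated constants, and the separate $m=1$ remark. Since the lemma is only ever used with the fixed constants $8L$, $2$, $4L^2/m$, the extra sharpness is not exploited downstream, so both proofs are equally adequate for Theorem~\ref{th:convergence_rate}. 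Two minor points: your use of co-coercivity~\eqref{eq:str_cvx} on the individual $f_{i,j}$ matches exactly how the paper itself uses it (in the proof of Eqn.~\eqref{eq:bnd_1}), so you inherit rather than introduce the paper's tension between~\eqref{eq:str_cvx} and the advertised sum-of-nonconvex setting; and your closing remark about nonnegativity of the averaged Bregman divergence is not actually needed anywhere in the chain, since every application of~\eqref{eq:str_cvx} is at the level of a single $f_{i,j}$ before averaging.
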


\begin{lemma}\label{lem:gradient_learning_quantity}
	For \texttt{PMGT-SAGA} and \texttt{PMGT-LSVRG} with $p=\frac{1}{n}$, it holds that
	\begin{equation}
	\label{eq:Delta}
	\begin{aligned}
	&\EE\left[\Delta^{t+1}\right]\\
	\le&
	\left(1 - \frac{1}{n}\right)\Delta^t 
	+
	\frac{4L}{n} D_f(\bbx^t, x^*) 
	+
	\frac{2L^2}{mn}\norm{\xb^t - \mathbf{1}\bbx^t}^2.
	\end{aligned}
	\end{equation}
\end{lemma}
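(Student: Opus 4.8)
The plan is to take the expectation of $\Delta^{t+1}$ over the sampling performed at iteration $t$ (that is, conditioned on the history through $\xb^t$, so that $\xb_i^t$, $\phi_{i,j}^t$ and $\wb_i^t$ are held fixed) and to exploit the fact that in both algorithms each ``reference point'' is refreshed with probability exactly $1/n$ and is otherwise left unchanged. For \texttt{PMGT-SAGA}, fixing $i$ and $j$ and using that $j_i$ is uniform on $\{1,\dots,n\}$, one has $\phi_{i,j}^{t+1}=\xb_i^t$ with probability $1/n$ and $\phi_{i,j}^{t+1}=\phi_{i,j}^t$ otherwise, so that
\begin{equation*}
\EE\norm{\nabla f_{i,j}(\phi_{i,j}^{t+1})-\nabla f_{i,j}(x^*)}^2
= \tfrac1n\norm{\nabla f_{i,j}(\xb_i^t)-\nabla f_{i,j}(x^*)}^2 + \left(1-\tfrac1n\right)\norm{\nabla f_{i,j}(\phi_{i,j}^t)-\nabla f_{i,j}(x^*)}^2 .
\end{equation*}
For \texttt{PMGT-LSVRG} with $p=1/n$ the same identity holds with $\wb_i^{t+1}$ in place of $\phi_{i,j}^{t+1}$ and $\wb_i^t$ in place of $\phi_{i,j}^t$, because $\wb_i^{t+1}=\xb_i^t$ with probability $1/n$ and $\wb_i^{t+1}=\wb_i^t$ otherwise. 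Averaging over $i\in\{1,\dots,m\}$ and $j\in\{1,\dots,n\}$ and invoking the definitions \eqref{eq:gradient_learning_dsaga} and \eqref{eq:gradient_learning_dlsvrg} of $\Delta^t$, both cases reduce to the single recursion
\begin{equation*}
\EE\left[\Delta^{t+1}\right] = \left(1-\tfrac1n\right)\Delta^t + \tfrac1n\cdot\tfrac1{mn}\sum_{i,j=1}^{m,n}\norm{\nabla f_{i,j}(\xb_i^t)-\nabla f_{i,j}(x^*)}^2 .
\end{equation*}

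It then remains to bound the residual average $S^t \triangleq \frac1{mn}\sum_{i,j}\norm{\nabla f_{i,j}(\xb_i^t)-\nabla f_{i,j}(x^*)}^2$. The plan is to insert the mean iterate $\bbx^t$ and split via $\norm{a+b}^2\le 2\norm{a}^2+2\norm{b}^2$ into a consensus term and an optimality term. For the consensus term, $L$-smoothness (Assumption~\ref{ass:lsmooth}) gives $\norm{\nabla f_{i,j}(\xb_i^t)-\nabla f_{i,j}(\bbx^t)}^2\le L^2\norm{\xb_i^t-\bbx^t}^2$, and summing produces $\frac{2L^2}{m}\norm{\xb^t-\mathbf{1}\bbx^t}^2$. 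For the optimality term, I would apply inequality \eqref{eq:str_cvx} with $x=\bbx^t$ and $y=x^*$ to each $f_{i,j}$, obtaining $\norm{\nabla f_{i,j}(\bbx^t)-\nabla f_{i,j}(x^*)}^2\le 2L\bigl(f_{i,j}(\bbx^t)-f_{i,j}(x^*)-\dotprod{\nabla f_{i,j}(x^*),\,\bbx^t-x^*}\bigr)$; averaging over $i,j$ and using $\frac1{mn}\sum_{i,j}f_{i,j}=f$ and $\frac1{mn}\sum_{i,j}\nabla f_{i,j}(x^*)=\nabla f(x^*)$ collapses this to $4L\,D_f(\bbx^t,x^*)$. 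Hence $S^t\le \frac{2L^2}{m}\norm{\xb^t-\mathbf{1}\bbx^t}^2+4L\,D_f(\bbx^t,x^*)$; substituting into the recursion above and taking total expectation yields exactly the claimed inequality.

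The computation is essentially routine, so I do not expect a genuine obstacle; the only point requiring care is the expectation bookkeeping — being explicit that $\xb_i^t$, $\phi_{i,j}^t$ and $\wb_i^t$ are treated as constants when the sampling at step $t$ is averaged out, and noticing that it is precisely the independent per-agent refreshing with probability $1/n$ that makes \texttt{PMGT-SAGA} and \texttt{PMGT-LSVRG} satisfy the \emph{same} one-step inequality, so the two need not be analyzed separately beyond the first display. The one deliberate choice is to route the estimate through $\bbx^t$ so as to peel off the consensus error, and to use the co-coercivity-type bound \eqref{eq:str_cvx} rather than the cruder $\norm{\nabla f_{i,j}(\bbx^t)-\nabla f_{i,j}(x^*)}^2\le L^2\norm{\bbx^t-x^*}^2$, since the sharper Bregman-divergence term $D_f(\bbx^t,x^*)$ is what later cancels against the descent term in the Lyapunov analysis.
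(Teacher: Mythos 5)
Your proposal is correct and follows essentially the same route as the paper: the same per-pair refresh-probability identity giving the $(1-\tfrac1n)\Delta^t + \tfrac1n S^t$ recursion, and the same bound on $S^t$ obtained by inserting $\bbx^t$, using $L$-smoothness for the consensus part and the co-coercivity bound \eqref{eq:str_cvx} for the Bregman part (which is exactly the paper's auxiliary inequality \eqref{eq:bnd_1}). The only cosmetic difference is that the paper treats \texttt{PMGT-SAGA} and \texttt{PMGT-LSVRG} in two separate displays while you unify them upfront; since only the marginal refresh probability $1/n$ per pair $(i,j)$ enters via linearity of expectation, this unification is legitimate.
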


With the above lemmas in hand, we are ready to prove the following result involving Lyapunov function.
\begin{lemma}\label{lem:V_dec}
    Setting $\eta = \frac{1}{12L}$ and setting $p=\frac{1}{n}$ for \texttt{PMGT-LSVRG}, for both \texttt{PMGT-SAGA} and \texttt{PMGT-LSVRG}, it holds that
	\begin{equation}
	\label{eq:V_dec}
	\begin{aligned}
	&\EE\left[V^{t+1}\right]\\
	\le&
	\max\left(1-\frac{1}{12\kappa}, 1 - \frac{1}{2n}\right)\cdot V^t
	\\
	&+\frac{3\sqrt{V^t}}{\sqrt{m}}(\norm{\xb^t - \mathbf{1}\bbx^t}+\eta \norm{\bs^t-\mathbf{1}\bar{\bs}^t})
	\\    
	&+\frac{3.3}{m}\left\|\mathbf{x}^{t}-\mathbf{1} \bar{x}^{t}\right\|^{2}+\frac{2.5\eta^2}{m}\|\mathbf{s}^t-\mathbf{1}\bar{\mathbf{s}}^t\|^2.
	\end{aligned}
	\end{equation}
\end{lemma}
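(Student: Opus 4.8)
The plan is to reduce one decentralized iteration to a \emph{perturbed centralized proximal-gradient step} and then run the single-machine variance-reduction argument of \cite{unifiedframework2019peter}. Introduce the virtual centralized iterate $\hat{x}^{t+1} := \proximal_{\eta,r}\bigl(\bbx^t - \eta\bbs^t\bigr)$, i.e.\ the step a centralized algorithm would take at the averaged point $\bbx^t$ with the averaged gradient tracker $\bbs^t$. Since \texttt{FastMix} preserves the row average (Lemma~\ref{lem:fastmix}) and $\proximal_{\eta m,R}$ acts row-wise (Eqn.~\eqref{eq:local proximal equality}), applying the non-expansiveness inequality \eqref{eq:prox_diff} of Lemma~\ref{lem:prox} with argument $\yb^t := \xb^t - \eta\bs^t$ yields
\[
\norm{\bar{x}^{t+1} - \hat{x}^{t+1}} \;\le\; \tfrac{1}{\sqrt{m}}\bigl(\norm{\xb^t - \mathbf{1}\bbx^t} + \eta\norm{\bs^t - \mathbf{1}\bbs^t}\bigr),
\]
so $\bar{x}^{t+1}$ equals the centralized step up to a perturbation controlled by the two consensus errors. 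Then $\norm{\bar{x}^{t+1}-x^*}^2 \le \norm{\hat{x}^{t+1}-x^*}^2 + 2\norm{\hat{x}^{t+1}-x^*}\,\norm{\bar{x}^{t+1}-\hat{x}^{t+1}} + \norm{\bar{x}^{t+1}-\hat{x}^{t+1}}^2$ separates a genuinely centralized term from the cross and quadratic consensus contributions that supply the last two lines of \eqref{eq:V_dec}.

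For the centralized term I would use the fixed-point identity $x^* = \proximal_{\eta,r}\bigl(x^* - \eta\nabla f(x^*)\bigr)$ and non-expansiveness of $\proximal_{\eta,r}$ to obtain
\[
\norm{\hat{x}^{t+1}-x^*}^2 \le \norm{\bbx^t-x^*}^2 - 2\eta\dotprod{\bbs^t-\nabla f(x^*),\,\bbx^t-x^*} + \eta^2\norm{\bbs^t-\nabla f(x^*)}^2 .
\]
Taking the conditional expectation over the iteration-$t$ sampling and invoking Lemma~\ref{lem:gradient_tracking_mgt} ($\EE[\bbs^t]=\tfrac1m\sum_i\nabla f_i(\xb_i^t)$ and the bound \eqref{eq:var_s}), the inner product splits into the ``true-gradient'' part $\dotprod{\nabla f(\bbx^t)-\nabla f(x^*),\bbx^t-x^*}\ge D_f(\bbx^t,x^*)+\tfrac{\mu}{2}\norm{\bbx^t-x^*}^2$ (Assumption~\ref{ass:strongcon}) and a ``gradient-tracking bias'' part bounded by $\tfrac{L}{\sqrt m}\norm{\xb^t-\mathbf1\bbx^t}\,\norm{\bbx^t-x^*}\le\tfrac{L}{\sqrt m}\norm{\xb^t-\mathbf1\bbx^t}\sqrt{V^t}$. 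The residual $\eta^2\EE\norm{\bbs^t-\nabla f(x^*)}^2$ is handled by Lemma~\ref{lem:gradient_convergence} (producing $8L\eta^2 D_f(\bbx^t,x^*)$, $2\eta^2\Delta^t$ and a consensus term), and the $4n\eta^2\EE[\Delta^{t+1}]$ part of $V^{t+1}$ is expanded by Lemma~\ref{lem:gradient_learning_quantity} (producing $16L\eta^2 D_f(\bbx^t,x^*)$, $4n\eta^2(1-\tfrac1n)\Delta^t$ and a consensus term).

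It then remains to add everything and read off the constants with $\eta=\tfrac{1}{12L}$. The coefficient of $\norm{\bbx^t-x^*}^2$ is $1-\eta\mu=1-\tfrac{1}{12\kappa}$. The weight $4n\eta^2$ on $\Delta^t$ is chosen so that $4n\eta^2(1-\tfrac1n)\Delta^t=(1-\tfrac1{2n})\,4n\eta^2\Delta^t-2\eta^2\Delta^t$, whose surplus $-2\eta^2\Delta^t$ cancels exactly the $+2\eta^2\Delta^t$ from Lemma~\ref{lem:gradient_convergence}, leaving the factor $1-\tfrac1{2n}$ on $\Delta^t$. The aggregate $D_f(\bbx^t,x^*)$ coefficient is $-2\eta+8L\eta^2+16L\eta^2=2\eta(12L\eta-1)$, which is non-positive for $\eta\le\tfrac1{12L}$ and \emph{vanishes} at $\eta=\tfrac1{12L}$; this is precisely why that stepsize is used. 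All remaining terms involve only $\norm{\xb^t-\mathbf1\bbx^t}$ and $\norm{\bs^t-\mathbf1\bbs^t}$; after bounding $\norm{\bar x^{t+1}-\hat x^{t+1}}^2\le\tfrac2m\norm{\xb^t-\mathbf1\bbx^t}^2+\tfrac{2\eta^2}m\norm{\bs^t-\mathbf1\bbs^t}^2$ and collecting, they sum to at most $\tfrac{3\sqrt{V^t}}{\sqrt m}\bigl(\norm{\xb^t-\mathbf1\bbx^t}+\eta\norm{\bs^t-\mathbf1\bbs^t}\bigr)+\tfrac{3.3}{m}\norm{\xb^t-\mathbf1\bbx^t}^2+\tfrac{2.5\eta^2}{m}\norm{\bs^t-\mathbf1\bbs^t}^2$.

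I expect the main obstacle to be the careful treatment of the cross term $2\norm{\hat x^{t+1}-x^*}\,\norm{\bar x^{t+1}-\hat x^{t+1}}$: it must be bounded \emph{without} generating a second copy of $\eta^2\norm{\bbs^t-\nabla f(x^*)}^2$, which would make the aggregate $D_f$ coefficient strictly positive and destroy the contraction. The fix is to bound $\norm{\hat x^{t+1}-x^*}$ by a constant multiple of $\sqrt{V^t}$ up to lower-order consensus errors --- using non-expansiveness of $\proximal_{\eta,r}$ together with $L$-smoothness and the elementary bound $\Delta^t\le V^t/(4n\eta^2)$, so that $\eta\norm{\bbs^t-\nabla f(x^*)}=O(\sqrt{V^t})$ plus consensus errors --- rather than re-expanding it through the gradient step. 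The rest is constant bookkeeping: checking that the leftover consensus contributions arising from the square expansion, from the bias term, and from Lemmas~\ref{lem:gradient_convergence} and~\ref{lem:gradient_learning_quantity} all fit under the stated coefficients $3$, $3.3$ and $2.5$.
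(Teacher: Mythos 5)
Your proposal matches the paper's proof essentially step for step: the same virtual centralized iterate $\proximal_{\eta,r}(\bbx^t-\eta\bbs^t)$, the same split via Lemma~\ref{lem:prox} into a centralized term $T^2$ plus cross and quadratic consensus contributions, the same use of Lemmas~\ref{lem:gradient_tracking_mgt}, \ref{lem:gradient_convergence}, and~\ref{lem:gradient_learning_quantity}, and the same cancellations (the $D_f$ coefficient $2\eta(12L\eta-1)$ vanishing at $\eta=\tfrac{1}{12L}$, the $2\eta^2\Delta^t$ surplus absorbed into the $1-\tfrac{1}{2n}$ factor). Your handling of the cross term — bounding the centralized distance by $O(\sqrt{V^t})$ plus consensus errors rather than re-expanding the gradient step — is exactly the paper's device of bounding $\sqrt{\EE[T^2]}$ by $\sqrt{\tfrac{13}{12}V^t+\tfrac{1}{6m}\|\xb^t-\mathbf{1}\bbx^t\|^2}$, so the argument is correct and not a different route.
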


\subsection{Proof of Theorem \ref{th:convergence_rate}}
We now invoke the above results to give a detailed proof of Theorem \ref{th:convergence_rate}.
\begin{proof}[Proof of Theorem \ref{th:convergence_rate}] 

We will establish the convergence rate by induction. Since for $t=0$, each agent shares the same $\xb^0_i$ and $\bs^0_i$, we have $\norm{\xb^0-\mathbf{1}\bbx^0} = 0$ and $\norm{\bs^0-\mathbf{1}\bbs^0} = 0$. Therefore, by Eqn.~\eqref{eq:V_dec}, the following inequality holds for $t=1$:  
	\begin{equation}
		\label{eq:V_ass}
		\EE\left[V^{t}\right] 
		\le
		\left(\underbrace{\max\left(1 - \frac{1}{24\kappa}, 1 - \frac{1}{4n}\right)}_{\alpha}\right)^{t} \left(V^0+\norm{\zb^0}\right). 
	\end{equation}
	Now, we assume that the inequality holds when $t \le k$ and are going to prove Eqn.~\eqref{eq:V_ass} holds for $t=k+1$.
	Let us denote 
	\begin{equation}
	\Ab = \begin{bmatrix}
	4, & 4,\\
8(8\rho^2 + 1)L^2\eta^2, & 64\rho^2\eta^2L^2 + 1
	\end{bmatrix}.
	\end{equation}
	By the step size $\eta = \frac{1}{12L}$ and assumption that $\rho \le \frac{1}{41}\min \left(\frac{1}{24 \kappa}, \frac{1}{4 n}\right)$, we can obtain that
	\begin{align}
	    \label{eq:anorm}
		\norm{\Ab} < 6. 
	\end{align}
	By $\norm{A} < 6$, $\alpha > \frac{3}{4}$, and $\rho<\frac{1}{41}$, we can further obtain that $\rho^2 \le \alpha / (4\norm{\Ab})$.
	By the definition of $V^k$ and Eqn.~\eqref{eq:Mat}, we can obtain that 
	\begin{equation}
		\label{eq:ezt}
		\begin{aligned}
			&\EE\left[\norm{\zb^k}\right] 
			\\
			\le& 
			\rho^2/9 
			\cdot
			\sum_{i=1}^k \left(2\rho^2\norm{\Ab}\right)^{k-i}(V^i+V^{i-1})\\
			&+ \left(2\rho^2\norm{\Ab}\right)^k\norm{\zb^0}
			\\
			\overset{\eqref{eq:V_ass}}{\le}&
			\rho^2/9\cdot \sum_{i=1}^k \left(2\rho^2\norm{\Ab}\right)^{k-i} (\alpha^{i} + \alpha^{i-1}) (V^0+\norm{\zb^0})\\
			&+ \left(2\rho^2\norm{\Ab}\right)^k\norm{\zb^0}
			\\
			\le&\frac{\rho^2(\alpha+1)}{9\alpha} \cdot \left(\frac{\alpha}{2}\right)^k\sum_{i=1}^k \left(\frac{1}{2}\right)^{-i}(V^0+\norm{\zb^0})\\
			&+ \left(2\rho^2\norm{\Ab}\right)^k\norm{\zb^0}
			\\
			=& \frac{\rho^2(\alpha+1)}{9\alpha} \cdot \left(\frac{\alpha}{2}\right)^k \cdot (2^{k+1} - 2)(V^0+\norm{\zb^0})\\ 
			&+ \left(2\rho^2\norm{\Ab}\right)^k\norm{\zb^0}
			\\
			\le&
			\left(\frac{4\rho^2 \alpha^{k-1}}{9}+  \left(2\rho^2\norm{\Ab}\right)^k\right) 
			\cdot
			(V^0+\norm{\zb^0}),
		\end{aligned}
	\end{equation}
	where the third inequality is because $\rho^2 \le \alpha / (4\norm{\Ab})$ and the last inequality is because of $\alpha<1$ and $V^0$ is non-negative.
	Furthermore, we can obtain
	\begin{equation}
	\label{eq:vhs}
	\begin{aligned}
     &\mathbb{E}\left[\frac{3.3}{m}\left\|\mathbf{x}^{k}-\mathbf{1} \bar{x}^{k}\right\|^{2} +\frac{2.5\eta^2}{m}\|\mathbf{s}^k-\mathbf{1}\bar{\mathbf{s}}^k\|^2\right]\\
        \le& 6.6	\EE\left[\norm{\zb^k}\right] \\
		\overset{\eqref{eq:ezt}}{\le}&
		\left(2.94\rho^2 \alpha^{k-1}+  6.6\left(2\rho^2\norm{\Ab}\right)^k\right)
			\cdot
			(V^0+\norm{\zb^0})
    \end{aligned}
	\end{equation}
    and
    \begin{equation}
	\label{eq:vhhs}
	\begin{aligned}
    &\mathbb{E}\left[\frac{1}{\sqrt{m}}(\left\|\mathbf{x}^{k}-\mathbf{1} \bar{x}^{k}\right\|+\eta\|\mathbf{s}^k-\mathbf{1}\bar{\mathbf{s}}^k\|)\right]
    \\ 
    \leq& \frac{1}{\sqrt{m}}\left[ \sqrt{\mathbb{E} \|\mathbf{x}^{k}-\mathbf{1} \bar{x}^{k}\|^2}+\sqrt{\mathbb{E}\eta^2\|\mathbf{s}^k-\mathbf{1}\bar{\mathbf{s}}^k\|^2}\right]\\
    \leq& 2\sqrt{\frac{1}{{m}}(\mathbb{E}\|\mathbf{x}^{k}-\mathbf{1} \bar{x}^{k}\|^2+\eta^2\|\mathbf{s}^k-\mathbf{1}\bar{\mathbf{s}}^k\|^2)}\\
    \leq& 2\sqrt{2}\sqrt{\mathbb{E}\|\mathbf{z}^k\|}\\
    		\overset{\eqref{eq:ezt}}{\le}&
     \sqrt{\left(\frac{32 \rho^{2} \alpha^{k-1}}{9}+8\left(2 \rho^{2}\|\mathbf{A}\|\right)^{k}\right) \cdot\left(V^{0}+\norm{\mathbf{z}^{0}}\right)}.
    \end{aligned}
	\end{equation}
    Let $\beta$ denote $\max \left(1-\frac{1}{12 \kappa}, 1-\frac{1}{2 n}\right)$. We are ready to prove the induction inequality:
	\begin{align}
		&\EE\left[V^{k+1}\right] \notag
		\\
		\overset{\eqref{eq:V_dec}}{\le}&
		\max \left(1-\frac{1}{12 \kappa}, 1-\frac{1}{2 n}\right) \cdot V^{k}\notag\\
		&+3\frac{\sqrt{V^{k}}}{ \sqrt{m}}\left(\left\|\mathbf{x}^{k}-\mathbf{1} \bar{x}^{k}\right\|+\left\|\mathbf{s}^{k}-\mathbf{1} \bar{s}^{k}\right\|\right)\notag\\
		&+\frac{3.3}{m}\left\|\mathbf{x}^{k}-\mathbf{1} \bar{x}^{k}\right\|^{2}+\frac{2.5\eta^2}{m}\|\mathbf{s}^k-\mathbf{1}\bar{\mathbf{s}}^k\|^2
		\\
		\overset{\eqref{eq:V_ass},\eqref{eq:vhs},\eqref{eq:vhhs}}{\le}&
        \alpha^k\left(V^0+\|\zb^0\|\right) \notag\\
        &\cdot\left(\right. \beta + 2.94 \rho^2\alpha^{-1} + 6.6\left(2 \rho^{2}\|\mathbf{A}\|\alpha^{-1} \right)^{k} \notag \\
	    &+3\sqrt{\alpha^{-k}(\frac{32 \rho^{2} \alpha^{k-1}}{9}+8\left(2 \rho^{2}\|\mathbf{A}\|\right)^{k})} \left. \right)
		\\
		\le&
		\alpha^k(V^0+\|\zb^0\|) (\beta + 3.92\rho^2 + 6.6(16\rho^2)^k \notag \\
		&+3(\frac{\sqrt{32}\rho\alpha^{-1/2}}{3}+2\sqrt{2}(2\rho^2\|A\| \alpha^{-1})^{\frac{k}{2}}))
		\label{eq:rho_1}
		\\
		\le&
		\alpha^k\left(V^0+\|\zb^0\|\right) (\beta + 0.0239\rho \notag\\
		&+ 0.1072\rho+ \frac{2\sqrt{32}}{\sqrt{3}} \rho + 24\sqrt{2}\rho)
		\label{eq:rho_2}
		\\
		\le&
		\alpha^k\left(V^0+\norm{\zb^0}\right)\left(\max \left(1-\frac{1}{12 \kappa}, 1-\frac{1}{2 n}\right) + 41 \rho\right)
        \notag
		\\
		\le&
		\alpha^{k+1}\left(V^0+\norm{\zb^0}\right).
		\notag
	\end{align}
	Inequality~\eqref{eq:rho_1} is because $\alpha \ge \frac{3}{4}$ and $\sqrt{a^2+b^2}\leq|a|+|b|$. 
	Inequality~\eqref{eq:rho_2} is because the condition~\eqref{eq:rho_cond} implies $\rho < 1/41$ and $\rho^2 \le \alpha / (4\norm{\Ab})$. The last inequality is because of the condition $\rho \le \frac{1}{41}\cdot\min\left(\frac{1}{24\kappa},\frac{1}{4n} \right)$. Thus, Eqn.~\eqref{eq:V_ass} also holds for $t = k+1$ and we complete the proof.
	
	Furthermore, using Eqn.~\eqref{eq:ezt} and $\rho^2 \le \alpha / (4\norm{\Ab})$, we can obtain that
	\begin{equation}
		\begin{aligned}
			&\EE\left[\max\left(\frac{1}{m}\norm{\xb^t - \mathbf{1} \bbx^t}^2, \frac{1}{144mL^2}\norm{\bs^t - \mathbf{1}\bbs^t}^2\right)\right]
			\\
			\le&
			\left(\frac{4 \rho^{2} \alpha^{t-1}}{9}+\left(2 \rho^{2}\|\mathbf{A}\|\right)^{t}\right) \cdot\left(V^{0}+\norm{\mathbf{z}^{0}}\right)
			\\
			\le&
			\left(\frac{1}{45387} \min \left(\frac{1}{36 \kappa^{2}}, \frac{1}{n^{2}}\right)+2^{-t}\right) \cdot\\
			&\max \left(1-\frac{1}{24 \kappa}, 1-\frac{1}{4 n}\right)^{t} \cdot\left( V^{0}+\left\|\mathbf{z}^{0}\right\|\right).
		\end{aligned}
	\end{equation}
\end{proof}
\begin{figure*}[!ht]
\centering
\subfloat{\includegraphics[width=2in]{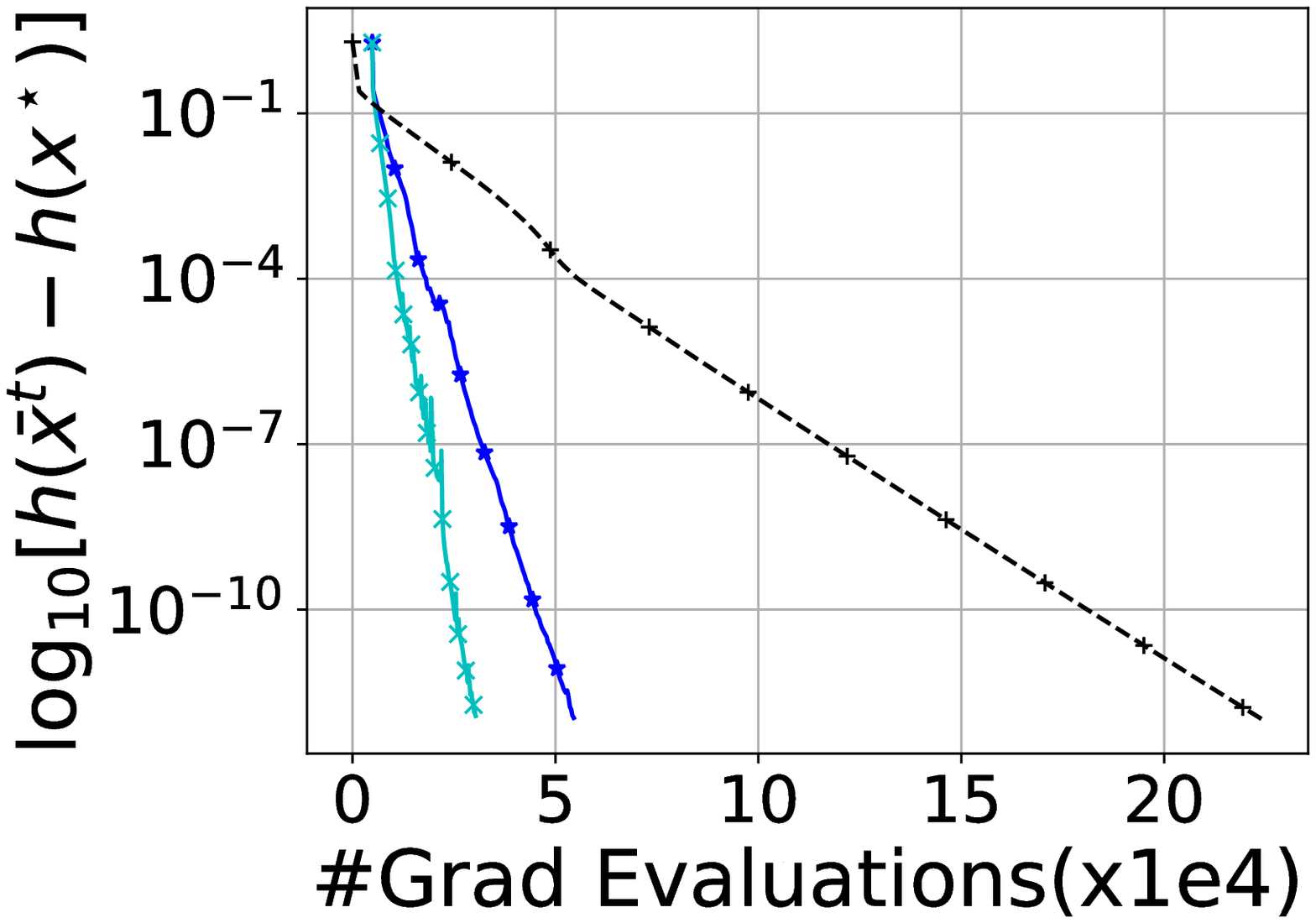}
}
\hfil
\subfloat{\includegraphics[width=2.05in]{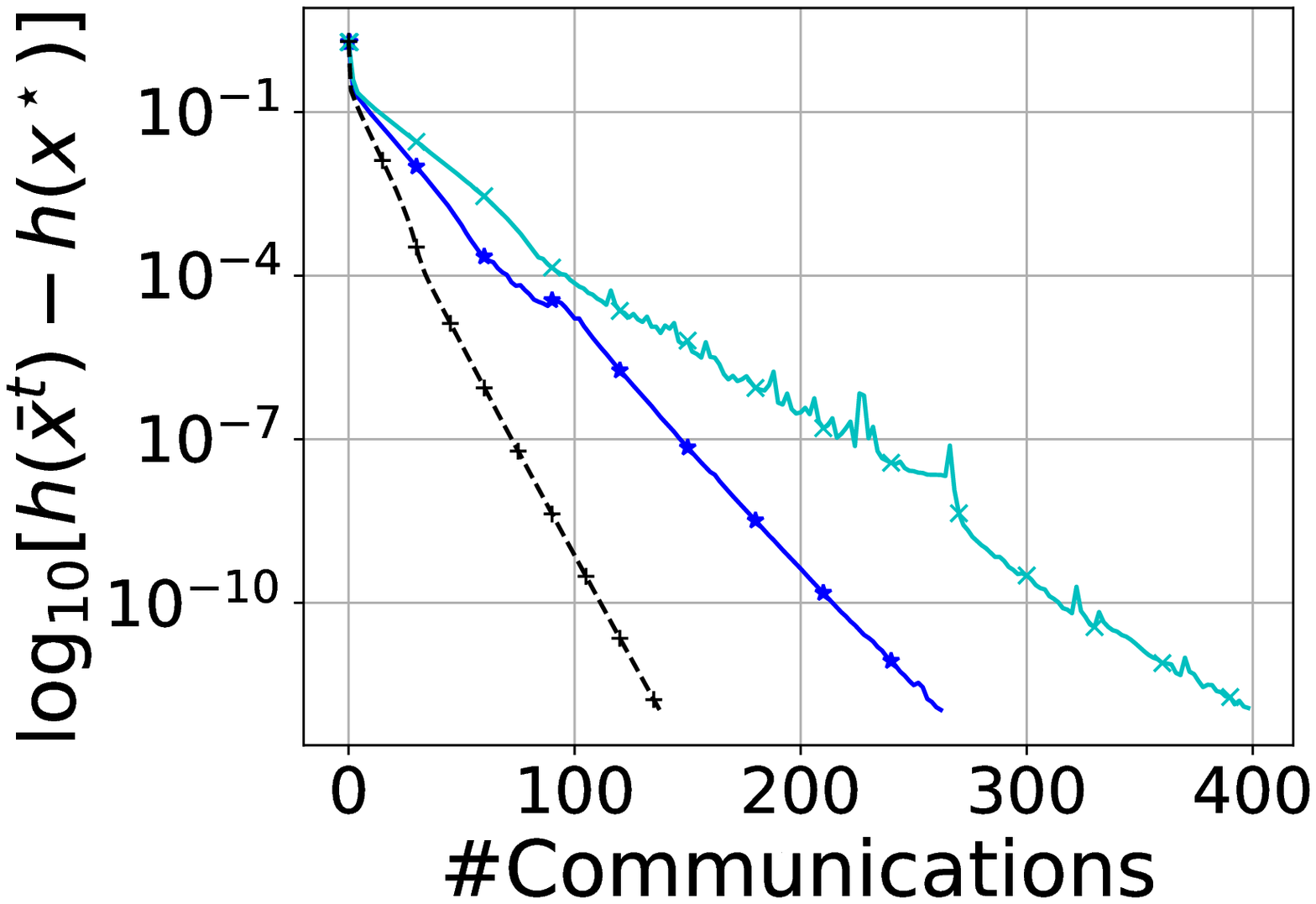}
}
\subfloat{\includegraphics[width=2.01in]{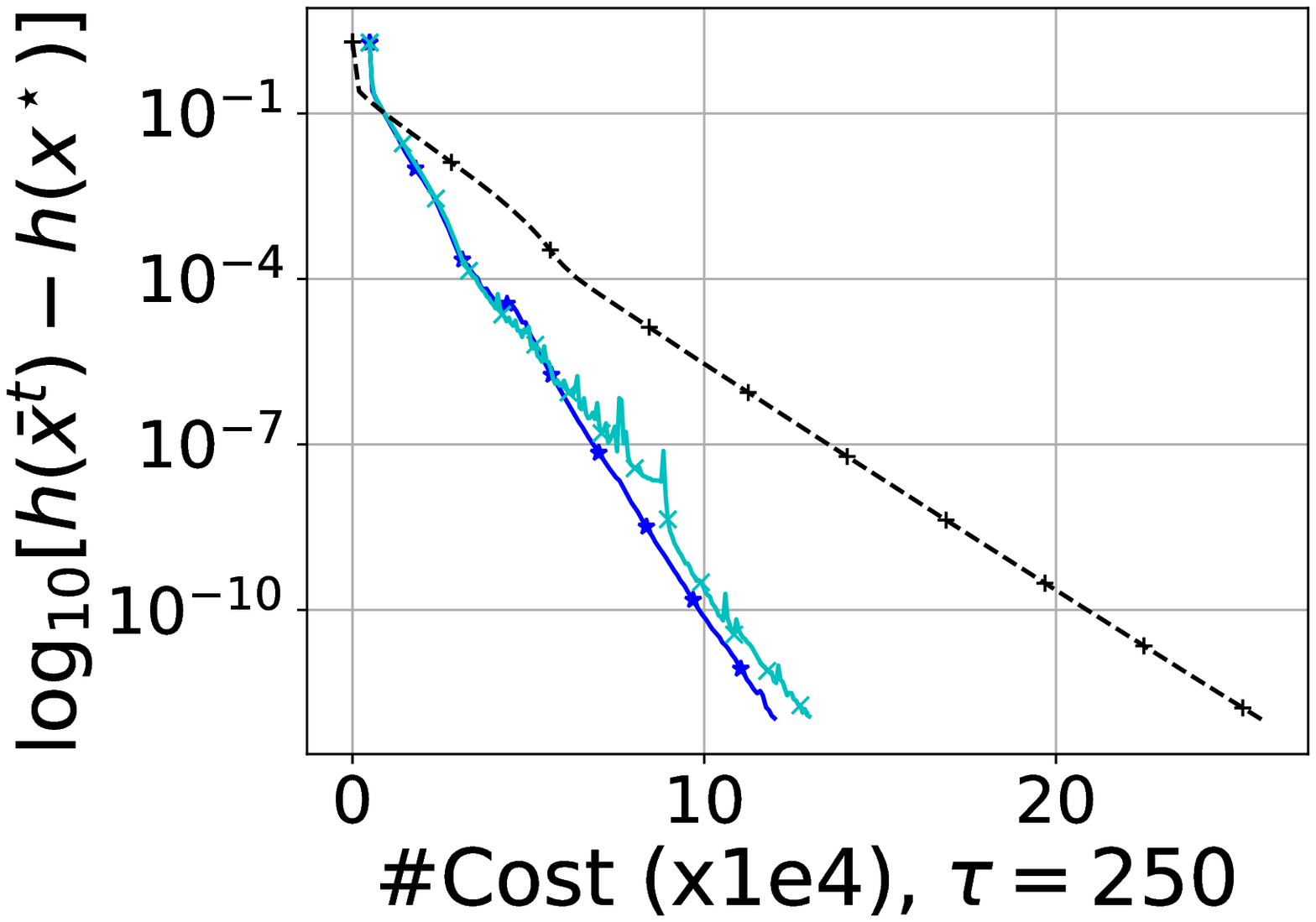}
}
%\caption{$\sigma=10^{-5}n, 1-\lambda_2(W)=0.81$}
%\label{fig:e5_08}
\quad
\subfloat{\includegraphics[width=2in]{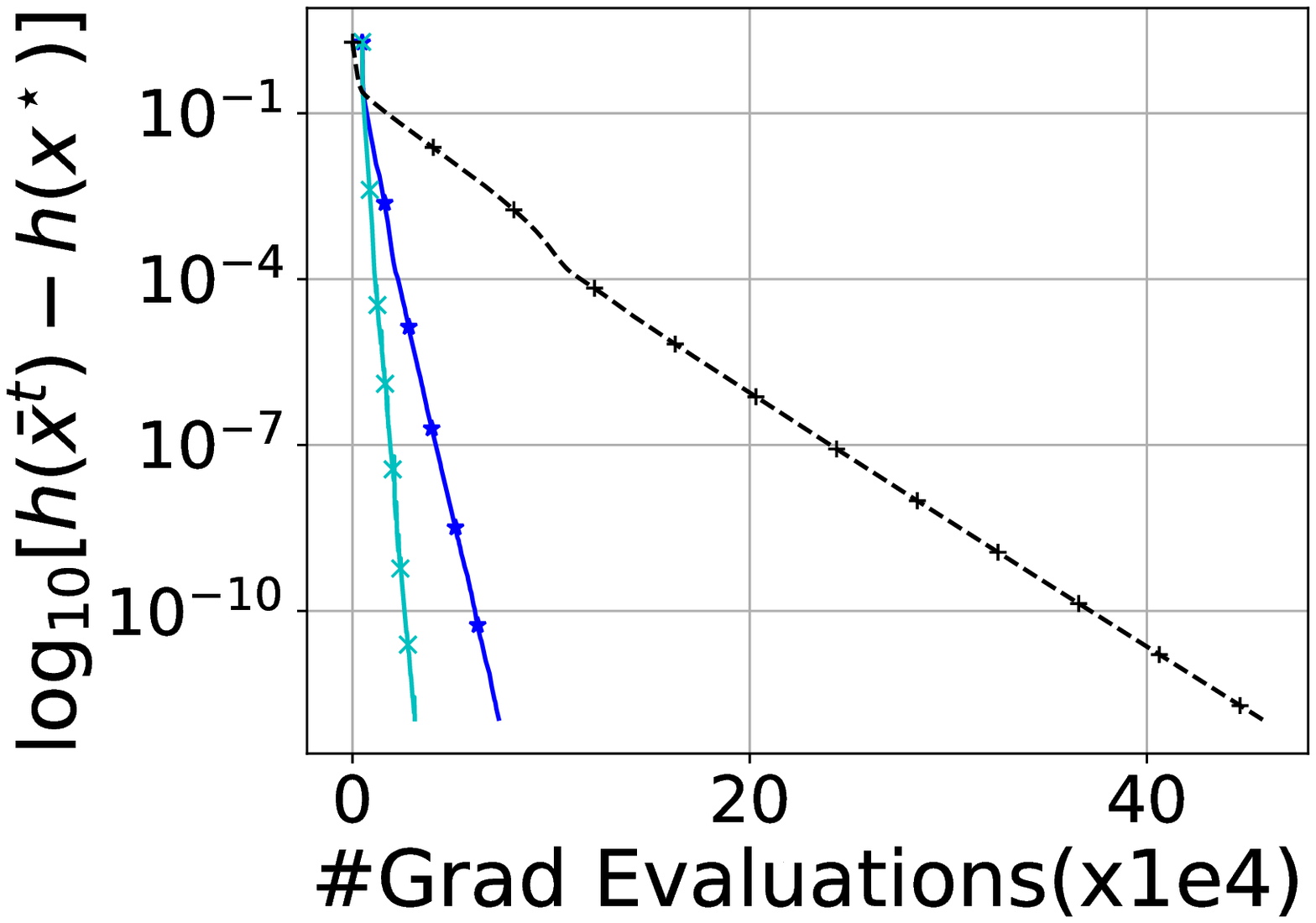}
}
\hfil
\subfloat{\includegraphics[width=2.03in]{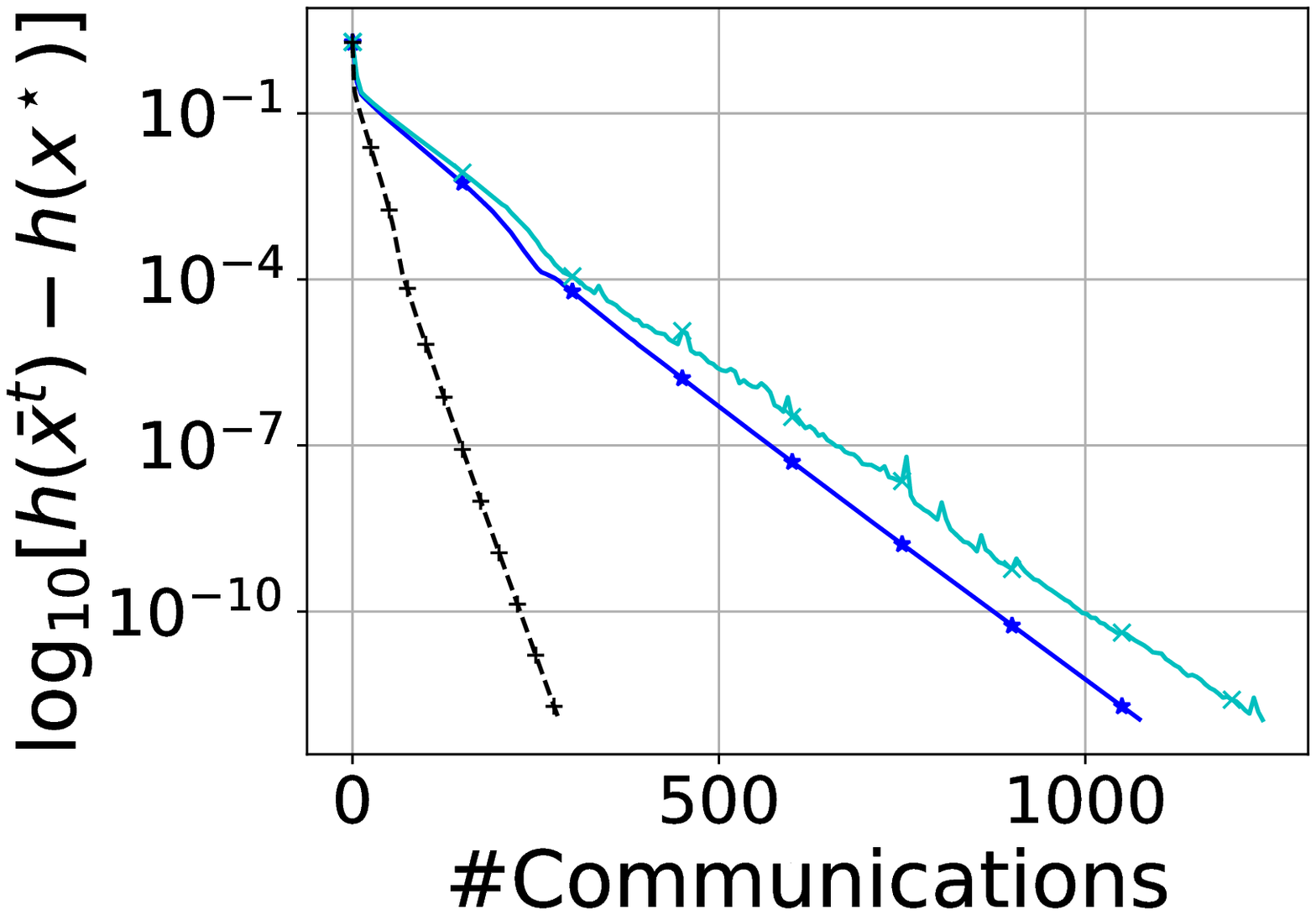}
}
\subfloat{\includegraphics[width=2.01in]{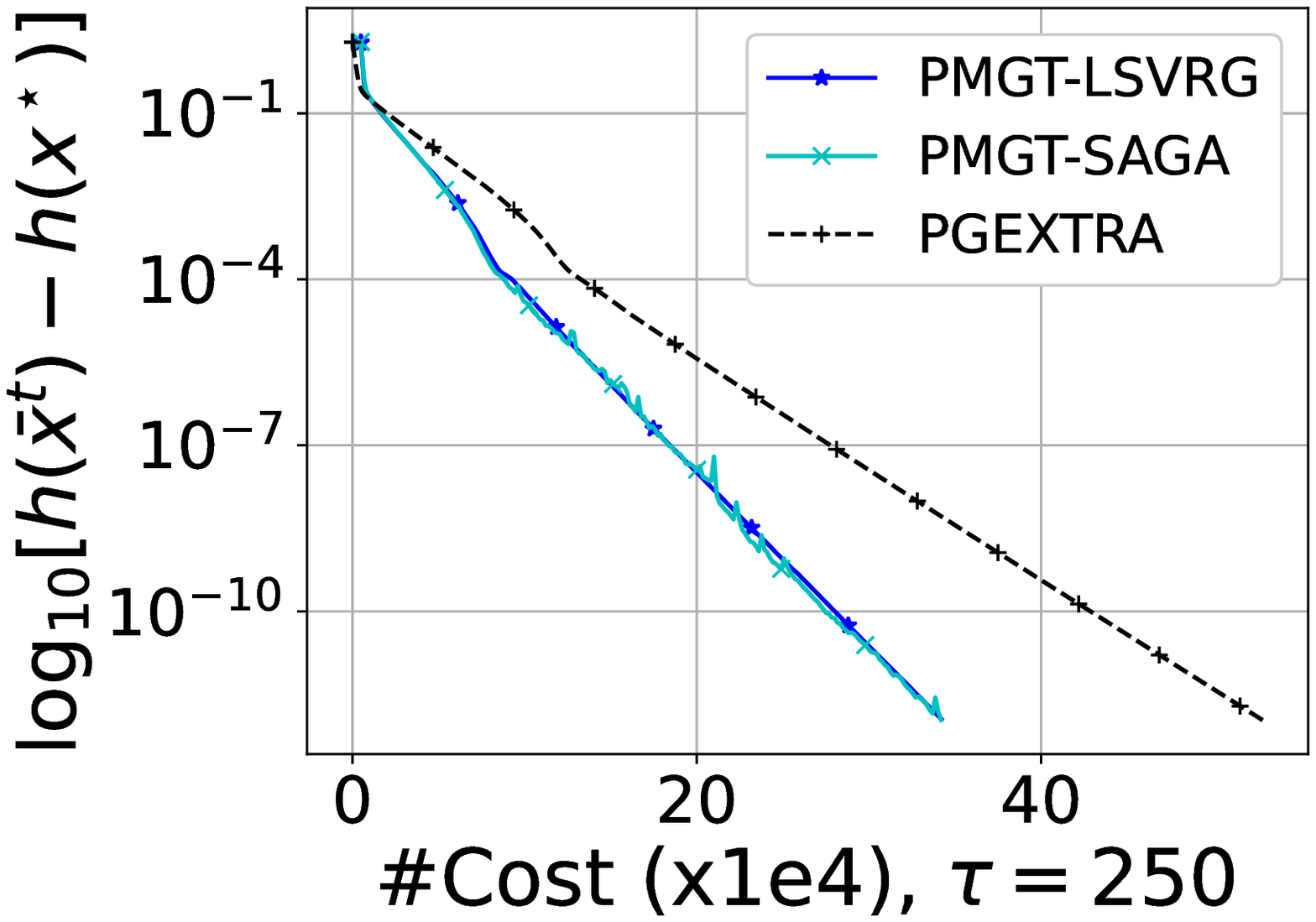}
}
\caption{Performance comparison of \texttt{PMGT-SAGA, PMGT-LSVRG}, and \texttt{PG-EXTRA} with $\sigma=10^{-5}n$. The top row and the bottom row present the results with $1-\lambda_2(W)=0.81$ and $1-\lambda_2(W)=0.05$, respectively.}
\label{fig:e5_005}
\end{figure*}

\begin{figure*}[!ht]
\centering
%\vspace{0.15in}
\subfloat{\includegraphics[width=1.97in]{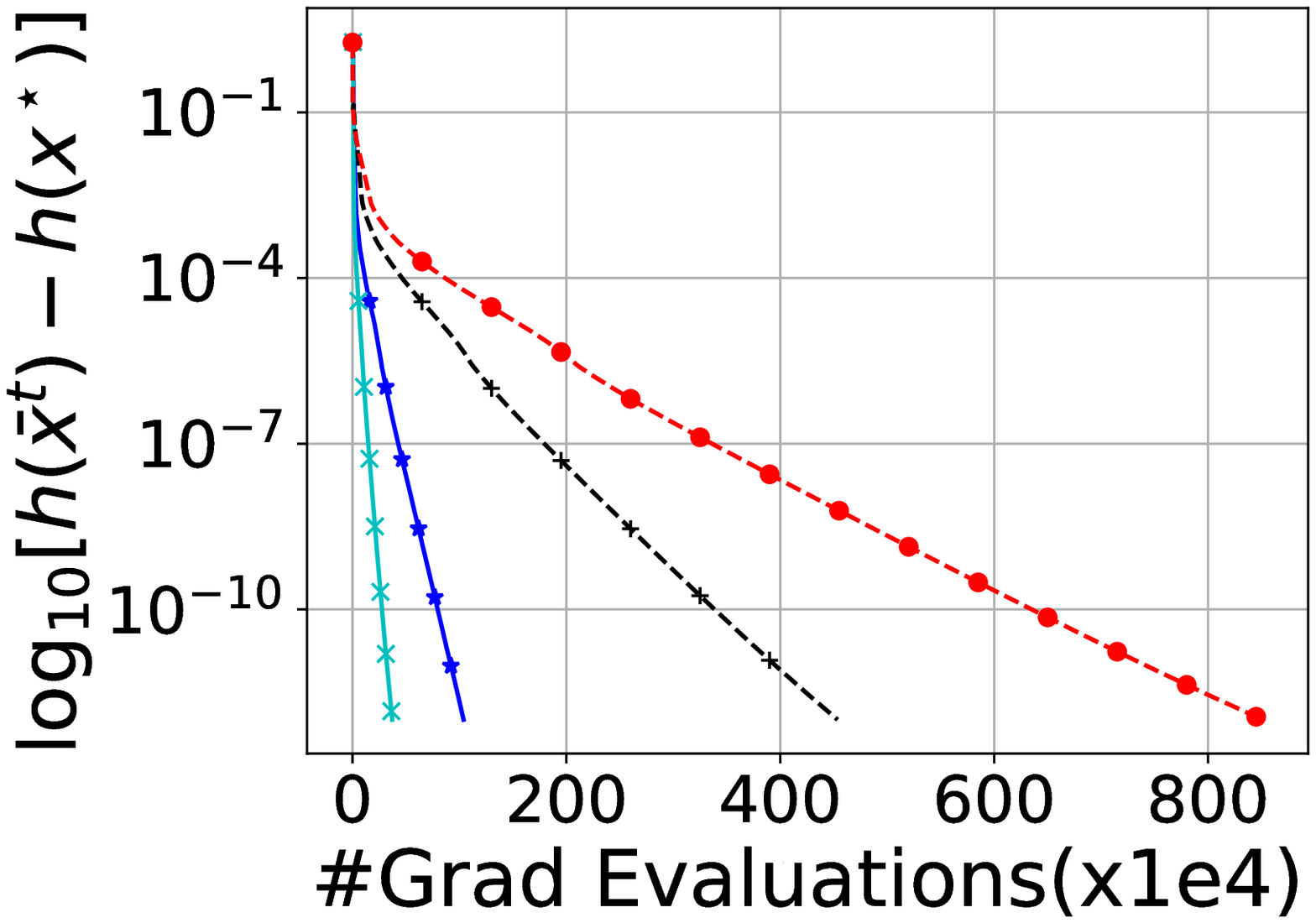}
}
\hfil
\subfloat{\includegraphics[width=2.05in]{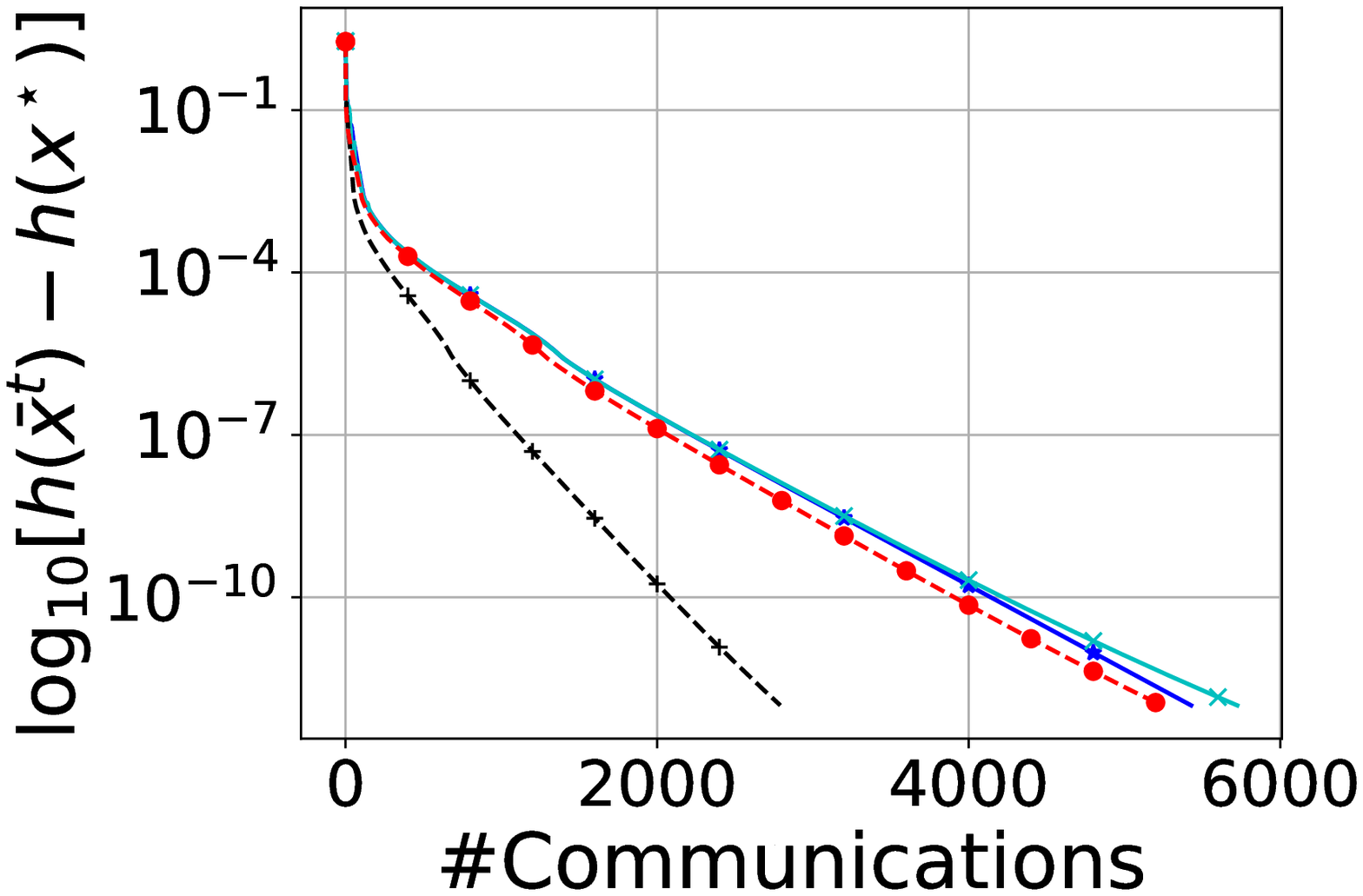}
}
\subfloat{\includegraphics[width=2in]{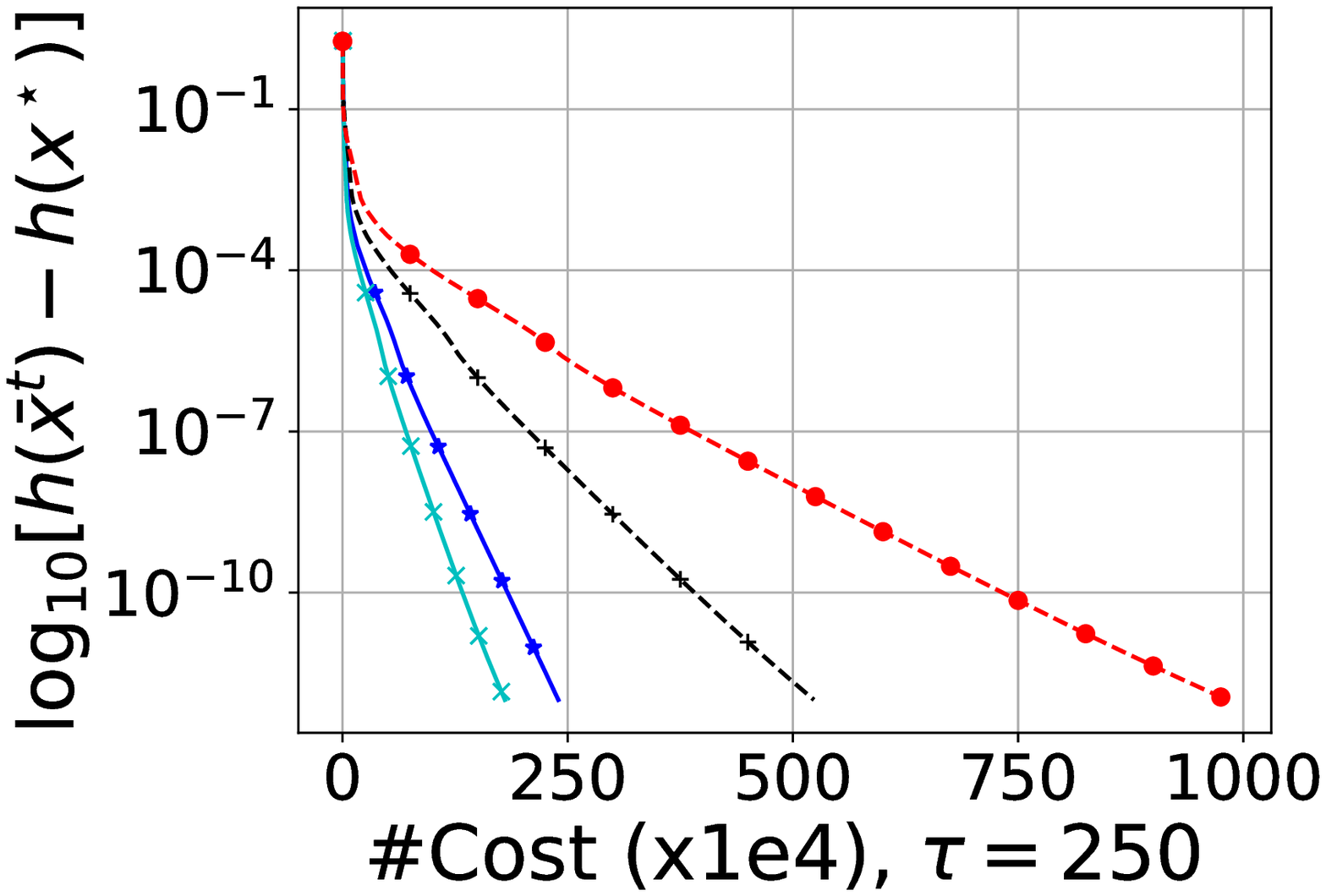}
}
%\caption{$\sigma=10^{-7}n, 1-\lambda_2(W)=0.81$}
%\label{fig:e7_08}
\quad
\subfloat{\includegraphics[width=1.97in]{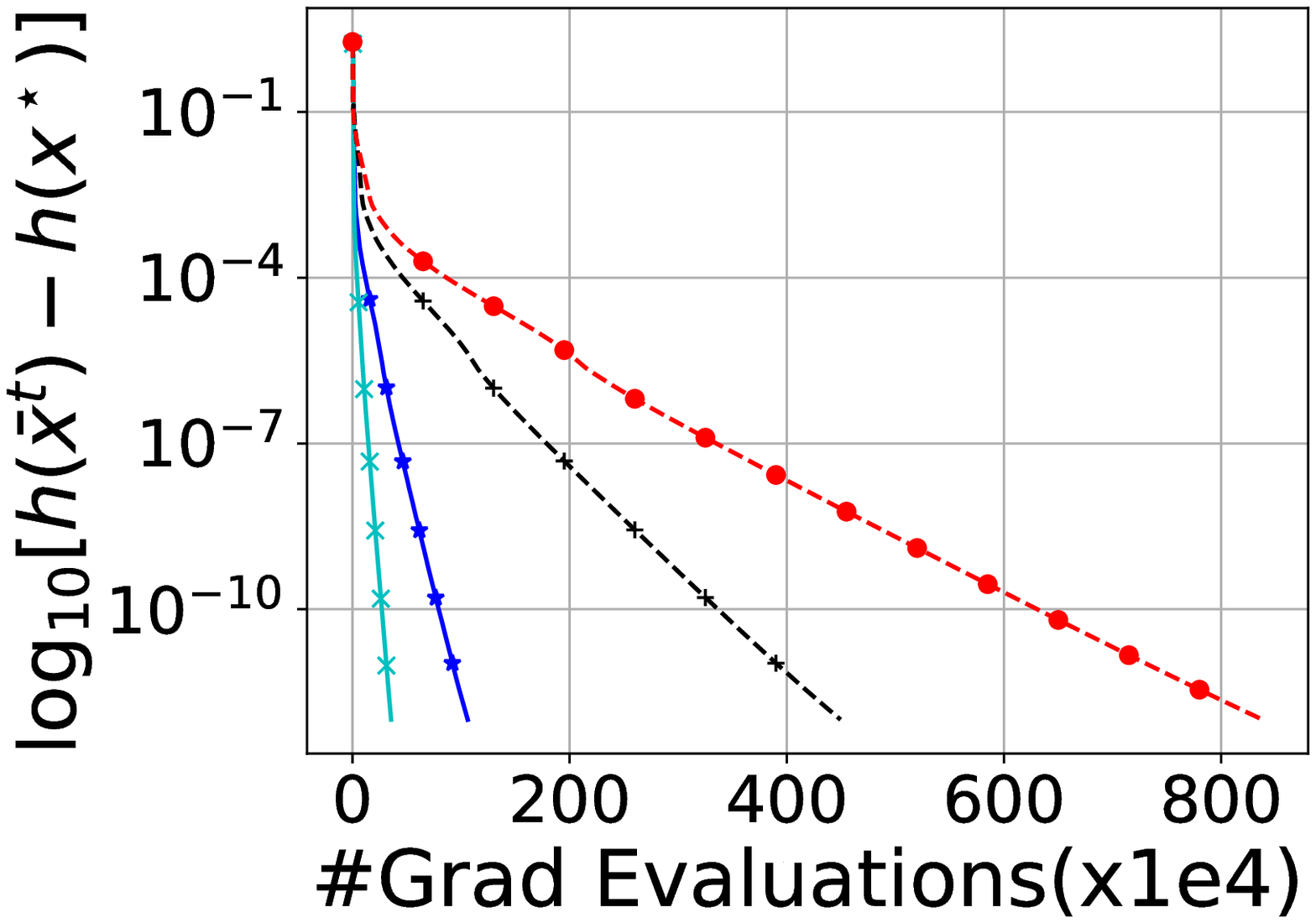}
}
\hfil
\subfloat{\includegraphics[width=2in]{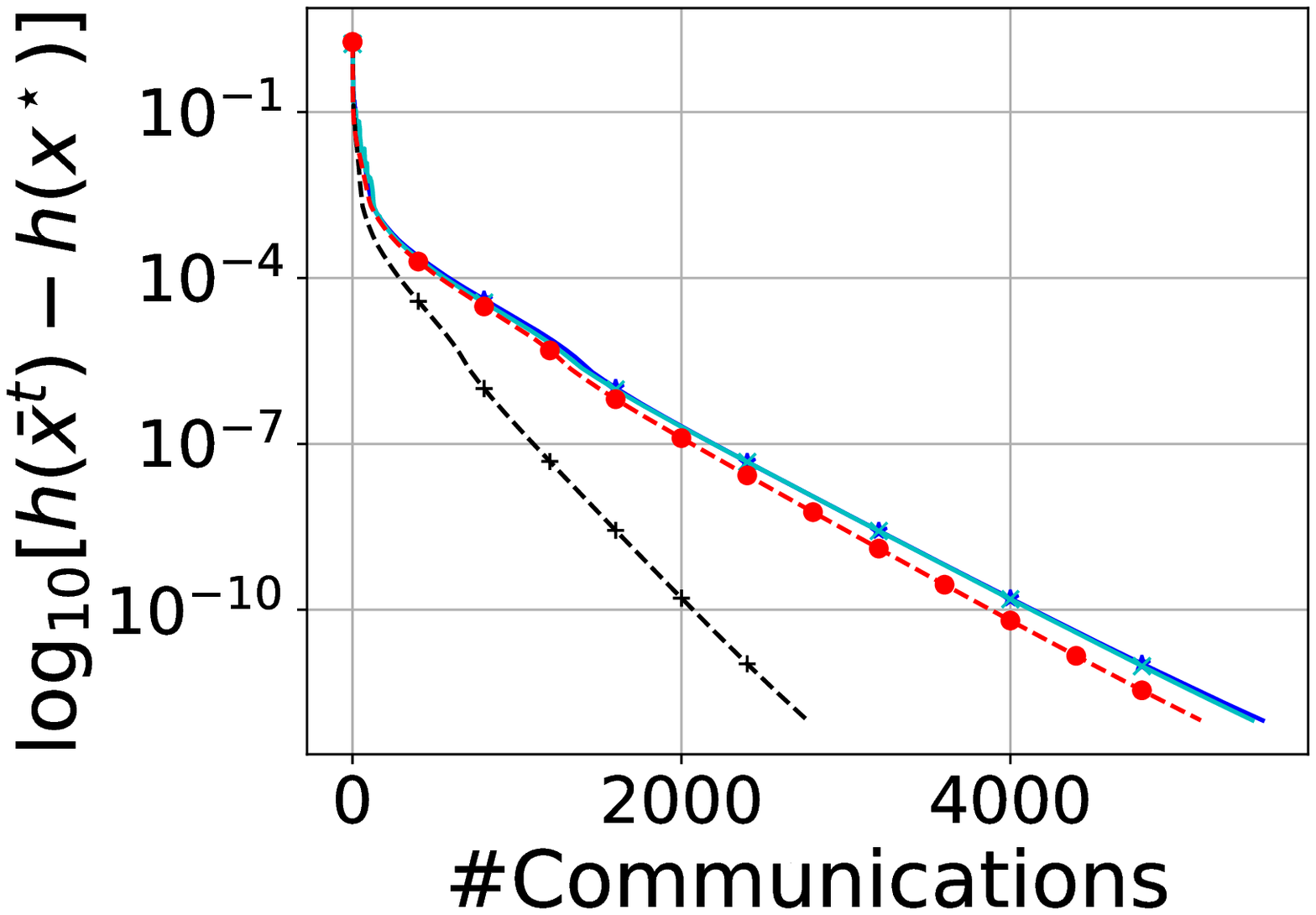}
}
\subfloat{\includegraphics[width=2.08in]{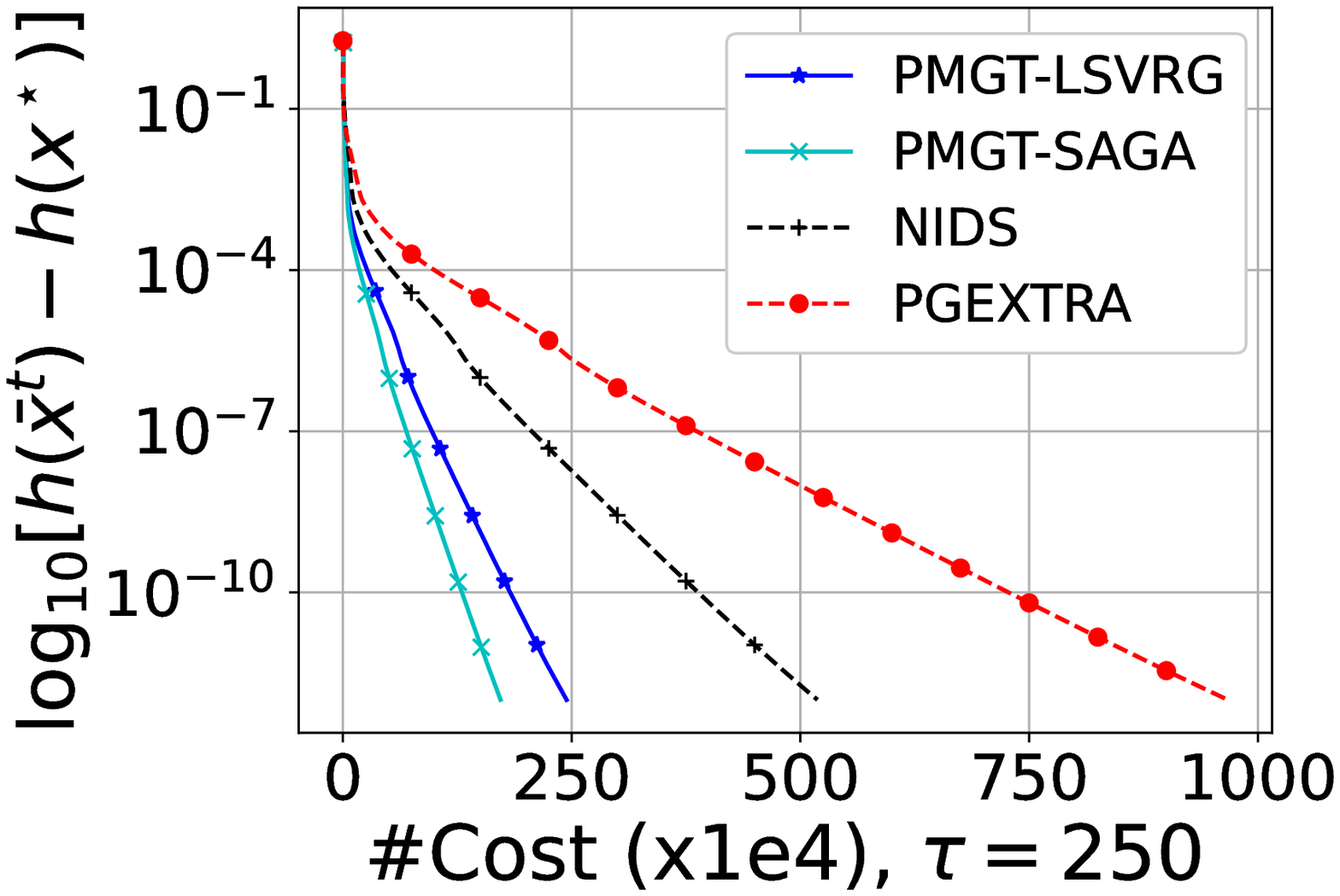}
}
\caption{Performance comparison of \texttt{PMGT-SAGA, PMGT-LSVRG, NIDS}, and \texttt{PG-EXTRA} with $\sigma=10^{-7}n$. The top row and the bottom row present the results with $1-\lambda_2(W)=0.81$ and $1-\lambda_2(W)=0.05$, respectively.}
\label{fig:e7_005}  
\end{figure*}
\begin{figure*}[!ht]
\centering
\subfloat{\includegraphics[width=2in]{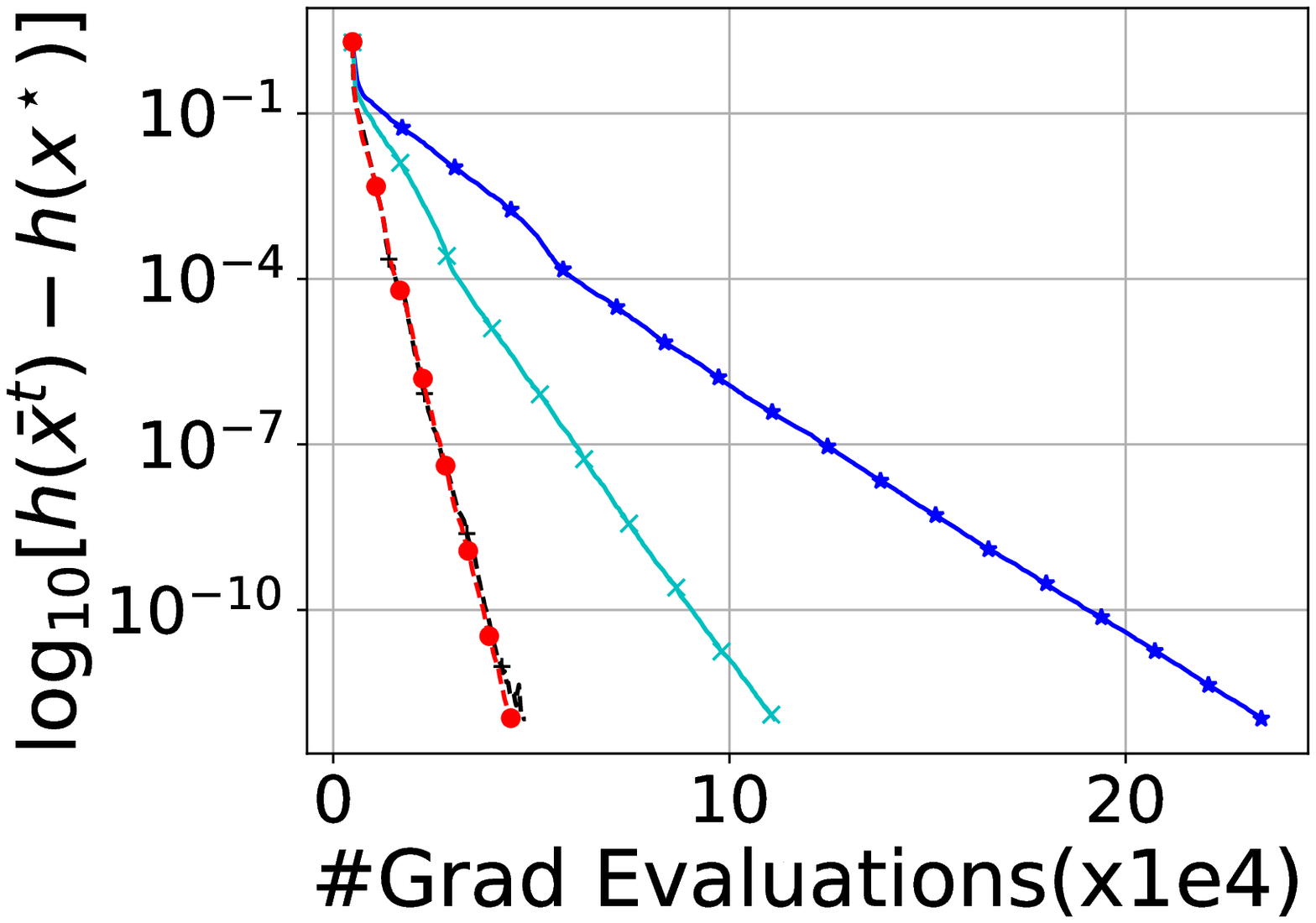}
}
\subfloat{\includegraphics[width=2in]{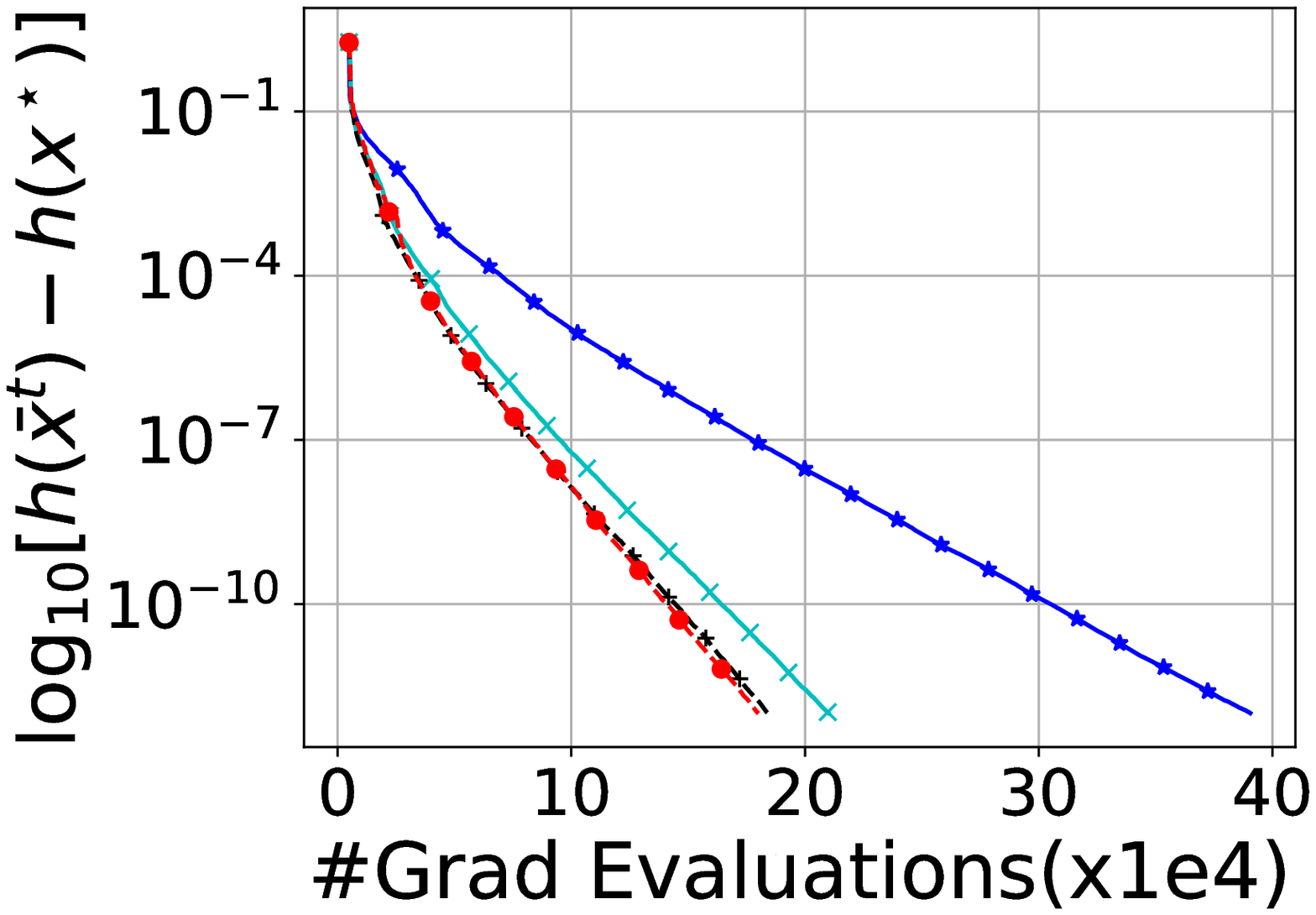}
}
\subfloat{\includegraphics[width=2in]{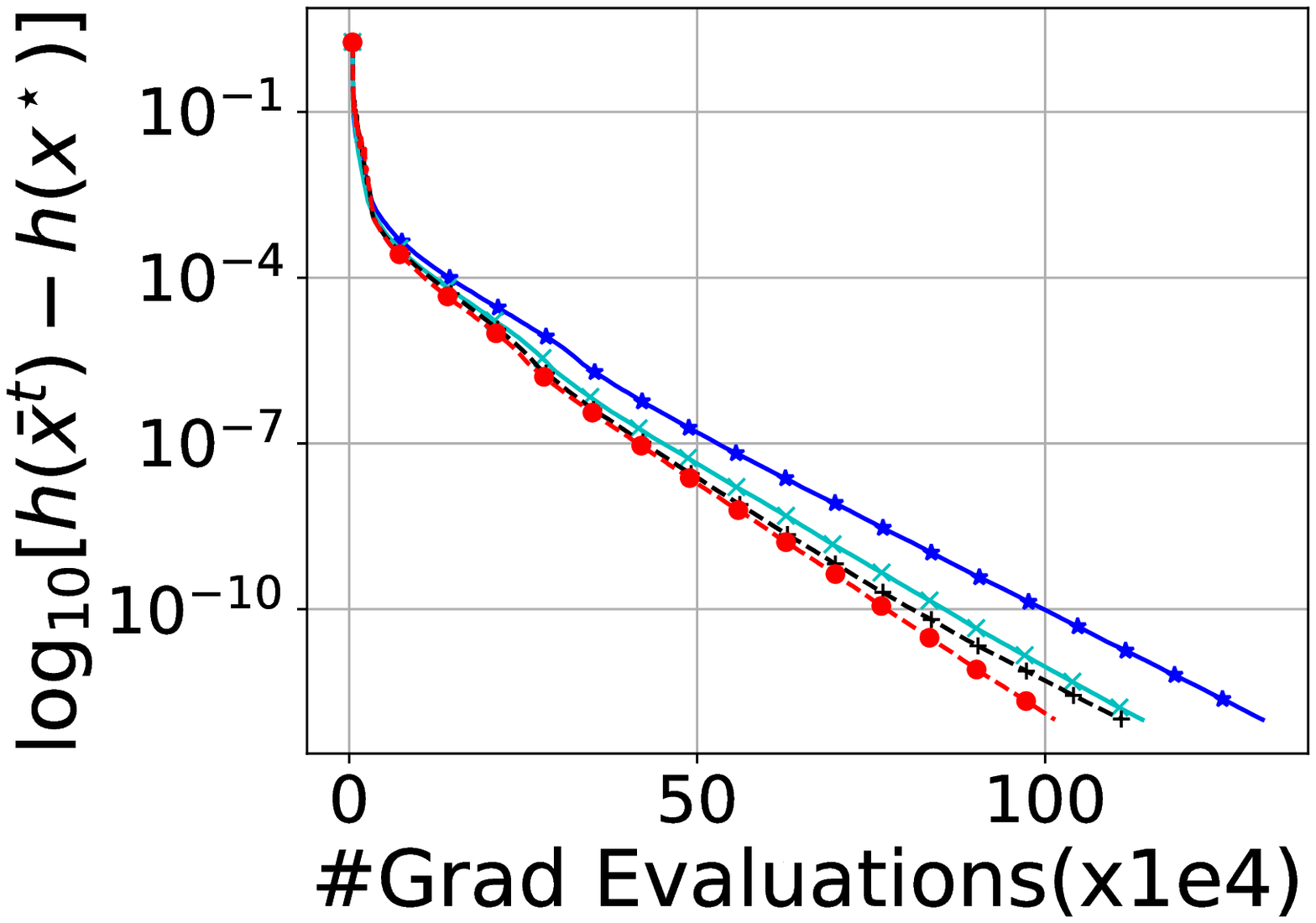}
}
\quad
\subfloat{\includegraphics[width=2in]{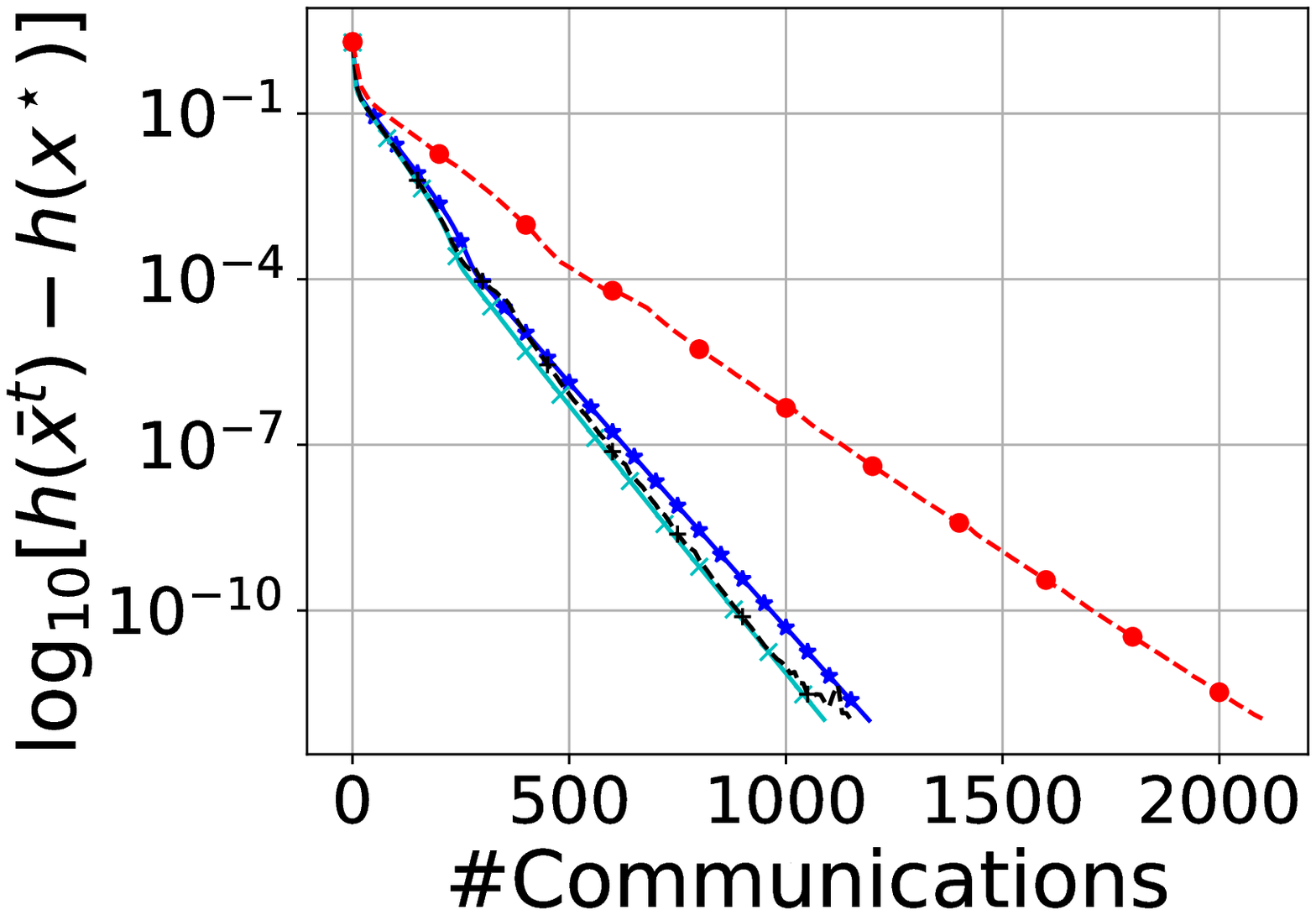}
}
\subfloat{\includegraphics[width=2in]{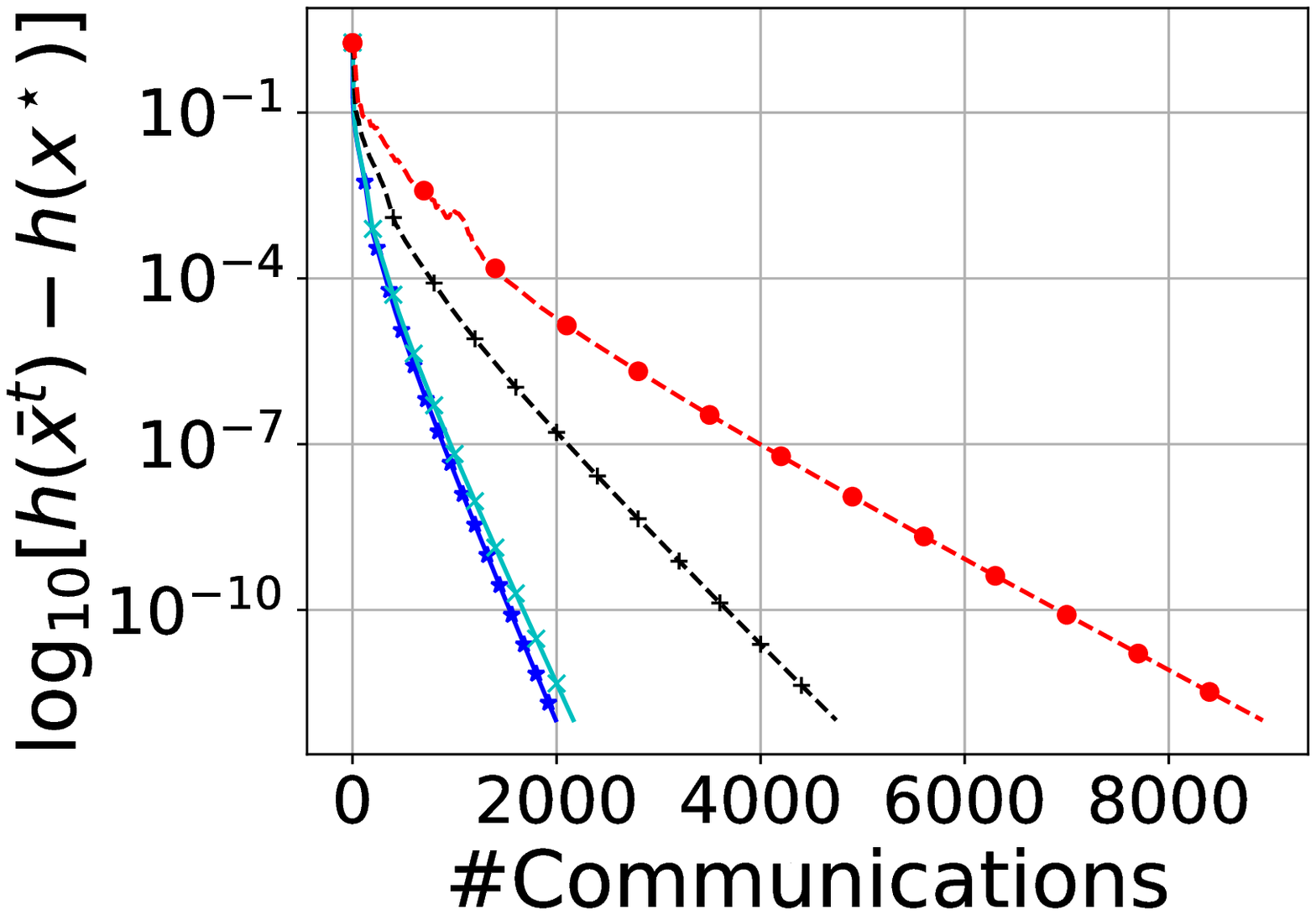}
}
\subfloat{\includegraphics[width=2in]{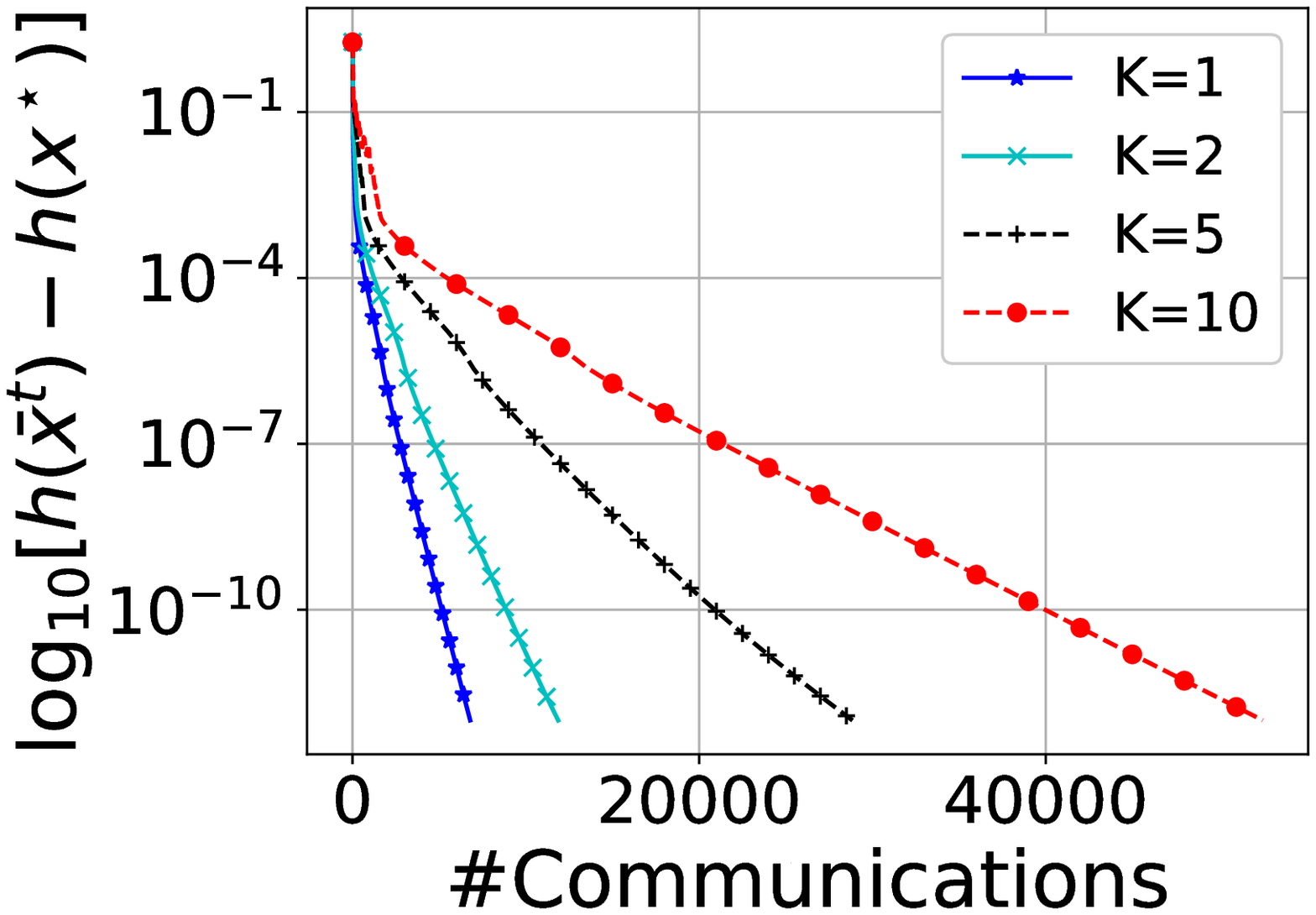}
}
    \caption{Comparisons under different consensus steps $K$ with $1-\lambda_2(W)=0.05$ for \texttt{PMGT-LSVRG}. From left to right, $\sigma=10^{-5}n,10^{-6}n,10^{-7}n$, respectively.} 
    \label{fig:comp_K}
\end{figure*}
\section{NUMERICAL EXPERIMENT} \label{sec:exp}
In this section, we present several numerical experiment results.  We evaluate the performance of the proposed algorithms using logistic regression with $L_1$-regularization for binary classification:
$$f_i(x)=\frac{1}{n}\sum_{j=1}^n \log[1+\exp(-b_j\langle a_j, x\rangle)]+\frac{\sigma_i}{2}\|x\|^2,$$
and 
$$r(x)=\frac{1}{mn}\|x\|_1.$$
We conduct experiments with a real-world dataset a9a ($mn=32560, d=123$) and set $m=20$ which leads to $n=1628$. We consider a random network where each pair of agents is connected with probability $p$ and set $W=I-\frac{L}{\lambda_1(L)}$ where $L$ is the Laplacian of the generated graph. We test our algorithms with different networks where we observe that $1-\lambda_2(W) = 0.05$ and $1-\lambda_2(W)=0.81$, respectively.

We will compare the performance of \texttt{PMGT-SAGA} and \texttt{PMGT-LSVRG} with \texttt{PG-EXTRA} \cite{shi2015pgextra} and \texttt{NIDS} \cite{li2019nids} because the contributions of \cite{Xu2020Unified} mainly focus on the theoretical analysis and the authors show that \texttt{NIDS} achieves a faster convergence rate as compared to the algorithm proposed in \cite{alghunaim2020decentralized}. Moreover, the empirical results reported in \cite{alghunaim2020decentralized} also demonstrate the superiority of \texttt{NIDS}. We will set $\sigma_i \in \{10^{-5}n,10^{-6}n,10^{-7}n\}$ for all agents to control the condition number of $f(x)$. The $y$ axis is the suboptimality $h(\bar{x}^t)-h(x^*)$ and $h(x^*)$ is approximated by the minimal loss over all iterations and all algorithms. The left plot and middle plot show the suboptimality with respect to the number of component gradient evaluations and the number of communications, respectively. The right plot is taken with respect to the cost (evaluating $\nabla f_{i,j}$ is of cost $1$ and the cost of one round of decentralized communication is $\tau$ \cite{hendrikx2020dual}). All parameters are well-tuned and mini-batch methods are used to accelerate convergence.

Figure~\ref{fig:e5_005} reports the results for $\sigma_i=n10^{-5}$ when $1-\lambda_2(W)=0.81$ and $1-\lambda_2(W)=0.05$, respectively. Because \texttt{NIDS} and \texttt{PG-EXTRA} perform similarly, we only report the results of \texttt{PG-EXTRA}. We see that \texttt{PMGT-SAGA} and \texttt{PMGT-LSVRG} are of a higher communication complexity but require much less component gradient evaluations which illustrates the benefits of stochastic algorithms over algorithms based on full gradients. The figures also show that \texttt{PG-EXTRA} converges much slower when $1-\lambda_2(W)$ is small while \texttt{PMGT-SAGA} and \texttt{PMGT-LSVRG} are rather robust due to the extra rounds of decentralized communications. Therefore, multi-consensus offers more advantages when the network is poorly connected.

Figure \ref{fig:e7_005} reports the results for $\sigma_i=n10^{-7}$ when $1-\lambda_2(W)=0.81$ and $1-\lambda_2(W)=0.05$, respectively. In this ill-conditioned case, we see that \texttt{PMGT-SAGA} and \texttt{PMGT-LSVRG} further outperform \texttt{NIDS} and \texttt{PG-EXTRA} with respect to the number of component gradient evaluations. Furthermore, the total rounds of decentralized communications of \texttt{PMGT-VR} algorithms are roughly the same as that of \texttt{PG-EXTRA}. \texttt{NIDS} is faster than \texttt{PG-EXTRA} since we observe that \texttt{NIDS} can converge with a larger stepsize. In this setting, the condition number $\kappa$ is the dominant factor and the impact of network is relatively small.

We also see that the costs of \texttt{PMGT-SAGA} and \texttt{PMGT-LSVRG} are much lower than those of \texttt{PG-EXTRA} and \texttt{NIDS} regardless of the experiment setting if we choose $\tau=250$ and they perform similarly when $\tau$ is large enough. For instance, \texttt{PMGT-SAGA} and \texttt{PG-EXTRA} perform similarly for $\tau\approx 1300$ in the first row of Fig~\ref{fig:e5_005} and for $\tau\approx 500$ in the second row; \texttt{PMGT-SAGA} and \texttt{NIDS} perform similarly for $\tau \approx 1400$ in the two experiments with $\sigma=10^{-7}n$. We observe that decentralized optimization algorithms are faced with a computation-communication tradeoff.
$\tau$ can be considered as the relative ratio of cost of communication and cost of gradient evaluation. When $\tau$ is not too big (decentralized communication is not too expensive), \texttt{PMGT-VR} methods are preferable as compared to \texttt{PG-EXTRA} and \texttt{NIDS}. 

Finally, we compare our algorithms with different number of per-iteration decentralized communications $K$, reported in Figure \ref{fig:comp_K}. Because the behaviors of \texttt{PMGT-SAGA} and \texttt{PMGT-LSVRG} are similar with different $K$, we only report the results of \texttt{PMGT-LSVRG}. We consider a network with $1-\lambda_2(W) = 0.05$ and set $\sigma_i=\sigma=10^{-5}n,10^{-6}n,10^{-7}n$ for three experiments respectively. We see that when $K$ starts to increase, \texttt{PMGT-LSVRG} converges faster with respect to the number of component gradient evaluations and the communication complexity is relatively stable in the first two columns because the faster convergence rate compensates for the extra communication cost. We also see that after a certain threshold, a larger $K$ cannot further improve the convergence rate with respect to the number of component gradient evaluations and the total rounds of decentralized communications increase significantly. For the ill-conditioned case, we see again that the condition number is the dominant factor and the impact of multi-consensus is relatively small. 

\section{Conclusion} \label{sec:conclude}
In this paper, we propose a novel algorithmic framework, called \texttt{PMGT-VR}, which is based on a novel combination of multi-consensus, gradient tracking and variance reduction techniques.
Theoretically, we show that a decentralized proximal variance reduction algorithm could admit a similar convergence rate by imitating its centralized counterpart.
Specifically, we propose two representative algorithms named \texttt{PMGT-SAGA, PMGT-LSVRG}. 
We also show that \texttt{PMGT-SAGA} and \texttt{PMGT-LSVRG} can achieve much better computational efficiency than existing decentralized proximal algorithms.
To the best of our knowledge, \texttt{PMGT-SAGA} and \texttt{PMGT-LSVRG} are the first decentralized proximal variance reduction algorithms.
Finally, our methodology can be extended to other variance reduction techniques in a similar fashion. 
Thus, our \texttt{PMGT-VR} framework can provide an insight in developing novel decentralized proximal variance reduction algorithms.
%\appendix

\bibliography{ref.bib}
\bibliographystyle{IEEEtran}

\appendices

\section{Proofs of Lemma~\ref{lem:gradient_tracking_mgt}, \ref{lem:fastmix}, \ref{lem:prox}, and \ref{lem:consensus_error_mgt}}

\begin{algorithm}[H]
	\caption{FastMix}
	\label{alg:mix}
	\begin{small}
		\begin{algorithmic}[1]
			\STATE {\bf Input:} $\xb^{0} = \xb^{-1}$, $K$, $W$, stepsize $\eta_w = \frac{1 - \sqrt{1-\lambda^2_2(W)}}{1+\sqrt{1-\lambda^2_2(W)}}$.
			\FOR {$k=0,\dots, K$ }
			\STATE $\xb^{k+1} = (1+\eta_w)W\xb^k - \eta_w \xb^{k-1}$;  
			\ENDFOR
			\STATE {\bf Output:} $\xb^K$.
		\end{algorithmic}
	\end{small}
\end{algorithm}

In this section, we first provide the proof of lemmas~\ref{lem:gradient_tracking_mgt} and~\ref{lem:prox} that are related to gradient tracking and proximal mapping.
The proof of lemma~\ref{lem:fastmix} can be found in \cite{Liu2011FastMix}. Then, we invoke these results to give a proof of Lemma~\ref{lem:consensus_error_mgt} that is related to the iterative inequality about consensus errors.

\begin{proof}[Proof of Lemma~\ref{lem:gradient_tracking_mgt}] By the update rule and property of \texttt{FastMix}:
\begin{equation*}
    \begin{aligned}
\bs^{t} &= \mathrm{FastMix}\left(\bs^{t-1} + \vb^{t} - \vb^{t-1}, K\right),\\   \mathbf{1}^\top\xb &=\mathbf{1}^\top\mathrm{FastMix}(\xb,K),~\forall \xb \in \RR^{m\times d},
\end{aligned}
\end{equation*}
we know that 
 \begin{equation*}
     \begin{aligned}
     \bbs^t = \bbs^{t-1}+\bar{v}^t-\bar{v}^{t-1}.
     \end{aligned}
 \end{equation*}
Since $\bbs^t = \bar{v}^t$ holds for $t=0$, by induction we can show that $\bbs^t = \bar{v}^t$ holds. $\EE[\bbs^t] = \frac{1}{m}\sum_i^m \nabla f_i(\xb_i^t)$ holds because $\vb_i^t$ is an unbiased estimator of $\nabla f_i(\xb_i^t)$ with \texttt{PMGT-SAGA} and \texttt{PMGT-LSVRG}. We then have
    \begin{equation*}
	\begin{aligned}
	&\norm{\nabla f(\bbx^t) - \EE[\bbs^t]}^2\\
	=&
	\norm{\frac{1}{m}\sum_{i=1}^m \left(\nabla f_i(\xb_i^t) - \nabla f_i(\bbx^t)\right)}^2\\
	\le&
	\frac{1}{m}\sum_{i=1}^m\norm{\nabla f_i(\xb_i^t) - \nabla f_i(\bbx^t)}^2
	\\
	\le&
	\frac{L^2}{m}\sum_{i=1}^m\norm{\xb_i^t - \bbx^t}^2\\
	=&
	\frac{L^2}{m}\norm{\xb^t - \mathbf{1}\bbx^t}^2.
	\end{aligned}
    \end{equation*}
\end{proof}

\begin{proof}[Proof of Lemma~\ref{lem:prox}]
	We recall that $\zb_i,\xb_i$ are the $i$-th row of the matrices. By the definition of the proximal operators, we have
	\begin{align*}
	&\proximal_{\eta m,R}(\xb)\\
	=&\argmin_{\zb\in \RR^{m\times d}} \left( R(\zb)+ \frac{1}{2\eta m}\norm{\zb-\xb}^2\right) \\
	=&
	\argmin_{\zb\in \RR^{m\times d}} \left( \frac{1}{m}\sum_{i=1}^{m}r(\zb_{i})+ \sum_{i=1}^{m}\frac{1}{2\eta m}\norm{\zb_{i}-\xb_i}^2\right) 
	\\
	=&
	\argmin_{\zb\in \RR^{m\times d}} \left( \sum_{i=1}^{m}r(\zb_{i})+ \sum_{i=1}^{m}\frac{1}{2\eta}\norm{\zb_{i}-\xb_i}^2\right) \\
	=&  \begin{pmatrix}
	\argmin_{z \in R^d}\left( r(z)+\frac{1}{2\eta}\norm{z-\xb_{1}}  \right)^\top \\ 
	\vdots \\ 
	\argmin_{z\in R^d}\left( r(z)+\frac{1}{2\eta}\norm{z-\xb_{m}}  \right)^\top
	\end{pmatrix}.
	\end{align*}
	Therefore, we have the following equation
	\begin{equation*}
	\proximal_{\eta m,R}^{(i)}(\xb)=\proximal_{\eta,r}(\xb_{i}).
	\end{equation*} 

    Using the above result, we expand the sum
	\begin{align*}
	&\norm{\proximal_{\eta  m,R}(\frac{1}{m}\mathbf{1}\mathbf{1}^\top \xb)-\frac{1}{m}\mathbf{1}\mathbf{1}^\top\proximal_{\eta  m,R}( \xb)}^2
	\notag
	\\
	\overset{\eqref{eq:local proximal equality}}{=}&
	{m\norm{\proximal_{\eta  ,r}(\frac{1}{m}\mathbf{1}^\top \xb)-\frac{1}{m}\sum_{i=1}^m\proximal_{\eta ,r}( \xb_i)}^2}
	\notag
	\\
	=&
	{m\norm{\frac{1}{m}\sum_{i=1}^m\left(\proximal_{\eta  ,r}( \frac{1}{m}\mathbf{1}^\top \xb)-\proximal_{\eta ,r}( \xb_i)\right) }^2}
	\notag
	\\
	\le&
	m\cdot \frac{1}{m} \sum_{i=1}^m \norm{\proximal_{\eta  ,r}( \frac{1}{m}\mathbf{1}^\top \xb)-\proximal_{\eta ,r}( \xb_i)}^2
	\\
	\le&
	\sum_i^m \norm{\frac{1}{m}\mathbf{1}^\top \xb - \xb_i}^2
	\\
	=&
	\norm{\xb - \frac{1}{m}\mathbf{11^\top} \xb}^2,
	\end{align*}
	where the last inequality is because of the non-expansiveness of proximal operator.
\end{proof}

We are ready to prove the iterative inequality about consensus errors.
\begin{proof}[Proof of Lemma \ref{lem:consensus_error_mgt}]
    For simplicity, we denote \texttt{FastMix}$(\cdot,K)$ operation as $\mathbb{T}(\cdot)$. From Lemma~\ref{lem:fastmix} we can know that
	\begin{equation}\label{eq:mix}
	\norm{\mathbb{T}(\xb)-\frac{1}{m}\mathbf{1}\mathbf{1}^\top\xb}\le \rho\norm{\xb-\frac{1}{m}\mathbf{1}\mathbf{1}^\top\xb}.
	\end{equation}
	First, we have
	\begin{equation*}
	\begin{aligned}
	&\norm{\xb^{t+1} - \mathbf{1}\bbx^{t+1}}\\
	\le & \rho\norm{\proximal_{\eta m,R}(\xb^{t}-\eta \bs^t)-\frac{\mathbf{1}\mathbf{1}^\top}{m}\proximal_{\eta m,R}(\xb^{t}-\eta \bs^t)}\\
	\le &
	    \rho\norm{\proximal_{\eta m,R}(\xb^{t}-\eta \bs^t)- \proximal_{\eta m,R}\left( \mathbf{1}(\bbx^{t}-\eta\bbs^{t})\right)}
	\\&+
    \rho\norm{\proximal_{\eta m,R}\left( \mathbf{1}(\bbx^{t}-\eta\bbs^{t})\right)-\frac{\mathbf{1}\mathbf{1}^\top}{m}\proximal_{\eta m,R}(\xb^{t}-\eta \bs^t)}
    \\
	\le &
	\rho\norm{\xb^{t}-\mathbf{1}\bbx^{t}}+\rho\eta\norm{\bs^t-\mathbf{1}\bbs^{t}}
	\\ &+
	\rho\norm{ \left(\xb^t-\eta\bs^t \right)   -\mathbf{1}\left( \bbx^t-\eta\bbs^t \right) } 
	\\
	\le&
	2\rho\norm{\xb^{t}-\mathbf{1}\bbx^{t}}+2\rho\eta\norm{\bs^t-\mathbf{1}\bbs^{t}},
	\end{aligned}
	\end{equation*}
	where the first inequality is due to the update rule and Eqn.~\eqref{eq:mix}, and the third inequality is because of Lemma~\ref{lem:prox} and the non-expansiveness of proximal operator.
	
	Therefore, we can obtain that
	\begin{equation*}
	\begin{aligned}
	&\frac{1}{m}\norm{\xb^{t+1} - \mathbf{1}\bbx^{t+1}}^2\\
	\le&
	8\rho^2 \frac{1}{m}\norm{\xb^{t}-\mathbf{1}\bbx^{t}}^2
	+
	8\rho^2 \frac{\eta^2}{m} \norm{\bs^t-\mathbf{1}\bbs^{t}}^2.
	\end{aligned}
	\end{equation*}
	
	Furthermore, we have
	\begin{equation*}
	\begin{aligned}
	&\norm{\bs^{t+1} - \mathbf{1}\bbs^{t+1}}^2\\
	=&
	\norm{\mathbb{T}\left(\bs^t + \vb^{t+1} - \vb^t\right) - \frac{1}{m}\mathbf{1}\mathbf{1}^\top\left(\bs^t + \vb^{t+1} - \vb^t\right) }^2
	\\
	\le&
	2\rho^2 \norm{\bs^t - \mathbf{1} \bbs^t}^2
	+
	2\rho^2 \norm{\vb^{t+1} - \vb^t - \frac{1}{m}\mathbf{1}\mathbf{1}^\top(\vb^{t+1} - \vb^t)}^2
	\\
	\le&
	2\rho^2 \norm{\bs^t - \mathbf{1} \bbs^t}^2
	+
	2\rho^2 \norm{\vb^{t+1} - \vb^t}^2,
	\end{aligned}
	\end{equation*}
	where the last inequality is because it holds that $\norm{\xb-\frac{1}{m}\mathbf{1}\mathbf{1}^\top\xb}\le\norm{\xb}$ for any $\xb\in\RR^{m\times d}$.
\end{proof}

% you can choose not to have a title for an appendix
% if you want by leaving the argument blank

\iffalse
\begin{proof}[Proof of Lemma \ref{lem:gradient_tracking_mgt}]
    $\bbs^t = \frac{1}{m}\sum_i^m \vb_i^t$ is a well-known result of gradient tracking and its proof can be found in \cite{qu2017harnessing}. $\EE[\bbs^t] = \frac{1}{m}\sum_i^m \nabla f_i(\xb_i^t)$ holds because $\vb_i$ is an unbiased estimator of $\nabla f_i(\xb_i^t)$. We then have
    \begin{equation*}
	\begin{aligned}
	&\norm{\nabla f(\bbx^t) - \EE[\bbs^t]}^2\\
	=&
	\norm{\frac{1}{m}\sum_{i=1}^m \left(\nabla f_i(\xb_i^t) - \nabla f_i(\bbx^t)\right)}^2\\
	\le&
	\frac{1}{m}\sum_{i=1}^m\norm{\nabla f_i(\xb_i^t) - \nabla f_i(\bbx^t)}^2
	\\
	\le&
	\frac{L^2}{m}\sum_{i=1}^m\norm{\xb_i^t - \bbx^t}^2
	=
	\frac{L^2}{m}\norm{\xb^t - \mathbf{1}\bbx^t}^2,
	\end{aligned}
    \end{equation*}
	where first equality is because of $\bar{s}^t=\bar{v}^t$ and $\vb^t_i$ are unbiased for \texttt{PMGT-SAGA, PMGT-LSVRG}.
\end{proof}
\fi
 
\section{Proof of Lemma~\ref{lem:local_gradient_estimator} and \ref{lem:consensus_error}}
In this section, we derive an upper bound for $\norm{\vb^{t+1}-\vb^t}^2$ and substitute it into Lemma~\ref{lem:consensus_error_mgt} to obtain the desired linear system inequality. Before continuing, we first prove two auxiliary results.
\begin{lemma}
    For \texttt{PMGT-VR} methods, it holds that
	\begin{equation}
	\label{eq:bnd_1}
	\begin{aligned}
	&\frac{1}{mn}\sum_{i=1,j=1}^{m,n} \norm{\nabla f_{i,j}(\xb_i^t) - \nabla f_{i,j}(x^*)}^2\\
	\le&
	4L \cdot D_f(\bbx^t, x^*)
	+
	\frac{2L^2}{m}\norm{\xb^t - \mathbf{1}\bbx^t}^2,
	\end{aligned}
	\end{equation}
		and
	\begin{equation}
	\label{eq:bnd_2}
	\begin{aligned}
	&\frac{1}{mn}\sum_{i=1,j=1}^{m,n} \norm{\nabla f_{i,j}(\xb_i^t) - \nabla f_{i,j}(x^*)}^2\\
	\le&
	2L^2\norm{\bbx^t - x^*}^2
	+
	\frac{2L^2}{m}\norm{\xb^t - \mathbf{1}\bbx^t}^2.
	\end{aligned}
	\end{equation}
\end{lemma}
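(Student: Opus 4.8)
The plan is to prove \eqref{eq:bnd_1} and \eqref{eq:bnd_2} by the same device: insert the consensus average $\bbx^t$ and apply the elementary inequality $\norm{a-b}^2 \le 2\norm{a-c}^2 + 2\norm{c-b}^2$ with $c = \nabla f_{i,j}(\bbx^t)$, giving
\[
\norm{\nabla f_{i,j}(\xb_i^t) - \nabla f_{i,j}(x^*)}^2 \le 2\norm{\nabla f_{i,j}(\xb_i^t) - \nabla f_{i,j}(\bbx^t)}^2 + 2\norm{\nabla f_{i,j}(\bbx^t) - \nabla f_{i,j}(x^*)}^2 .
\]
Averaging over $i\in\{1,\dots,m\}$ and $j\in\{1,\dots,n\}$, the first group of terms is common to both bounds and the two claims differ only in how the second group is estimated.

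First I would bound the common ``consensus'' term. Since the $L$-smoothness of $f_{i,j}$ (equivalently, via the co-coercivity bound \eqref{eq:str_cvx}) implies $\norm{\nabla f_{i,j}(x) - \nabla f_{i,j}(y)} \le L\norm{x-y}$, we have $\norm{\nabla f_{i,j}(\xb_i^t) - \nabla f_{i,j}(\bbx^t)}^2 \le L^2\norm{\xb_i^t - \bbx^t}^2$; averaging over $j$ and summing over $i$, together with $\sum_{i=1}^m \norm{\xb_i^t-\bbx^t}^2 = \norm{\xb^t - \mathbf{1}\bbx^t}^2$, yields $\frac{1}{mn}\sum_{i,j} 2\norm{\nabla f_{i,j}(\xb_i^t) - \nabla f_{i,j}(\bbx^t)}^2 \le \frac{2L^2}{m}\norm{\xb^t - \mathbf{1}\bbx^t}^2$, which is exactly the last term in both \eqref{eq:bnd_1} and \eqref{eq:bnd_2}.

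For \eqref{eq:bnd_2} the remaining term is handled identically by smoothness: $\norm{\nabla f_{i,j}(\bbx^t) - \nabla f_{i,j}(x^*)}^2 \le L^2\norm{\bbx^t - x^*}^2$, so averaging produces the term $2L^2\norm{\bbx^t-x^*}^2$ and \eqref{eq:bnd_2} follows. For \eqref{eq:bnd_1} I would instead invoke \eqref{eq:str_cvx} with $x=\bbx^t$ and $y=x^*$, which reads $\frac{1}{2L}\norm{\nabla f_{i,j}(\bbx^t) - \nabla f_{i,j}(x^*)}^2 \le D_{f_{i,j}}(\bbx^t, x^*)$; averaging over $i,j$ and using that the Bregman divergence is linear in the underlying function, so that $\frac{1}{mn}\sum_{i,j} D_{f_{i,j}}(\bbx^t, x^*) = D_f(\bbx^t, x^*)$, gives $\frac{1}{mn}\sum_{i,j} 2\norm{\nabla f_{i,j}(\bbx^t)-\nabla f_{i,j}(x^*)}^2 \le 4L\, D_f(\bbx^t, x^*)$, which combined with the common term gives \eqref{eq:bnd_1}.

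There is no real obstacle here: the argument only uses Young's inequality, the Lipschitz-gradient consequence of $L$-smoothness, the co-coercivity bound \eqref{eq:str_cvx}, and the additivity of $D_f$ over the components $f_{i,j}$. The single point requiring a little care is to apply \eqref{eq:str_cvx} in the orientation $x=\bbx^t$, $y=x^*$ (and not the reverse), so that the first slot of the component Bregman divergence matches the $D_f(\bbx^t, x^*)$ appearing in the statement, together with keeping track of the factor $2$ produced by the splitting step.
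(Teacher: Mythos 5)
Your proposal is correct and follows essentially the same route as the paper's proof: split with the intermediate point $\nabla f_{i,j}(\bbx^t)$ via $\norm{a-b}^2\le 2\norm{a-c}^2+2\norm{c-b}^2$, bound the consensus term by the Lipschitz continuity of the gradients, and then finish either with \eqref{eq:str_cvx} (oriented as $x=\bbx^t$, $y=x^*$) together with the additivity of the Bregman divergence for \eqref{eq:bnd_1}, or with $L$-smoothness again for \eqref{eq:bnd_2}. No gaps.
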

\begin{proof}
We have
    \begin{equation*}
	\begin{aligned}
	&\frac{1}{mn}\sum_{i=1,j=1}^{m,n} \norm{\nabla f_{i,j}(\xb_i^t) - \nabla f_{i,j}(x^*)}^2
	\\
	\le&
	\frac{2}{mn}\sum_{i=1,j=1}^{m,n} \left(\right.\norm{\nabla f_{i,j}(\xb_i^t) -\nabla f_{i,j}(\bbx^t)}^2\\
	&+\norm{\nabla f_{i,j}(\bbx^t)- \nabla f_{i,j}(x^*) }^2\left.\right)
	\\
	\le&
	\frac{2}{mn}\sum_{i=1,j=1}^{m,n} \norm{\nabla f_{i,j}(\bbx^t) - \nabla f_{i,j}(x^*)}^2
	\\
	&+
	\frac{2L^2}{m}\norm{\xb^t - \mathbf{1}\bbx^t}^2
    \\
	\overset{\eqref{eq:str_cvx}}{\le}&
	\frac{4L}{mn}\sum_{i=1,j=1}^{m,n} D_{f_{i,j}}(\bbx^t, x^*)
	+
	\frac{2L^2}{m}\norm{\xb^t - \mathbf{1}\bbx^t}^2
	\\
	=&
	4L \cdot D_f(\bbx^t, x^*)
	+
	\frac{2L^2}{m}\norm{\xb^t - \mathbf{1}\bbx^t}^2,
	\end{aligned}
    \end{equation*}
	where the second inequality is because of $L$-smoothness of $f_{i,j}$. If we use $L$-smoothness instead of Eqn. \eqref{eq:str_cvx} in the last inequality, we can obtain that
	\begin{equation*}
	\begin{aligned}
	&\frac{1}{mn}\sum_{i=1,j=1}^{m,n} \norm{\nabla f_{i,j}(\xb_i^t) - \nabla f_{i,j}(x^*)}^2\\
	\le&
	2L^2\norm{\bbx^t - x^*}^2
	+
		\frac{2L^2}{m}\norm{\xb^t - \mathbf{1}\bbx^t}^2.
	\end{aligned}
	\end{equation*}
\end{proof}

\begin{lemma} \label{lem:decomposition_v}
For \texttt{PMGT-SAGA}, it holds that
        \begin{equation}
        \begin{aligned}
		\label{eq:v_var_dsaga}
		&\frac{1}{m}\sum_{i=1}^m\EE\left[\norm{\vb_i^t - \nabla f_i(x^*)}^2\right]\\
		\le&
		\frac{2}{mn}\sum_{i=1,j=1}^{m,n} 
		\left(\right.\norm{\nabla f_{i,j}(\xb_i^t) - \nabla f_{i,j}(x^*)}^2\\
		&+
		\norm{\nabla f_{i,j}(x^*) - \nabla f_{i,j}(\phi_{i,j}^{t})}^2
		\left.\right),
	    \end{aligned}
	   \end{equation}

For \texttt{PMGT-LSVRG}, it holds that
        \begin{equation}
        \begin{aligned}
		\label{eq:v_var_dlsvrg}
		&\frac{1}{m}\sum_{i=1}^m\EE\left[\norm{\vb_i^t - \nabla f_i(x^*)}^2\right]\\
		\le&
		\frac{2}{mn}\sum_{i=1,j=1}^{m,n} 
		\left(\right.\norm{\nabla f_{i,j}(\xb_i^t) - \nabla f_{i,j}(x^*)}^2\\
		&+
		\norm{\nabla f_{i,j}(x^*) - \nabla f_{i,j}(\wb^{t}_i)}^2\left.
		\right).
	    \end{aligned}
	    \end{equation}

\end{lemma}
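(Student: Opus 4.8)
The plan is to use the standard bias--variance bound for variance-reduced gradient estimators, relying only on Young's inequality $\norm{a-b}^2\le 2\norm{a}^2+2\norm{b}^2$, the unbiasedness $\EE[\vb_i^t]=\nabla f_i(\xb_i^t)$ (immediate from the definitions, as already noted in the proof of Lemma~\ref{lem:gradient_tracking_mgt}), and the elementary fact $\EE\norm{X-\EE X}^2\le \EE\norm{X}^2$. Throughout, $\EE$ denotes the conditional expectation over the sampled index $j_i$ given the history up to iteration $t$, so that $\xb_i^t$, the stored points $\phi_{i,j}^t$ (for \texttt{PMGT-SAGA}) and $\wb_i^t$ (for \texttt{PMGT-LSVRG}) are treated as deterministic; in particular $\EE[\,\cdot\,]$ of any function of $j_i$ equals its $\frac1n\sum_{j=1}^n$ average.

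For \texttt{PMGT-SAGA}, I would insert $\pm\nabla f_{i,j_i}(x^*)$ and $\pm\nabla f_i(x^*)$ to write $\vb_i^t-\nabla f_i(x^*)=A-B$, where $A=\nabla f_{i,j_i}(\xb_i^t)-\nabla f_{i,j_i}(x^*)$ is the fresh term and $B=\big(\nabla f_{i,j_i}(\phi_{i,j_i}^t)-\nabla f_{i,j_i}(x^*)\big)-\big(\tfrac1n\sum_{j}\nabla f_{i,j}(\phi_{i,j}^t)-\nabla f_i(x^*)\big)$ is the control-variate term. Applying $\norm{A-B}^2\le 2\norm{A}^2+2\norm{B}^2$ and taking $\EE$, the first term gives $\EE\norm{A}^2=\tfrac1n\sum_j\norm{\nabla f_{i,j}(\xb_i^t)-\nabla f_{i,j}(x^*)}^2$ since $j_i$ is uniform. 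The key observation for the second term is that, using $\nabla f_i(x^*)=\tfrac1n\sum_j\nabla f_{i,j}(x^*)$, we have $\tfrac1n\sum_{j}\nabla f_{i,j}(\phi_{i,j}^t)-\nabla f_i(x^*)=\EE\big[\nabla f_{i,j_i}(\phi_{i,j_i}^t)-\nabla f_{i,j_i}(x^*)\big]$, so $B$ has the centered form $X-\EE X$ with $X=\nabla f_{i,j_i}(\phi_{i,j_i}^t)-\nabla f_{i,j_i}(x^*)$; hence $\EE\norm{B}^2\le \EE\norm{X}^2=\tfrac1n\sum_j\norm{\nabla f_{i,j}(\phi_{i,j}^t)-\nabla f_{i,j}(x^*)}^2$. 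Combining these two estimates, averaging over $i=1,\dots,m$, and dividing by $m$ yields Eqn.~\eqref{eq:v_var_dsaga}.

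For \texttt{PMGT-LSVRG}, the argument is verbatim the same with $\wb_i^t$ replacing the stored point: write $\vb_i^t-\nabla f_i(x^*)=A-B'$ with the same $A$ and $B'=\big(\nabla f_{i,j_i}(\wb_i^t)-\nabla f_{i,j_i}(x^*)\big)-\big(\nabla f_i(\wb_i^t)-\nabla f_i(x^*)\big)$; since $\nabla f_i(\wb_i^t)-\nabla f_i(x^*)=\EE\big[\nabla f_{i,j_i}(\wb_i^t)-\nabla f_{i,j_i}(x^*)\big]$, the term $B'$ is again of the form $Y-\EE Y$, so $\EE\norm{B'}^2\le \tfrac1n\sum_j\norm{\nabla f_{i,j}(\wb_i^t)-\nabla f_{i,j}(x^*)}^2$. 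Applying the same Young inequality, summing over $i$, and dividing by $m$ gives Eqn.~\eqref{eq:v_var_dlsvrg}.

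There is no substantial obstacle here; the only point that requires care is the grouping in the decomposition: the subtracted full (or average) gradient must be paired with exactly the sampled control term, so that the second summand $B$ (resp.\ $B'$) becomes a centered random variable and the variance-$\le$-second-moment inequality can be invoked. Everything else is the triangle/Young inequality and rewriting a uniform expectation over $j_i$ as a $\tfrac1n\sum_j$ average.
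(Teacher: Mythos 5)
Your proposal is correct and follows essentially the same route as the paper's proof: the paper likewise splits $\vb_i^t-\nabla f_i(x^*)$ into the fresh term $\nabla f_{i,j_i}(\xb_i^t)-\nabla f_{i,j_i}(x^*)$ plus a centered control-variate term of the form $X-\EE_{j_i}X$, then applies $\norm{a+b}^2\le 2\norm{a}^2+2\norm{b}^2$ together with $\EE\norm{X-\EE X}^2\le\EE\norm{X}^2$ and rewrites the uniform expectation over $j_i$ as a $\tfrac1n\sum_j$ average. The point you flag as requiring care --- pairing the subtracted average gradient with the sampled stored-gradient term so the second summand is centered --- is exactly the grouping used in the paper.
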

\begin{proof}
    For \texttt{PMGT-SAGA}, we have
    	\begin{align*}
	    &\frac{1}{m}\sum_{i=1}^m\EE\norm{\vb_i^t - \nabla f_i(x^*)}^2
		\\
		%=&\frac{1}{m}\sum_{i=1}^m\EE_{j_i} \|\nabla f_{i,j_i}(\phi_{i,j_i}^{t}) - \nabla f_{i,j_i}(\phi_{i,j_i}^{t-1}) \\
		%&+ \frac{1}{n}\sum_{j=1}^n \nabla f_{i,j}(\phi_{i,j}^{t-1}) - \nabla f_i(x^*)\|^2
		%\\
		=&\frac{1}{m}\sum_{i=1}^m\EE_{j_i}\|\nabla f_{i,j_i}(\xb_i^t) - \nabla f_{i,j_i}(x^*) \\
		&+ \nabla f_{i,j_i}(x^*) - \nabla f_{i,j_i}(\phi_{i,j_i}^{t}) \\
		&+ \frac{1}{n}\sum_{j=1}^n \nabla f_{i,j}(\phi_{i,j}^{t}) - \nabla f_i(x^*)\|^2
		\\
		\le&
		\frac{2}{m}\sum_{i=1}^m\EE_{j_i}\left[\norm{\nabla f_{i,j_i}(\xb_i^t) - \nabla f_{i,j_i}(x^*)}^2\right]
		\\
		&+
		\frac{2}{m}\sum_{i=1}^m\EE_{j_i}\bigg[
		\Big\| \nabla f_{i,j_i}(x^*) - \nabla f_{i,j_i}(\phi_{i,j_i}^{t}) \\
		&~~- \EE_{j_i}\big[ \nabla f_{i,j_i}(x^*) - \nabla f_{i,j_i}(\phi_{i,j_i}^{t}) \big]\Big\|^2\bigg]
		\\
		\le&
		\frac{2}{m}\sum_{i=1}^m\EE_{j_i}\left[\norm{\nabla f_{i,j_i}(\xb_i^t) - \nabla f_{i,j_i}(x^*)}^2\right] \\
		&+
		\frac{2}{m}\sum_{i=1}^m\EE_{j_i} \left[\norm{ \nabla f_{i,j_i}(x^*) - \nabla f_{i,j_i}(\phi_{i,j_i}^{t})}^2 \right]
		\\
		\le&
		\frac{2}{m}\sum_{i=1}^m \EE_{j_i}\Big[\norm{\nabla f_{i,j_i}(\xb_i^t) - \nabla f_{i,j_i}(x^*)}^2\\
		&+
		\norm{\nabla f_{i,j_i}(x^*) - \nabla f_{i,j_i}(\phi_{i,j_i}^{t})}^2
		\Big]
		\\
		=&
		\frac{2}{mn}\sum_{i=1,j=1}^{m,n} 
		\norm{\nabla f_{i,j}(\xb_i^t) - \nabla f_{i,j}(x^*)}^2\\
		&+
		\norm{\nabla f_{i,j}(x^*) - \nabla f_{i,j}(\phi_{i,j}^{t})}^2, 
	\end{align*}
where the second inequality is because of $\EE[\norm{a - \EE[a]}^2] \le \EE[\norm{a}^2]$. For \texttt{PMGT-LSVRG}, we replace $\phi_{i,j}^{t}$ with $\wb_i^{t}$. This concludes the proof.
\end{proof}
We are ready to derive an upper bound for $\norm{\vb^{t+1}-\vb^t}^2$.
\begin{proof}[Proof of Lemma \ref{lem:local_gradient_estimator}]
    We first decompose $\norm{\vb^{t+1}-\vb^t}^2$:
	\begin{align*}
		&\norm{\vb^{t+1} - \vb^t}^2\\ 
		=&\sum_{i=1}^m \norm{\vb_i^{t+1} - \nabla f_i(x^*) - (\vb_i^t - \nabla f_i (x^*))  }^2
		\\
		\le&
		2\sum_{i=1}^m\left(\norm{\vb_i^{t+1} - \nabla f_i(x^*)}^2 + \norm{\vb_i^t - \nabla f_i (x^*)}^2\right).
	\end{align*}
	Then, for \texttt{PMGT-SAGA}, we have
	\begin{align*}
		&\EE\left[\frac{1}{m}\norm{\vb^{t+1} - \vb^t}^2\right]\\
		\le&
		\frac{2}{m}\sum_{i=1}^m\EE\left[\norm{\vb_i^{t+1} - \nabla f_i(x^*)}^2 + \norm{\vb_i^t - \nabla f_i (x^*)}^2\right]
		\\
		\overset{\eqref{eq:v_var_dsaga}}{\le}&
		\frac{4}{mn}\sum_{i=1,j=1}^{m,n} 
		\left(\norm{\nabla f_{i,j}(\xb_i^{t+1}) - \nabla f_{i,j}(x^*)}^2\right.\\
		&+\norm{\nabla f_{i,j}(x^*) - \nabla f_{i,j}(\phi_{i,j}^{t+1})}^2 \\
		&+\norm{\nabla f_{i,j}(\xb_i^t) - \nabla f_{i,j}(x^*)}^2\\
		&\left.+
		\norm{\nabla f_{i,j}(x^*) - \nabla f_{i,j}(\phi_{i,j}^{t})}^2
		\right)
		\\
		\overset{\eqref{eq:bnd_2}}{\le}&
		8L^2\norm{\bbx^{t+1} -x^*}^2 + \frac{8L^2}{m}\norm{\xb^{t+1} - \mathbf{1}\bbx^{t+1}}^2 + 4\Delta^{t+1}
		\\&
		+
		8L^2\norm{\bbx^t -x^*}^2 + \frac{8L^2}{m}\norm{\xb^t - \mathbf{1}\bbx^t}^2 + 4\Delta^t
		\\
		\overset{\eqref{eq:xxx}}{\le}&
		\left(8\rho^2 + 1\right)\frac{8L^2}{m}\norm{\xb^t - \mathbf{1}\bbx^t}^2 
		+
		\frac{64\rho^2\eta^2 L^2}{m}\norm{\bs^t - \mathbf{1}\bbs^t}^2
		\\&
		+
		8L^2\norm{\bbx^{t+1} -x^*}^2 +4\Delta^{t+1}
		+
		8L^2\norm{\bbx^t -x^*}^2 + 4\Delta^t,
	\end{align*}
	Similarly, for \texttt{PMGT-LSVRG}, we replace $\phi_{i,j}^{t}, \phi_{i,j}^{t+1}$ with $\wb_i^t$ and $\wb_i^{t+1}$, respectively in the above proof and note that the definitions of gradient learning quantity $\Delta^t$ are different for two algorithms. This concludes the proof.
	\iffalse
	\begin{align*}
		&\EE\left[\frac{1}{m}\norm{\vb^{t+1} - \vb^t}^2\right]\\
		\le&
		\frac{2}{m}\sum_{i=1}^m\EE\left[\norm{\vb_i^{t+1} - \nabla f_i(x^*)}^2 + \norm{\vb_i^t - \nabla f_i (x^*)}^2\right]
		\\
		\overset{\eqref{eq:v_var_dlsvrg}}{\le}&
		\frac{4}{mn}\sum_{i=1,j=1}^{m,n} 
		\left(\norm{\nabla f_{i,j}(\xb_i^{t+1}) - \nabla f_{i,j}(x^*)}^2\right.
		\\
		&+\norm{\nabla f_{i,j}(x^*) - \nabla f_{i,j}(\wb_i^{t+1})}^2\\
		&+\norm{\nabla f_{i,j}(\xb_i^t) - \nabla f_{i,j}(x^*)}^2\\
		&\left.+
		\norm{\nabla f_{i,j}(x^*) - \nabla f_{i,j}(\wb^{t}_i)}^2\right)
		\\
		\overset{\eqref{eq:bnd_2}}{\le}&
		8L^2\norm{\bbx^{t+1} -x^*}^2 + \frac{8L^2}{m}\norm{\xb^{t+1} - \mathbf{1}\bbx^{t+1}}^2 + 4\Delta^{t+1}
		\\&
		+
		8L^2\norm{\bbx^t -x^*}^2 + \frac{8L^2}{m}\norm{\xb^t - \mathbf{1}\bbx^t}^2 + 4\Delta^t
		\\
		\overset{\eqref{eq:xxx}}{\le}&
		\left(8\rho^2 + 1\right)\frac{8L^2}{m}\norm{\xb^t - \mathbf{1}\bbx^t}^2 
		+
		\frac{64\rho^2\eta^2 L^2}{m}\norm{\bs^t - \mathbf{1}\bbs^t}^2
		\\&
		+
		8L^2\norm{\bbx^{t+1} -x^*}^2 +4\Delta^{t+1}
		+
		8L^2\norm{\bbx^t -x^*}^2 + 4\Delta^t.
	\end{align*}
	\fi
\end{proof}

\begin{proof}[Proof of Lemma \ref{lem:consensus_error}]
    For both \texttt{PMGT-SAGA} and \texttt{PMGT-LSVRG}, using the upper bound of $\norm{\vb^{t+1}-\vb^t}$ from Lemma \ref{lem:local_gradient_estimator}, we have
\begin{align*}
	&\EE\left[ \frac{\eta^2}{m} \norm{\bs^{t+1} - \mathbf{1}\bbs^{t+1}}^2\right]\\
	\le&
	2\rho^2\cdot\left(\left(64\rho^2\eta^2 L^2 + 1\right)\frac{\eta^2}{m}  \norm{\bs^t - \mathbf{1}\bbs^t}^2\right.
	\\
    &+(8\rho^2 +1)\frac{8L^2\eta^2}{m}\norm{\xb^t - \mathbf{1}\bbx^t}^2+8L^2\eta^2\norm{\bbx^{t+1} -x^*}^2 \\
    &\left.+4\eta^2\Delta^{t+1} + 8L^2\eta^2\norm{\bbx^t - x^*}^2+4\eta^2\Delta^t
	\right).
	\end{align*}
    Therefore, we obtain the linear system inequality as desired.
\end{proof}

\section{Proofs of Lemmas~\ref{lem:gradient_convergence},~\ref{lem:gradient_learning_quantity}, and~\ref{lem:V_dec}}
\begin{proof}[Proof of Lemma \ref{lem:gradient_convergence}]
        For \texttt{PMGT-SAGA}, it holds that 
    	\begin{align*}
		&\EE\left[\norm{\bbs_t - \nabla f(x^*)}^2\right]\\
		=&
		\EE \left[\norm{\frac{1}{m}\sum_i^m \left(\vb_i^t - \nabla f_i(x^*)\right)}^2\right]
		\\
		\le&
		\frac{1}{m}\sum_{i=1}^m \EE\left[\norm{\vb_i^t - \nabla f_i(x^*)}^2\right]
		\\
		\overset{\eqref{eq:v_var_dsaga}}{\le}&
		\frac{2}{mn}\sum_{i=1,j=1}^{m,n} 
		\left(\norm{\nabla f_{i,j}(\xb_i^t) - \nabla f_{i,j}(x^*)}^2\right.
		\\
		&\left.~~+
		\norm{\nabla f_{i,j}(x^*) - \nabla f_{i,j}(\phi_{i,j}^{t})}^2
		\right)
		\\
		\overset{\eqref{eq:bnd_1}}{\le}&
		8L \cdot D_f(\bbx^t, x^*)
		+
		2\Delta^t
		+
		\frac{4L^2}{m}\norm{\xb^t - \mathbf{1}\bbx^t}^2.
	\end{align*}
	For \texttt{PMGT-LSVRG}, we replace $\phi_{i,j}^{t}$ with $\wb_i^t$ and note that the definitions of gradient learning quantity $\Delta^t$ for two algorithms are different. This concludes the proof.
\end{proof}

\begin{proof}[Proof of Lemma \ref{lem:gradient_learning_quantity}]
    For \texttt{PMGT-SAGA}, it holds that
    \begin{equation*}
	\begin{aligned}
	&\EE\left[\Delta^{t+1}\right]\\
	=&
	\frac{1}{mn}\sum_{i=1,j=1}^{m,n} \EE\left[\norm{\nabla f_{i,j}(\phi_{i,j}^{t+1}) - \nabla f_{i,j}(x^*)}^2\right] 
	\\
	=&\frac{1}{mn}\sum_{i=1,j=1}^{m,n}
	\left(
	\frac{1}{n}\norm{\nabla f_{i,j}(\xb_i^t) - \nabla f_{i,j}(x^*)}^2 
	\right.
	\\
	&\left.~~+
	\frac{n-1}{n}\norm{\nabla f_{i,j}(\phi_{i,j}^{t}) - \nabla f_{i,j}(x^*)}^2
	\right)
	\\
	=&
	\left(1 - \frac{1}{n}\right)\Delta^t 
	+
	\frac{1}{mn^2} \sum_{i=1,j=1}^{m,n} \norm{\nabla f_{i,j}(\xb_i^t) - \nabla f_{i,j}(x^*)}^2
	\\
	\overset{\eqref{eq:bnd_1}}{\le}&
	\left(1 - \frac{1}{n}\right)\Delta^t 
	+
	\frac{4L}{n} D_f(\bbx^t, x^*) 
	+
	\frac{2L^2}{mn}\norm{\xb^t - \mathbf{1}\bbx^t}^2.
	\end{aligned}
	\end{equation*}

	For \texttt{PMGT-LSVRG}, it holds that
	\begin{align*}
	&\EE\left[\Delta^{t+1}\right]\\
	=&
    \frac{1}{m n} \sum_{i=1, j=1}^{m, n} \EE\left[\norm{\nabla f_{i, j}\left(\wb_i^{t+1}\right)-\nabla f_{i, j}\left(x^{*}\right)}^{2}\right]	\\
	=&
	(1-p)\Delta^{t} + \frac{p}{mn} \sum_{i=1, j=1}^{m, n} \EE\left[\left\|\nabla f_{i, j}\left(\xb_i^{t}\right)-\nabla f_{i, j}\left(x^{*}\right)\right\|^{2}\right]
	\\
	\overset{\eqref{eq:bnd_1}}{\le}&
	\left(1-p\right) \Delta^{t}+4 Lp D_{f}\left(\bar{x}^{t}, x^{*}\right)+\frac{2 L^{2}p}{m}\left\|\mathbf{x}^{t}-\mathbf{1} \bar{x}^{t}\right\|^{2}.
	\end{align*}
	By replacing $p$ with $\frac{1}{n}$, we conclude the proof.
\end{proof}

\begin{proof}[Proof of Lemma \ref{lem:V_dec}]
	Let all expectations be conditioned on $\xb^t$ in this proof. We first note that the induction inequalities of two algorithms are of the same form. Therefore, the following proof works for both algorithms. By definitions of $\bs^t$ and $\bbs^t$, we have
	\begin{equation}
	\label{eq:xx}
	\begin{aligned}
	&\norm{\bbx^{t+1} - x^*}^2
	\\
	=&
	\left\|\frac{1}{m} \mathbf{1}^\top \proximal_{m\eta,R}(\xb^t - \eta \bs^t) -\proximal_{\eta  ,r}(\bbx^t - \eta\bbs^t)\right.
	\\
	&\left.+\proximal_{\eta  ,r}(\bbx^t - \eta\bbs^t)- \proximal_{\eta  ,r}(x^* - \eta \nabla f(x^*))\right\|^2
	\\
	=&
	\norm{\frac{1}{m} \mathbf{1}^\top \proximal_{m\eta,R}(\xb^t - \eta \bs^t) -\proximal_{\eta  ,r}(\bbx^t - \eta\bbs^t)}^2
	\\
	&+ 
	\norm{\proximal_{\eta  ,r}(\bbx^t - \eta\bbs^t)- \proximal_{\eta  ,r}(x^* - \eta \nabla f(x^*))}^2
	\\
	&
	+2 \left\langle\proximal_{\eta  ,r}(\bbx^t - \eta\bbs^t)- \proximal_{\eta  ,r}(x^* - \eta \nabla f(x^*)),\right.
	\\
	&\left.~~~~\frac{1}{m} \mathbf{1}^\top \proximal_{m\eta,R}(\xb^t - \eta \bs^t) -\proximal_{\eta  ,r}(\bbx^t - \eta\bbs^t)\right\rangle
	\\
    \overset{\eqref{eq:prox_diff}}{\le}&
    \underbrace{\norm{\bbx^t - x^* - \eta(\bbs^t - \nabla f(x^*))}^2}_{T^2}\\ 
	&+
	\frac{2}{m}\norm{\xb^t - \mathbf{1}\bbx^t}^2
	+\frac{2\eta^2}{m}\|\mathbf{s}^t-\mathbf{1}\bar{\mathbf{s}}^t\|^2
	\\
	&
	+\frac{2}{\sqrt{m}}\underbrace{\norm{\bbx^t - x^* - \eta(\bbs^t - \nabla f(x^*))}}_T
	\\
	&\cdot(\norm{\xb^t - \mathbf{1}\bbx^t}+\eta\|\mathbf{s}^t-\mathbf{1}\bar{\mathbf{s}}^t\|).
	\end{aligned}
	\end{equation}
	Next, we will bound the variance of $T$:
	\begin{align*}
	&\EE\left[T^2\right]\\
	=&
	\EE\left[\norm{\bbx^t - x^*}^2 - 2\eta\dotprod{\bbx^t - x^*, \bbs^t - \nabla f(x^*)} \right.
	\\
	&\left.+ \eta^2 \norm{\bbs^t - \nabla f(x^*)}^2\right]
	\\
	=&
	\norm{\bbx^t - x^*}^2
	-
	2\eta\dotprod{\EE[\bbs_t] - \nabla f(x^*), \bbx_t - x^*}\\
	&+
	\eta^2\EE\left[\norm{\bbs^t - \nabla f(x^*)}^2\right]
	\\
	\le&
	\norm{\bbx^t - x^*}^2
	-
	2\eta\dotprod{\nabla f(\bbx^t) - \nabla f(x^*), \bbx^t - x^*}\\
	&+
	2\eta\norm{\nabla f(\bbx^t) - \EE[\bbs^t]}\norm{\bbx^t - x^*}\\
	&+
	\eta^2\EE\left[\norm{\bbs^t - \nabla f(x^*)}^2\right]
	\\
	\overset{\eqref{eq:str_cvx}}{\le}&
	(1 - \eta \mu ) \norm{\bbx^t - x^*}^2 - 2\eta D_f(\bbx^t, x^*)\\
	&+
	2\eta\norm{\nabla f(\bbx^t) - \EE[\bbs^t]}\norm{\bbx^t - x^*}
	\\&
	+
	\eta^2\EE\left[\norm{\bbs^t - \nabla f(x^*)}^2\right]
	\\
	\overset{\eqref{eq:Var_}}{\le}&
	(1 - \eta \mu ) \norm{\bbx^t - x^*}^2
	+2\eta(4L\eta - 1) \cdot D_f(\bbx^t, x^*)\\
	&+2\eta^2\Delta^t+2\eta\norm{\nabla f(\bbx^t) - \EE[\bbs^t]}\norm{\bbx^t - x^*}\\
	&+ \frac{4L^2\eta^2}{m} \norm{\xb^t - \mathbf{1}\bbx^t}^2
	\\
	\overset{\eqref{eq:var_s}}{\le}&
	(1 - \eta \mu ) \norm{\bbx^t - x^*}^2
	+2\eta(4L\eta - 1) \cdot D_f(\bbx^t, x^*)\\
	&+2\eta^2\Delta^t 
	+\frac{2\eta L}{\sqrt{m}} \norm{\xb^t - \mathbf{1}\bbx^t}\cdot\norm{\bbx^t - x^*}\\
	&+ \frac{4L^2\eta^2}{m} \norm{\xb^t - \mathbf{1}\bbx^t}^2.
	\end{align*}
	Using the fact $\norm{\bbx^t - x^*} \le \sqrt{V^t}$, we can obtain that
	\begin{equation}
	\label{eq:ET}
	\begin{aligned}
	&\EE\left[T^2\right]\\ 
	\le&
	(1 - \eta \mu ) \norm{\bbx^t - x^*}^2
	+2\eta(4L\eta - 1) \cdot D_f(\bbx^t, x^*)\\
	&+2\eta^2\Delta^t 
	+\frac{2\eta L\sqrt{V^t}}{\sqrt{m}}\norm{\xb^t - \mathbf{1}\bbx^t}\\
	&+ \frac{4L^2\eta^2}{m} \norm{\xb^t - \mathbf{1}\bbx^t}^2,
	\end{aligned}
	\end{equation}
and	
    \begin{equation}
    \label{eq:T2Delta}
	\begin{aligned}
	&\EE\left[T^2 + 4n\eta^2\Delta^{t+1}\right]
	\\
	\overset{\eqref{eq:ET},\eqref{eq:Delta}}{\le}&
	(1 - \eta \mu ) \norm{\bbx^t - x^*}^2
	+
	2\eta(12L\eta - 1) D_f(\bbx^t, x^*)\\
	&+
    \frac{12L^2\eta^2}{m} \norm{\xb^t - \mathbf{1}\bbx^t}^2	+\frac{2\eta L\sqrt{V^t}}{\sqrt{m}}\norm{\xb^t - \mathbf{1}\bbx^t}\\
	&+ \left(1 - \frac{1}{n} + \frac{1}{2n}\right) (4n\eta^2\Delta^t) 
	\\
	\le&
	\max\left(1-\frac{1}{12\kappa}, 1 - \frac{1}{2n}\right) \cdot V^t
	+\frac{\sqrt{V^t}}{6\sqrt{m}}\norm{\xb^t - \mathbf{1}\bbx^t}
	\\
	&+ \frac{1}{12m} \norm{\xb^t - \mathbf{1}\bbx^t}^2,
	\end{aligned}
	\end{equation}
	where the last inequality is because we set $\eta = 1/(12L)$.
	
	Combining with Eqn.~\eqref{eq:xx} and the definition of Lyapunov function $V^t$, we have
	\begin{align*}
	&\EE[V^{t+1}]\\ 
	=& 
	\EE\left[\norm{\bbx^{t+1} - x^*}^2+ 4n\eta^2\Delta^{t+1}\right]
	\\
	\overset{\eqref{eq:xx}}{\le}&
	\mathbb{E}\left[T^{2}+4 n \eta^{2} \Delta^{t+1}\right]\\
	&+\frac{2}{\sqrt{m}} \sqrt{\mathbb{E}\left[T^{2}\right]} \cdot(\eta\|\mathbf{s}^t-\mathbf{1}\bar{\mathbf{s}}^t\|+\|\mathbf{x}^{t}-\mathbf{1} \bar{x}^{t}\|)\\
	&+\frac{2}{m}\left\|\mathbf{x}^{t}-\mathbf{1} \bar{x}^{t}\right\|^{2}+\frac{2\eta^2}{m}\|\mathbf{s}^t-\mathbf{1}\bar{\mathbf{s}}^t\|^2
	\\
    \overset{\eqref{eq:T2Delta}}{\le}&
     \max \left(1-\frac{1}{12 \kappa}, 1-\frac{1}{2 n}\right) \cdot V^{t}+\frac{\sqrt{V^{t}}}{6 \sqrt{m}}\left\|\mathbf{x}^{t}-\mathbf{1} \bar{x}^{t}\right\|\\
    &+\frac{2}{\sqrt{m}} \sqrt{\mathbb{E}\left[T^{2}+4 n \eta^{2} \Delta^{t+1}\right]} \cdot(\eta\|\mathbf{s}^t-\mathbf{1}\bar{\mathbf{s}}^t\|+\|\mathbf{x}^{t}-\mathbf{1} \bar{x}^{t}\|) \\
    &+
    \frac{25}{12 m}\left\|\mathbf{x}^{t}-\mathbf{1} \bar{x}^{t}\right\|^{2}+\frac{2\eta^2}{m}\|\mathbf{s}^t-\mathbf{1}\bar{\mathbf{s}}^t\|^2
    \\
    \leq & \max \left(1-\frac{1}{12 \kappa}, 1-\frac{1}{2 n}\right) \cdot V^{t}+\frac{\sqrt{V^{t}}}{6 \sqrt{m}}\left\|\mathbf{x}^{t}-\mathbf{1} \bar{x}^{t}\right\|\\
    &+\frac{2}{\sqrt{m}} \sqrt{\frac{13}{12} V^{t}+\frac{1}{6 m}\left\|\mathbf{x}^{t}-\mathbf{1} \bar{x}^{t}\right\|^{2}}\\
    &\cdot(\left\|\mathbf{x}^{t}-\mathbf{1} \bar{x}^{t}\right\|+\eta\|\mathbf{s}^t-\mathbf{1}\bar{\mathbf{s}}^t\|)
    \\
    &+
    \frac{25}{12 m}\left\|\mathbf{x}^{t}-\mathbf{1} \bar{x}^{t}\right\|^{2}+\frac{2\eta^2}{m}\|\mathbf{s}^t-\mathbf{1}\bar{\mathbf{s}}^t\|^2
    \\
    \leq &  \max \left(1-\frac{1}{12 \kappa}, 1-\frac{1}{2 n}\right) \cdot V^{t}\\
    &+\frac{3\sqrt{V^{t}}}{ \sqrt{m}}(\left\|\mathbf{x}^{t}-\mathbf{1} \bar{x}^{t}\right\|+\left\|\mathbf{s}^{t}-\mathbf{1} \bar{s}^{t}\right\|)\\
    &+(\frac{25}{12m}+\frac{2}{\sqrt 6m})\left\|\mathbf{x}^{t}-\mathbf{1} \bar{x}^{t}\right\|^{2}+\frac{2\eta^2}{m}\|\mathbf{s}^t-\mathbf{1}\bar{\mathbf{s}}^t\|^2\\
    &+\frac{2\eta}{\sqrt 6 m}\|\mathbf{x}^{t}-\mathbf{1} \bar{x}^{t}\|\|\mathbf{s}^t-\mathbf{1}\bar{\mathbf{s}}^t\|
    \\ 
    \leq &  \max \left(1-\frac{1}{12 \kappa}, 1-\frac{1}{2 n}\right) \cdot V^{t}\\
    &+\frac{3\sqrt{V^{t}}}{ \sqrt{m}}(\left\|\mathbf{x}^{t}-\mathbf{1} \bar{x}^{t}\right\|+\left\|\mathbf{s}^{t}-\mathbf{1} \bar{s}^{t}\right\|)\\
    &+(\frac{25}{12m}+\frac{2}{\sqrt 6m}+\frac{1}{3m})\left\|\mathbf{x}^{t}-\mathbf{1} \bar{x}^{t}\right\|^{2}\\
    &+(\frac{2\eta^2}{m}+\frac{\eta^2}{2m})\|\mathbf{s}^t-\mathbf{1}\bar{\mathbf{s}}^t\|^2\\
    \leq & \max \left(1-\frac{1}{12 \kappa}, 1-\frac{1}{2 n}\right) \cdot V^{t}\\
    &+\frac{3\sqrt{V^t}}{\sqrt{m}}(\norm{\xb^t - \mathbf{1}\bbx^t}+\eta \norm{\bs^t-\mathbf{1}\bar{\bs}^t})\\
    &+\frac{3.3}{m}\left\|\mathbf{x}^{t}-\mathbf{1} \bar{x}^{t}\right\|^{2}+\frac{2.5\eta^2}{m}\|\mathbf{s}^t-\mathbf{1}\bar{\mathbf{s}}^t\|^2.
	\end{align*}
	The third inequality is because Eqn.~\eqref{eq:T2Delta} and $\frac{\sqrt{V^t}}{6\sqrt{m}}\norm{\xb^t - \mathbf{1}\bbx^t} \le \frac{1}{12}V^t+\frac{1}{12m}\norm{\xb^t-\mathbf{1}\bbx^t}^2$. The forth inequality is because we apply $\sqrt{a^2+b^2} \leq |a|+|b|$ to $\sqrt{\frac{13}{12} V^{t}+\frac{1}{6 m}\left\|\mathbf{x}^{t}-\mathbf{1} \bar{x}^{t}\right\|^{2}}$. The fifth inequality is because $2ab\le a^2 + b^2$.
\end{proof}
% use section* for acknowledgment
\ifCLASSOPTIONcompsoc

\end{document}